 \newtheorem{thm}{Theorem}[section]
 \newtheorem{lem}[thm]{Lemma}
 \theoremstyle{definition}
 \numberwithin{equation}{section}
\newtheorem{theorem}{Theorem}[section]
\theoremstyle{definition}
\theoremstyle{remark}
\newtheorem{remark}[theorem]{Remark}
\begin{document}
\title{Uniform Bound of the Highest-order Energy of the 2D Incompressible Elastodynamics}
\author{Yuan Cai
\footnote{Department of Mathematics, The Hong Kong University of Science and Technology, ClearWater Bay, Kowloon, Hong Kong. {
 Email: maycai@ust.hk}}
}
\date{}
\maketitle

\begin{abstract}
This paper concerns the time growth of the highest-order energy of the
 systems of incompressible isotropic Hookean elastodynamics in two space dimensions.
The global well-posedness of smooth solutions near equilibrium was first obtained
by Lei \cite{Lei16} where the highest-order
generalized energy may have certain growth in time.
We improve the result and show that the highest-order generalized energy is
uniformly bounded for all the time.
The two dimensional incompressible isotropic elastodynamics is a system of nonlocal quasilinear wave equations
where the unknowns decay as $\langle t\rangle^{-\frac12}$.
This suggests the problem is supercritical in the sense that the decay rate is far from integrable.
Surprisingly, we showed that in the highest-order energy estimate, the temporal decay can be strongly enhanced to be subcritical
$\langle t \rangle^{-\frac54}$.
The analysis is based on
the ghost weight energy method by Alinhac \cite{Alinhac01a}, the inherent strong null structure by Lei \cite{Lei16} and the inherent div-curl structure of the system.

\end{abstract}

\quad{ 2020 Mathematics Subject Classification: 35L72, 35Q74, 74B20.}

\maketitle





\section{Introduction}
The Sobolev norm growth is an important mathematical topic.
In continuum mechanics,
the growth of Sobolev norm quantifies the diffusion of energy to higher and higher frequency modes.
There are many well-known works on this subject. For instance,
Kislev-Sverak showed that the gradient of vorticity exhibits double exponential growth in time for all times
on the two dimensional Euler equations in a disk
 \cite{Sv}.
Colliander-Keel-Staffilani-Takaoka-Tao showed finite time $H^s$-norm ($0<s<1$) growth of the cubic defocusing nonlinear Schr\"odinger equation
on the two dimensional torus \cite{Tao}.
In this paper, we consider the time growth of the highest-order Sobolev norm of the global classical solutions to
the Cauchy problem in the two dimensional incompressible isotropic nonlinear elastodynamics.

For homogeneous, isotropic and hyperelastic materials, the motion of
the elastic fluids in Lagrangian coordinates is determined by the following Lagrangian
functional of flow maps:
\begin{align}\label{LaF}
\mathcal{L}(X; T, \Omega) =&
\int_0^T\int_{\Omega}\big(\frac{1}{2}|\partial_tX(t, y)|^2 -
W(\nabla X(t, y))\\\nonumber & +\ p(t, y)\big[\det\big(\nabla X(t,
y)\big) - 1\big]\big)dydt.
\end{align}
Here  $X(t, y)$ is the flow map,  $W \in C^\infty(GL_2, \mathbb{R}_+)$ is the strain energy
function which depends only on the deformation tensor $\nabla X$,
pressure $p(t, y)$ is a Lagrangian
multiplier which is used to force the flow maps to be
incompressible.
Without loss of generality, we will only study 
 the typical Hookean case where the strain energy functional is given by
\begin{equation*}
W(\nabla X) = \frac{1}{2}|\nabla X|^2.
\end{equation*}
In the Hookean case, the Euler-Lagrangian equation of \eqref{LaF} takes
\begin{equation} \label{Elasticity}
(\partial_t^2 - \Delta) X +(\nabla X)^{-\top}\nabla p=0.
\end{equation}
On the other hand, the incompressibility is equivalent to say that the material density is constant within a fluid parcel. For simplicity, we take the density to be 1.
For any given smooth flow map $X(t, y)$,
there holds the following conservation law of mass:
\begin{equation}\nonumber
\int_{\Omega}dy = \int_{\Omega_t}dX,\quad \Omega_t = \{X(t, y)|y \in
\Omega\}
\end{equation}
for any smooth bounded connected domain $\Omega$.
Clearly, the above equation will imply that
\begin{equation}\label{inc}
\det(\nabla X) = 1.
\end{equation}

The global existence of classical solutions to the Cauchy problem of \eqref{Elasticity}-\eqref{inc}
for small initial displacements was proved by Lei \cite{Lei16}.
By introducing the concept of strong null condition and
observing that the incompressible isotropic elastodynamics inherently satisfies such condition,
Lei proved the global existence of this system where the highest-order energy may have certain growth in time.
Actually, Alinhac conjectured that by analogy with similar problems where such a growth has been proved, that this time growth of the highest-order energy is a true phenomenon \cite{Alinhac10}.
However, we can show that the highest-order energy of the two dimensional incompressible
isotropic elastodynamics is uniformly bounded in all the time.
The two dimensional isotropic incompressible elastodynamics is supercritical in the sense
that the $\langle t \rangle^{-\frac 12}$ temporal decay rate of the unknowns is not integrable.
Even if the system satisfies the strong null condition and using the ghost weight technique,
one can obtain enhanced critical decay rate $\langle t \rangle^{-1}$ in time in the highest-order energy estimate, see \cite{Lei16, LSZ13}.
It's truly surprising that we can finally achieve subcritical  $\langle t \rangle^{-\frac54}$ temporal decay
in the highest-order energy estimate and thus imply the uniform bound of the highest-order energy in all time.



\subsection{A review of related results}
The incompressible elastodynamics is a system of nonlocal quasilinear wave equations.
Let us first review some closely related historical works on quasilinear wave
type equations. For quasilinear wave equations in dimension three, and for small
initial data, John and Klainerman \cite{JohnKlainerman84} obtained the
first non-trivial almost global existence result.
When the spatial dimensions are not bigger than three, the global existence would depend
on two basic assumptions: the initial data should be sufficiently small and the
nonlinearities satisfy a type of null condition \cite{Sideris00}. For nonlinear wave
equations with sufficiently small initial data and the null condition is not satisfied,
the finite time blow-up was shown by John \cite{John81}, Alinhac \cite{Alinhac00b} in 3D, and
by Alinhac \cite{Alinhac99a,Alinhac99b, Alinhac01b} in the two dimensional case.
Under the null condition, the  fundamental works on global solutions for three
dimensional scalar wave equation were obtained by Klainerman \cite{Klainerman86} and
Christodoulou \cite{Christodoulou86} independently. In the more difficult two dimensional case, the global solutions were
obtained by Alinhac \cite{Alinhac01a} under the null condition.

The study of   dynamics of  isotropic, hyperelastic and homogeneous materials has a long history.
For three dimensional compressible elastodynamic  systems  commonly referred as elastic waves in literature,
 John \cite{John88} showed the existence of almost global solutions
for small displacements, see also \cite{KS96} for another proof.
On the other hand, John \cite{John84} proved that a genuine nonlinearity condition leads to
the formations of finite time singularities for spherically symmetric, arbitrarily small but
non-trivial displacements. We refer to \cite{T98} for large displacement singularities. When the
genuine nonlinearity condition is excluded,
Sideris \cite{Sideris96, Sideris00} and also Agemi \cite{Agemi00} showed the global existence under a nonresonance condition which is physically consistent with the system.
With an additional repulsive Poisson term, a global existence was established in \cite{HM17} which
allows a general form for the pressure in both two and three dimensional case.
For the incompressible elastodynamics, the only waves presented in the isotropic systems are shear waves which are linearly degenerate. The global-well-posedness was obtained by Sideris and Thomases in \cite{ST05,
ST07}. We also refer to \cite{ST06} for a  unified treatment, and \cite{LW} for some
improvement on the uniform time-independent bounds on the highest-order energy.

In  the two dimensional case, the proof of long time existence  for the
elastodynamics is more difficult due to the weaker temporal decay.
The first long time existence result is due to Lei, Zhou and Sideris \cite{LSZ13} where the
authors showed the almost global existence
in Eulerian coordinates. By introducing the concept of strong null condition
and observing that such condition is intrinsically satisfied for the system in Lagrangian coordinates,
Lei \cite{Lei16} proved  the global well-posedness by using the energy
method of Klainerman and Alinhac's ghost weight approach. Afterwards,
Wang \cite{W17} gave another proof of this latter result using
space-time resonance method \cite{GMS} and a normal form transformation.

Now let us mention some related works on viscoelasticity,
where the viscosity is present in the momentum equation.
The global well-posedness
near equilibrium was first obtained in \cite{LLZ2005} for the two dimensional
case,
see also \cite{LeiS,LeiLZ08}. This method works
in both two and three dimensional  cases. Lei and Zhou \cite{LZ05} obtained similar results by working
directly on the equations for the deformation tensor through an incompressible limit
process. For many related discussions we refer to  \cite{LeiLZ08}, \cite{CZ2006},
\cite{HX2010,HL16,HW2011,Lei10,Lei14,Lei3,Lin,LZ2008, 
QZ2010,ZF} and the references therein.
In all of these works, the initial data is restricted by the viscosity parameter.
Kessenich \cite{Kessenich} established the global
well-posedness for three dimensional incompressible viscoelastic materials
uniformly in the viscosity and in time. The more difficult two dimensional case was obtained
by the author etc. in \cite{CLLM}.

\subsection{Notations, main theorem and ideas}\label{Equations}

Now let us introduce some notations, then present the main theorem.
At last we will discuss the key ideas of the paper.


Denote
\begin{equation*} 
X(t, y) = y + Y(t, y).
\end{equation*}
Then the Hookean incompressible isotropic elastodynamics \eqref{Elasticity} can be written as follows:
\begin{equation}\label{Elasticity-1}
\partial_t^2Y - \Delta Y = -\nabla p- (\nabla Y)^{ \top} (\partial_t^2 - \Delta) Y .
\end{equation}
The constraint \eqref{inc} becomes
\begin{equation}\label{Struc-1}
\nabla\cdot Y = - \det(\nabla Y)
=-\frac12 \nabla_i Y_j\nabla_i^\perp Y_j^\perp.
\end{equation}
The summation convention over repeated indices will always be assumed
through the whole paper.
Here and what follows, we use the following convention
$$(\nabla Y)_{ij} = \frac{\partial Y^i}{\partial y^j}$$
and the following notations
\begin{equation}\nonumber
\omega = \frac{y}{r},\quad r = |y|,
\quad \omega^\perp =\begin{pmatrix}- \omega_2, \omega_1\end{pmatrix},
\quad Y^\perp =\begin{pmatrix}- Y^2, Y^1\end{pmatrix},
\quad \nabla^\perp =\begin{pmatrix}- \partial_2, \partial_1\end{pmatrix}.
\end{equation}

We will use the vector field approach and the generalized energy method of Klainerman \cite{Klainerman85}.
The rotational operator $\Omega$ and the scaling operator $S$  are defined as follows:
\begin{equation*}
\Omega=\partial_\theta=x^\perp\cdot \nabla,
\quad S=t\partial_t+r\partial_r.
\end{equation*}
In the application, we usually  use the the modified rotational operator and modified scaling operator.
For any  vector $Y$ and scalar $p$, we make the following
conventions:
\begin{equation}\nonumber
\begin{cases}
\widetilde{\Omega} Y \triangleq \partial_\theta Y + AY, \quad
\widetilde{\Omega} p
\triangleq \partial_\theta p,\quad \widetilde{\Omega} Y^j = (\widetilde{\Omega} Y)^j,\\[-4mm]\\
\widetilde{S} Y \triangleq (S - 1) Y, \quad \widetilde{S} p
\triangleq S p,\quad \widetilde{S} Y^j = (\widetilde{S} Y)^j,
\end{cases}
\end{equation}
where $A$ is a matrix  defined as follows:
\begin{equation*}
A = \begin{pmatrix}0 & - 1\\ 1 & 0\end{pmatrix}.
\end{equation*}
Let $\Gamma$ be any operator of the set
$\{\partial_t, \partial_1, \partial_2, \widetilde{\Omega},
\widetilde{S}\}$,
$\Gamma^\alpha=\Gamma_1^{\alpha_1}\Gamma_2^{\alpha_2}\Gamma_3^{\alpha_3}\Gamma_4^{\alpha_4}\Gamma_5^{\alpha_5}$
 for any multi-index $\alpha = \{\alpha_1, \alpha_2, \alpha_3,
\alpha_4, \alpha_5\}^\top \in \mathbb{N}^5$.
Then for any $\alpha\in \mathbb{N}^5$, we have the following commutation properties when applying $\Gamma^\alpha$ to \eqref{Elasticity-1} and \eqref{Struc-1}:
\begin{equation}\label{Elasticity-D}
\partial_t^2\Gamma^\alpha Y - \Delta \Gamma^\alpha Y = -\nabla\Gamma^\alpha p
- \sum_{\beta + \gamma= \alpha}C_\alpha^\beta(\nabla
  \Gamma^\beta Y)^{\top}(\partial_t^2 - \Delta)\Gamma^\gamma Y,
\end{equation}
together with the constraint
\begin{align}\label{Struc-2}
\nabla\cdot \Gamma^\alpha Y
&= \sum_{\beta + \gamma =
\alpha}C_\alpha^\beta\big(\partial_1\Gamma^\beta Y^2\partial_2\Gamma^\gamma Y^1 -
  \partial_1\Gamma^\gamma Y^1\partial_2\Gamma^\beta Y^2\big)\\\nonumber
&=\sum_{\beta + \gamma =\alpha}C_\alpha^\beta(\nabla^\perp\Gamma^\beta Y^2\cdot \nabla\Gamma^\gamma Y^1)\\\nonumber
&=-\frac12 \sum_{\beta + \gamma =\alpha}C_\alpha^\beta \nabla_i\Gamma^\beta Y_j\nabla_i^\perp\Gamma^\gamma Y_j^\perp.
\end{align}
Readers can refer to Section 2 in \cite{Lei16} for the derivation of \eqref{Elasticity-D} and \eqref{Struc-2}.
These two equations are the starting point of all the subsequent estimates.

Throughout this paper, we use $\partial$ for space-time
derivatives:
$$\partial = (\partial_t, \partial_1,
\partial_2).$$
Using $\langle a\rangle$ to denote
$$\langle a\rangle = \sqrt{1 + a^2}$$
and $[a]$ to denote the biggest integer which is no more than $a$:
$$[a] = {\rm the\ biggest\ integer\ which\ is\ no\ more\ than\ a}.$$
We often use the following abbreviations:
$$\|\Gamma^{\leq|\alpha|}f\| = \sum_{|\beta| \leq |\alpha|}\|\Gamma^\beta f\|.$$
We need to use Klainerman's generalized energy which is defined by
\begin{equation}\nonumber
\mathcal{E}_{\kappa} = \sum_{|\alpha| \leq \kappa -
1}\|\partial\Gamma^\alpha Y\|_{L^2}^2.
\end{equation}
We define the following weighted $L^2$ generalized energy which is a combination
of Lei in \cite{Lei16} and of Klainerman-Sideris in \cite{KS96}:
\begin{equation*} 
\mathcal{X}_{\kappa} = \sum_{|\alpha| \leq \kappa - 2}\Big(
\int_{r\leq 2\langle t \rangle}\langle t - r\rangle^2 |\partial^2\Gamma^\alpha  Y|^2dy
+ \int_{r > 2\langle t \rangle}\langle t \rangle^2 |\partial^2\Gamma^\alpha Y|^2dy
+\int_{\mathbb{R}^2}\langle t - r\rangle^2 |\partial\nabla\Gamma^\alpha  Y|^2dy
\Big).
\end{equation*}
In order to treat various singular integral with $\langle r\rangle $ or $\langle r-t\rangle$ factor,
we introduce the weighted $L^2$ norm with zero-order differential operator:
\begin{equation*} 
\mathcal{Z}_{\kappa} = \sum_{|\alpha| \leq \kappa - 2}\Big(\int_{\mathbb{R}^2} \langle t - r\rangle^2 |\partial\nabla P(\nabla)\Gamma^\alpha
Y|^2dy \Big),
\end{equation*}
where $P(\nabla)$ is zero-order differential operator with symbol $P(\xi)$.

To describe the space of the initial data, we introduce
\begin{equation}\nonumber
\Lambda = \{\nabla, r\partial_r, \Omega\},
\end{equation}
and
\begin{equation}\nonumber
H^\kappa_\Lambda =\big\{(f, g)\big| \sum_{|\alpha| \leq \kappa -
1}\big(\|\Lambda^\alpha f\|_{L^2} + \|\nabla\Lambda^\alpha f\|_{L^2}
+ \|\Lambda^\alpha g\|_{L^2}\big)<\infty\big\}
\end{equation}
for $\alpha \in\mathbb{N}^4$.
Then we define
\begin{eqnarray}\nonumber
&&H^\kappa_\Gamma(T) = \big\{Y: [0,T) \to
  \mathbb{R}^2\big|\Gamma^\alpha Y \in L^\infty([0,T);
  L^2(\mathbb{R}^2)),\\[-4mm]\nonumber\\\nonumber
&&\quad \quad \quad\ \ \ \ \ \partial_t\Gamma^\alpha Y,
\nabla\Gamma^\alpha
  Y \in L^\infty([0,T); L^2(\mathbb{R}^2)),\ \ \forall\ |\alpha| \leq \kappa - 1\big\}
\end{eqnarray}
with the norm
\begin{equation}\nonumber
\sup_{0 \leq t < T} \mathcal{E}_\kappa^{1/2}(Y).
\end{equation}

Now we are ready to state the main theorem of this paper as follows:
\begin{thm}\label{GlobalW}
Let $W(F) = \frac{1}{2}|F|^2$ be an isotropic Hookean strain energy
function. Let
$(Y_0, v_0) \in H^{\kappa}_\Lambda$ with $\kappa \ge
7$. Suppose that $Y_0$ satisfies the structural constraint
$\eqref{Struc-1}$ and
\begin{equation}\nonumber
\mathcal{E}_{\kappa}^{\frac{1}{2}}(0) = \sum_{|\alpha| \leq \kappa
- 1}\big(
  \|\nabla\Lambda^{\alpha}Y_0\|_{L^2} +
  \|\Lambda^{\alpha}v_0\|_{L^2}\big) \leq
  \epsilon.
\end{equation}
Then there exists a positive constant $\epsilon_0$ and $C_0$,
depending only on $\kappa$, such that if $\epsilon\leq \epsilon_0$, the system of incompressible
isotropic Hookean
elastodynamics \eqref{Elasticity-1} with following initial data
$$Y(0, y) = Y_0(y),\quad \partial_tY(0, y) = v_0(y)$$ has a unique
global classical solution satisfying
\begin{equation}\nonumber
\mathcal{E}_{\kappa }^{\frac{1}{2}}(t) \leq C_0 \epsilon
\end{equation}
uniformly for all $t\in [0,+\infty)$.
\end{thm}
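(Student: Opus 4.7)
The plan is a continuity/bootstrap argument. I would assume a priori on $[0,T]$ that $\mathcal{E}_\kappa(t)^{1/2} \le C_0 \epsilon$ together with slow (at most logarithmic or tiny polynomial) growth of the weighted companions $\mathcal{X}_\kappa(t)^{1/2}$ and $\mathcal{Z}_\kappa(t)^{1/2}$, and aim to establish a differential inequality of the form $\frac{d}{dt}\mathcal{E}_\kappa(t)\lesssim \epsilon\,\langle t\rangle^{-5/4}\mathcal{E}_\kappa(t)$, strictly improving the highest-order bound.

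The first block of estimates is the usual pointwise decay of low-order quantities. Klainerman-Sobolev in two dimensions gives the baseline $|\partial \Gamma^{\le \kappa/2} Y(t,y)| \lesssim \epsilon\langle t+r\rangle^{-1/2}\langle t-r\rangle^{-1/2}$, while a Klainerman-Sideris weighted inequality converts $\mathcal{X}_\kappa$ into additional $\langle t-r\rangle$ weight for the bad-cone components. Here the div-curl constraint \eqref{Struc-2} is essential at every order: it expresses $\nabla\cdot\Gamma^\alpha Y$ as a quadratic form in $\nabla\Gamma Y$, so longitudinal parts are genuinely $O(\epsilon^{2})$ and, pointwise, behave like $\langle t+r\rangle^{-1}$ rather than $\langle t+r\rangle^{-1/2}$, upgrading every good-direction estimate. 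Combined with the strong null structure of \cite{Lei16}, this is what lifts the decay of the critical bilinear forms from the borderline into the integrable regime.

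The core step is an Alinhac ghost-weight energy identity applied to the commuted system \eqref{Elasticity-D} at $|\alpha|=\kappa-1$: testing against $e^{-q(r-t)}\partial_t\Gamma^\alpha Y$ with $q(s)=\int_{-\infty}^{s}\langle\tau\rangle^{-1-\mu}d\tau$ yields the energy identity plus a coercive integral of $e^{-q}\langle t-r\rangle^{-1-\mu}|(\partial_t-\partial_r)\Gamma^\alpha Y|^{2}$. The pressure term, via integration by parts and \eqref{Struc-2}, reduces to a cubic remainder. The quasilinear piece splits into a low-low term (automatically summable), a low-high term handled by pointwise decay, and the dangerous top-order term $(\nabla\Gamma^\beta Y)^\top(\partial_t^2-\Delta)\Gamma^\alpha Y$ with small $|\beta|$; the subcritical gain of $\langle t\rangle^{-1/4}$ beyond the ghost-weight baseline $\langle t\rangle^{-1}$ has to be wrung out of this last piece. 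I would substitute $(\partial_t^2-\Delta)\Gamma^\alpha Y$ by its expression from \eqref{Elasticity-1} as a pressure gradient plus truly cubic remainders, then integrate by parts so that each dangerous derivative lands either on a factor controlled by \eqref{Struc-2} (extra quadratic smallness and faster spatial decay), on the ghost weight (providing a $\langle t-r\rangle^{-1/2}$ factor pairing with the coercive good-direction integral), or on the pressure through the zero-order operator $\nabla\Delta^{-1}$ (handled by $\mathcal{Z}_\kappa$). The strong null condition ensures the worst bilinear forms only see good-direction or transverse-transverse interactions, so a weighted Cauchy-Schwarz against the Hardy weight from $\mathcal{X}_\kappa$ can then produce the missing $\langle t-r\rangle^{-1/4}$ factor. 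Designing the chain of integrations by parts so that none of these three resources is wasted, and so that the cross-terms remain integrable, is the step I expect to be the main technical obstacle.

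Once the differential inequality above is in hand, Gr\"onwall gives $\mathcal{E}_\kappa(t)\le C\mathcal{E}_\kappa(0)\le C\epsilon^{2}$ uniformly in $t$, with constant strictly smaller than $C_0^{2}\epsilon^{2}$ for $\epsilon$ small enough; the auxiliary norms $\mathcal{X}_\kappa$ and $\mathcal{Z}_\kappa$ are closed by the standard Klainerman-Sideris estimates applied to the commuted system and fed by the improved energy bound. This closes the bootstrap on $[0,T]$, and continuation yields the global uniform bound of Theorem \ref{GlobalW}.
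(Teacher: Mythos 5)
Your proposal assembles the right list of ingredients (ghost weight, strong null condition, the div--curl constraint \eqref{Struc-2}, the weighted norms $\mathcal{X}_\kappa$, $\mathcal{Z}_\kappa$), but the step you yourself flag as ``the main technical obstacle'' is exactly where the content of the theorem lives, and the paper's two concrete devices for it are absent from your sketch. First, the bottleneck is not only the top-order quasilinear term but also the lower-order one $\mathcal{E}_\kappa\|(\partial_t^2-\Delta)\Gamma^{\le\kappa-4}Y\|_{L^\infty}$: the paper gets $\langle t\rangle^{-5/4}$ there by proving a new enhanced decay estimate $\langle t\rangle^{3/2}\|\nabla(\partial_t^2-\Delta)\Gamma^{\le\kappa-3}Y\|_{L^2}\lesssim\mathcal{E}_\kappa$ (Lemma \ref{Box3}) and interpolating it in two dimensions against $\langle t\rangle\|(\partial_t^2-\Delta)\Gamma^{\le\kappa-2}Y\|_{L^2}\lesssim\mathcal{E}_\kappa$ to obtain $\|(\partial_t^2-\Delta)\Gamma^{\le\kappa-4}Y\|_{L^\infty}\lesssim\langle t\rangle^{-5/4}\mathcal{E}_\kappa$. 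Nothing in your outline produces this $L^\infty$ gain; a ``weighted Cauchy--Schwarz against the Hardy weight from $\mathcal{X}_\kappa$'' does not supply the extra quarter power, since $\mathcal{X}_\kappa$ only yields the critical $\langle t\rangle^{-1}$ rate already available in \cite{Lei16}.

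Second, your energy identity tests the unprojected equation against $e^{-q}\partial_t\Gamma^\alpha Y$ and asserts that the pressure ``reduces to a cubic remainder'' after integration by parts. At top order this is not innocuous: the terms produced by differentiating the ghost weight and by the constraint involve $\Gamma^\alpha p$ itself (not $\nabla\Gamma^\alpha p$), which is not controlled by Lemma \ref{SN-1}. The paper avoids this entirely by first applying $\nabla(-\Delta)^{-1}\nabla^\perp\cdot$ to \eqref{Elasticity-D}, which kills the pressure but, as a consequence, only delivers the ghost-weight coercive term for the \emph{curl part} of $\Gamma^\alpha Y$. This is precisely what forces the paper's key maneuver for the top-order null form $\Pi_1$: a div--curl decomposition of \emph{each factor}, so that every term containing a divergence part becomes effectively cubic with improved decay (Lemma \ref{DivE}), while the remaining pure curl--curl term exhibits a good derivative of the curl part that the partial ghost weight can absorb. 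Your sketch gestures at using \eqref{Struc-2} for ``longitudinal parts'' but does not perform this factor-by-factor decomposition, nor does it reconcile the full-strength ghost weight you assume with a pressure contribution you have not actually controlled. As written, the proposal is a plausible road map whose decisive steps remain unproved.
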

\begin{remark}
The uniform bound suggests the highest-order energy  $\mathcal{E}_{\kappa }$  will not cascade from low frequency to the high frequency.
\end{remark}
\begin{remark}
The uniform bound of the highest-order energy is proved in Lagrangian coordinates.
Whether one can prove the result directly working under Eulerian coordinate is not clear.
\end{remark}
\begin{remark}
In what follows, we will show the {\it{a priori}} estimate
\begin{equation}\label{apriori}
\mathcal{E}_{\kappa}^\prime(t) \leq C_0\langle
t\rangle^{-\frac54}\mathcal{E}_{\kappa}^{2}(t).
\end{equation}
Actually, we can work harder to obtain
\begin{equation}\nonumber
\mathcal{E}_{\kappa}^\prime(t) \leq C_0\langle
t\rangle^{-\frac32}\mathcal{E}_{\kappa}^{2}(t).
\end{equation}
 The conclusion is the same:
uniform bound of the highest-order energy.
Hence we will not try to achieve this optimal temporal decay.

\end{remark}

The main strategy of the proof is as follows: For initial data
satisfying the constraints in Theorem \ref{GlobalW}, we will prove \eqref{apriori}
which is given in the last section, for some absolute
positive constant $C_0$ depending only on $\kappa$.  Once the above
differential inequality is proved, it is easy to show that the
bound for  $\mathcal{E}_{\kappa}$ given in the theorem can never
be reached, which yields the global existence result and the uniform bound of the highest-order
energy $\mathcal{E}_{\kappa}$, thus
completes the proof of Theorem \ref{GlobalW}.


So from now on our main goal will be to show the  \textit{a
priori} differential inequality \eqref{apriori}.
Since $\epsilon_0$ can be arbitrarily small, we will always assume that
$\mathcal{E}_{\kappa }\leq \delta \ll 1$ in the remaining part of
this paper.

\subsection{Key ideas}
 To best illustrate our ideas and to
make the presentation as simple as it could be, we will only focus on
the typical Hookean case. 
Following from Lei \cite{Lei16},
we first estimate $\| \nabla\Gamma^\alpha p \|_{L^2}$, 
$\|(\partial_t^2-\Delta) \Gamma^\alpha Y \|_{L^2}$
and there holds (see Lemma \ref{SN-1})
\begin{equation}\label{test}
\|\nabla\Gamma^\alpha p\|_{L^2} + \|(\partial_t^2 -
\Delta)\Gamma^\alpha Y\|_{L^2} \lesssim \Pi(\alpha, 2).
\end{equation}
Here
$\Pi(\alpha, 2)$ represents some quadratic nonlinearities.
In \cite{Lei16}, Lei introduced the concept of strong null condition  and
observed that 
$\Pi(\alpha, 2)$ satisfies such condition
which will strongly enhance the temporal decay.
We also refer to \cite{CL, CLM, CLLM} for more discussions and applications of strong null condition.
On the other hand, the quantities on the left hand side of \eqref{test} involve
$(|\alpha| + 2)$-th order derivative of unknowns.
Lei \cite{Lei16} was able to organize such that the right hand side of \eqref{test}
involve only $(|\alpha| + 1)$-th order derivative of unknowns at the price of slower temporal decay.
%
%
 This helps to save one derivative in the highest order energy estimate.
By $\eqref{Struc-1}$, the divergence of the unknowns is quadratic nonlinearities.
Hence the generalized energy is determined by its curl part. Thus in the energy estimate, we can apply curl operator which will delete the pressure.
All these enable us to achieve $\langle t\rangle^{-1}$ critical decay in time in the highest-order energy estimate.

Now let us discuss some more key observations to further improve the temporal decay.
In the highest-order energy estimate, for the lower order derivative nonlinear terms which involve no derivative loss,
we need enhanced temporal decay estimate for the D'Alembertian with one derivative.
By the inherent strong null condition and
inherent div-curl structure in the estimate of D'Alembertian,
this is expected and can be achieved by a bootstrap argument.
For the estimate of the highest order derivative nonlinear terms, we need to use the derivative saving version of \eqref{test}
at the price of slower temporal decay.
To salvage the temporal decay back, 
 we need to take advantage of the ghost weight energy method by Alinhac \cite{Alinhac01a}. However, due to the application of curl operator in the energy estimate,
what we have in hand is only the ghost weight energy for the curl part of the unknowns.
(We refer to Section 5.3 for the details of the argument.)
Hence a direct use of this ghost weight energy is not feasible though the null condition is satisfied.
To deal with this difficulty,
we observe that the divergence of the unknowns is quadratic terms which will further improve
the temporal decay. What makes trouble is 
the curl part
of the unknowns.
%
Hence we use the div-curl decomposition on each unknowns in the nonlinearities.
For the nonlinearities containing divergence part, the cubic nonlinearities will be improved to be quartic or higher nonlinearities. For those containing only curl part, we use the null condition and the ghost weight energy of the curl part to improve the temporal decay. Thus we could obtain subcritical temporal decay in the highest-order energy estimate.

Due to the application of div-curl decomposition,
we need to deal with the commutators of generalized derivatives and Riesz transform.
We also need to treat various singular integral with $\langle r\rangle$ or $\langle r-t\rangle$ factor.
However, $\langle r\rangle^2 $ is not in $\mathcal{A}_p$ class of a zero-order differential operator for $p = 2$ in two space dimension.
To overcome this difficulty,
besides  using the weighted $L^2$ norm of Klainerman-Sideris \cite{KS96} and modified one of Lei \cite{Lei16},
we introduce the weighted $L^2$ norm with zero-order differential operators $\mathcal{Z}_\kappa$.
The trick here is that the second derivative unknowns with  $\langle r-t\rangle$-factor will be controlled by D'Alembertian
with $\langle t\rangle$-factor. Then the weighted $L^2$ estimate of the singular integral can pass through.

The remaining part of this paper is organized as follows:
In Section \ref{lemmas} we record
some weighted Sobolev imbedding inequalities.
We also give the estimate for good derivatives and lay down a preliminary
step for estimating weighted generalized $L^2$ energy.
Then the commutator of generalized derivative and Riesz transform will be computed.
Next we estimate the inherent strong null form in Section \ref{WE},
at the end of that section we give the estimate for the weighted
$L^2$ energy.
Section 4 is devoted to the first derivative estimate of D'Alerbertian.
 In last section, we present the highest-order generalized energy estimate.

\section{Preliminaries}\label{lemmas}

In this section, we first present several weighted $L^\infty-L^2$  estimates. They will be used to prove the decay of solutions in $L^\infty$ norm.
Next we state the estimate for the weighted  $L^2$ norm $\mathcal{X}_{\kappa}$
and the estimate for the good derivatives $\omega\partial_t+\nabla$.
At last, the commutator of the generalized operators and the zero-order differential operator will be studied.


The following lemma gives the decay properties of $L^\infty$
norm of derivative of unknowns. It shows that the $L^\infty$ norm of
derivative of unknowns will decay in time  as $\langle
t\rangle^{- \frac{1}{2}}$.
For its proof, we refer to Lemma 3.1 in \cite{Lei16}.
\begin{lem}\label{DecayEF}
Let $t \geq 4$. Then there holds
\begin{equation}\label{C-1}
\langle r \rangle^{\frac{1}{2}}|\partial\Gamma^\alpha Y| \lesssim
\mathcal{E}_{|\alpha| + 3}^{\frac{1}{2}}.
\end{equation}
Moreover, for $r \leq 2\langle t\rangle /3$, or $r \geq 5\langle
t\rangle/4$, there holds
\begin{equation*} 
\langle t\rangle |\partial\Gamma^\alpha Y| \lesssim
\big(\mathcal{E}_{|\alpha| + 1}^{\frac{1}{2}} +
\mathcal{X}_{|\alpha| + 3}^{\frac{1}{2}}\big)\ln^{\frac{1}{2}}\big(e
+ t\big).
\end{equation*}
For $\langle t\rangle/3 \leq r \leq 5\langle t\rangle/2$, there
holds
\begin{equation*} 
\langle r \rangle^{\frac{1}{2}} \langle t-r\rangle^{\frac{1}{2}}
|\partial\Gamma^\alpha Y| \lesssim \mathcal{E}_{|\alpha| + 2}^{\frac{1}{2}}
+ \mathcal{X}_{|\alpha| + 3}^{\frac{1}{2}}.
\end{equation*}
\end{lem}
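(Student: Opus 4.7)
The plan is to prove each of the three pointwise bounds by classical Klainerman-Sobolev arguments in two dimensions, combined with the weights carried by $\mathcal{X}_\kappa$. The core building block is the one-dimensional angular Sobolev embedding $H^1(\bS^1)\hookrightarrow L^\infty(\bS^1)$, which on each circle of radius $r$ yields
\begin{equation*}
|u(r,\theta)|^2 \lesssim \int_0^{2\pi}\bigl(|u|^2 + |\Omega u|^2\bigr)(r,\theta')\,d\theta',
\end{equation*}
coupled with the radial-integration identity $|u(r,\theta)|^2 = |u(r_0,\theta)|^2 + 2\int_{r_0}^{r}u\,\partial_{r'}u\,dr'$. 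The commutators $[\partial,\widetilde{\Omega}]$ and $[\partial,\widetilde{S}]$ are first-order in $\Gamma$, so they are absorbed into the generalized-derivative indexing at the cost of at most one extra level.

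For the first bound, set $u=\partial\Gamma^\alpha Y$, send $r_0\to\infty$, and apply Cauchy-Schwarz in the polar form $dy = r\,d\theta\,dr$. This produces the prefactor $r^{-1}$ together with $L^2$ factors of the form $\|\partial\Gamma^{\le|\alpha|+2}Y\|_{L^2}$, each bounded by $\mathcal{E}_{|\alpha|+3}^{1/2}$; for $r\lesssim 1$ the claim reduces to the standard two-dimensional Sobolev embedding $W^{2,2}\hookrightarrow L^\infty$ on a fixed ball, where $\langle r\rangle^{1/2}\sim 1$. This gives $\langle r\rangle^{1/2}|\partial\Gamma^\alpha Y|\lesssim \mathcal{E}_{|\alpha|+3}^{1/2}$.

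For the second bound, restrict attention to the off-cone regions $\{r\le 2\langle t\rangle/3\}$ and $\{r\ge 5\langle t\rangle/4\}$, where $\langle t-r\rangle\sim\langle t\rangle$. In these regions $\mathcal{X}_{|\alpha|+3}^{1/2}$ directly controls $\|\langle t\rangle\,\partial^2\Gamma^{\le|\alpha|+1}Y\|_{L^2}$. Apply the angular embedding to $\partial\Gamma^\alpha Y$ and carry out the radial integration over a dyadic annulus contained in the chosen region; Cauchy-Schwarz pairs the unweighted factor bounded by $\mathcal{E}_{|\alpha|+1}^{1/2}$ against the $\langle t\rangle$-weighted factor bounded by $\mathcal{X}_{|\alpha|+3}^{1/2}$. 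Summing the dyadic contributions over the $O(\log t)$ annuli that fit inside the off-cone region produces the logarithmic factor $\ln^{1/2}(e+t)$.

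For the third bound, the transition region $\langle t\rangle/3\le r\le 5\langle t\rangle/2$ straddles the light cone and both weights $\langle r\rangle$ and $\langle t-r\rangle$ are extracted simultaneously: the polar Jacobian gives the $\langle r\rangle^{1/2}\sim\langle t\rangle^{1/2}$ factor exactly as in the first bound, while Cauchy-Schwarz against $\mathcal{X}_{|\alpha|+3}^{1/2}$, which carries a full $\langle t-r\rangle$ weight, provides the $\langle t-r\rangle^{1/2}$ factor. The main technical obstacle throughout will be the book-keeping of the commutators $[\partial,\widetilde{\Omega}]$ and $[\partial,\widetilde{S}]$, together with the separate handling of the region $r\lesssim 1$ where the vector fields $\Omega$ and $S$ degenerate, so as to keep the derivative counts exactly at $|\alpha|+3$ and to avoid any spurious logarithmic loss in the near-cone estimate.
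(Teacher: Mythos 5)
The paper does not prove this lemma; it cites Lemma 3.1 of Lei \cite{Lei16} (itself building on \cite{LSZ13}), whose proof is exactly the angular Sobolev embedding plus weighted radial integration that you propose. Your treatment of the first and third inequalities is correct: the radial integration from infinity with Jacobian $s\,ds\,d\theta$ gives the $\langle r\rangle^{1/2}$ gain, the $\langle t-r\rangle$ weight rides along on the $\partial_r$ factor via Cauchy--Schwarz, and the derivative counts $|\alpha|+3$ (resp.\ $|\alpha|+2$, $|\alpha|+3$) come out right once the commutators $[\partial,\widetilde{\Omega}]$, $[\partial,\widetilde{S}]$ are absorbed.

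The gap is in your mechanism for the logarithm in the second inequality. On a dyadic annulus $A_j=\{s\sim 2^j\}$ inside $\{r\le 2\langle t\rangle/3\}$, the annulus-localized estimate reads
\begin{equation*}
\langle t\rangle^2|w(r,\theta)|^2\lesssim \frac{\langle t\rangle^2}{2^{2j}}\|(w,\Omega w)\|_{L^2(A_j)}^2+\frac{\langle t\rangle}{2^{j}}\|(w,\Omega w)\|_{L^2(A_j)}\cdot\langle t\rangle\|\partial_r(w,\Omega w)\|_{L^2(A_j)},
\end{equation*}
with $w=\partial\Gamma^\alpha Y$: only one power of $\langle t\rangle$ can be absorbed into the $\mathcal{X}$-factor, and the other must be eaten by the Jacobian $s\sim 2^j$. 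This works only when $2^j\sim\langle t\rangle$; for inner annuli the prefactor $\langle t\rangle/2^j$ is unbounded, so the dyadic pieces are \emph{not} uniformly $O(\mathcal{E}^{1/2}\mathcal{X}^{1/2})$ and their sum is not $O(\log t)$ times a bounded quantity. The correct argument integrates radially in one stroke from $r$ out to a base sphere $\rho\sim\langle t\rangle$ (where the Jacobian compensates the weight and the boundary term is handled by the near-cone estimate), writes $\int_r^{\rho}|\partial_s w|\,ds\le\bigl(\int_r^{\rho}\tfrac{ds}{s}\bigr)^{1/2}\bigl(\int_0^{\rho}s\langle t-s\rangle^2|\partial_s w|^2ds\bigr)^{1/2}\langle t\rangle^{-1}$, and reads off $\ln^{1/2}$ from $\int_1^{C\langle t\rangle}ds/s$ inside a single global Cauchy--Schwarz; the regime $r\le 1$, where $\ln(\rho/r)$ exceeds $\ln(e+t)$, then still requires the separate near-origin argument that you flag as an obstacle but do not supply. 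As written, your proof of the logarithmic estimate would not close.
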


Next, we state the decay properties of the second order
derivatives of unknowns under $L^\infty$ norm. The following lemma
shows that away from the light cone, the second derivatives of
unknowns decay in time as $\langle t\rangle^{-1}$. But near the
light cone, the decay rate is only $\langle
t\rangle^{-\frac{1}{2}}$, with an extra factor $\langle t -
r\rangle^{- 1}$. For its proof, we refer to Lemma 3.2 in \cite{Lei16}.
\begin{lem}\label{DecayES}
Let $t \geq 4$. Then for $r \leq 2\langle t\rangle /3$, or $r \geq
5\langle t\rangle/4$, there hold
\begin{equation*} 
\langle t\rangle |\partial^2\Gamma^\alpha Y| \lesssim \mathcal{X}_{|\alpha|
+ 4}^{\frac{1}{2}}.
\end{equation*}
For $r \leq 5\langle t\rangle/2$, there holds
\begin{equation*} 
\langle r \rangle^{\frac{1}{2}}\langle t-r\rangle|\partial^2\Gamma^\alpha
Y| \lesssim \mathcal{X}_{|\alpha| + 4}^{\frac{1}{2}} +
\mathcal{E}_{|\alpha| + 3}^{\frac{1}{2}}.
\end{equation*}
\end{lem}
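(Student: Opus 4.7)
The plan is to derive both pointwise bounds by combining a two-dimensional weighted Klainerman--Sobolev inequality with the structural weights built into $\mathcal{X}_\kappa$. The inequality I would invoke is
\begin{equation*}
\langle r\rangle^{1/2}|u(y)|\lesssim \sum_{|\beta_1|+|\beta_2|\leq 2}\|\widetilde{\Omega}^{\beta_1}\partial^{\beta_2}u\|_{L^2(\mathbb{R}^2)},
\end{equation*}
obtained via Fourier expansion in the angular variable (producing the $\widetilde{\Omega}$ derivatives on the right) together with a one-dimensional trace inequality in the radial variable (supplying the $\langle r\rangle^{1/2}$ factor). The whole proof reduces to feeding appropriately weighted versions of $\partial^2\Gamma^\alpha Y$ into this inequality and matching the right-hand side to $\mathcal{X}_{|\alpha|+4}$ or $\mathcal{E}_{|\alpha|+3}$.

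For the first bound, on the away-from-cone set $r\leq 2\langle t\rangle/3$ or $r\geq 5\langle t\rangle/4$, I would exploit that $\langle t-r\rangle\sim\langle t\rangle$ on the interior piece while the exterior piece already carries a $\langle t\rangle$-weight in $\mathcal{X}_\kappa$. Choosing a smooth cutoff $\chi$ supported in this set and applying the Sobolev inequality above to $u=\chi\langle t\rangle\partial^2\Gamma^\alpha Y$, each $\widetilde{\Omega}^{\beta_1}\partial^{\beta_2}$ falling on $u$ produces at worst $\partial^2\Gamma^{\leq|\alpha|+2}Y$ with the same weighted structure, after re-expressing the commutators $[\widetilde{\Omega},\partial^2]$ and $[\partial,\partial^2]$ in terms of $\partial^2$ acting on $\Gamma^{\leq|\alpha|+1}Y$ plus genuinely lower-order pieces. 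The resulting $L^2$ bound is exactly $\mathcal{X}_{|\alpha|+4}^{1/2}$, and the trivial bound $\langle r\rangle^{1/2}\gtrsim 1$ yields $\langle t\rangle|\partial^2\Gamma^\alpha Y|\lesssim \mathcal{X}_{|\alpha|+4}^{1/2}$.

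For the second bound, on the near-cone set $r\leq 5\langle t\rangle/2$, I would apply the same Sobolev inequality to $u=\langle t-r\rangle\partial^2\Gamma^\alpha Y$. The rotation $\widetilde{\Omega}$ annihilates $\langle t-r\rangle$, so it falls only on $\partial^2\Gamma^\alpha Y$ and the resulting norm is absorbed by $\mathcal{X}_{|\alpha|+4}^{1/2}$ via the first two integrals in the definition of $\mathcal{X}_\kappa$. A spatial derivative $\partial$ either passes through, giving $\langle t-r\rangle\partial^2\nabla\Gamma^{\leq|\alpha|}Y$ which is controlled by the third integral in $\mathcal{X}_\kappa$, or lands on $\langle t-r\rangle$, producing a bounded factor together with an unweighted residue $\partial^2\Gamma^{\leq|\alpha|}Y$; the latter is dominated, after moving one $\partial$ onto the vector-field side, by $\mathcal{E}_{|\alpha|+3}^{1/2}$. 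Summing all contributions gives the advertised estimate.

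The main technical obstacle is the bookkeeping of commutators when Sobolev vector fields act on $\partial^2\Gamma^\alpha Y$ rather than on $\Gamma^\alpha\partial^2 Y$: one must verify that no arrangement ever produces more than two additional $\Gamma$-derivatives beyond $|\alpha|$ (matching the $\kappa-2$ budget in $\mathcal{X}_\kappa$) and that every term which loses the $\langle t-r\rangle$ weight fits into $\mathcal{E}_{|\alpha|+3}$ rather than something larger. The piecewise definition of $\mathcal{X}_\kappa$ split at $r=2\langle t\rangle$ is precisely what makes the first bound clean — a single $\langle t-r\rangle^2$-weighted norm would fail in the far exterior, where $\langle t-r\rangle\not\sim\langle t\rangle$ — so the two regions genuinely require the two separate weighted integrals in the definition of the norm.
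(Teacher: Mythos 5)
Your plan is the standard one and matches what the paper relies on: the paper gives no proof here (it cites Lemma~3.2 of \cite{Lei16}), and the intended tools are exactly the weighted Sobolev inequalities recorded in Lemma~\ref{DecayE}, applied to suitably cut-off versions of $\partial^2\Gamma^\alpha Y$, with the derivative bookkeeping ($\partial^2\Gamma^{\le|\alpha|+2}Y$ into $\mathcal{X}_{|\alpha|+4}$, unweighted residues into $\mathcal{E}_{|\alpha|+3}$) exactly as you describe. Your treatment of the first bound, including the role of the split at $r=2\langle t\rangle$ and the fact that $\langle t-r\rangle\sim\langle t\rangle$ on the support of the cutoff's gradient, is correct.

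The one step that fails as written is in the second bound, where you place the weight inside the norm and allow both Sobolev derivatives to fall on it: $\partial_i\partial_j\langle t-r\rangle$ contains $\partial_i\omega_j=(\delta_{ij}-\omega_i\omega_j)/r$, so this term behaves like $r^{-1}|\partial^2\Gamma^\alpha Y|$ near the origin, and $\int_{r\le 1}r^{-2}|f|^2\,dy$ diverges logarithmically in two dimensions unless $f(0)=0$; it is not controlled by $\mathcal{E}$ or $\mathcal{X}$. This is precisely why the inequalities in Lemma~\ref{DecayE} are stated with the weight \emph{outside} the $L^2$ norms and with only $\partial_r\Omega^a$, $a\le 1$, acting on the unknown: applying the second inequality of Lemma~\ref{DecayE} directly to $u=\chi\,\partial^2\Gamma^\alpha Y$ gives $r\langle t-r\rangle^2|u|^2\lesssim\sum_{a\le1}\big(\|\langle t-r\rangle\partial_r\Omega^a u\|_{L^2}^2+\|\langle t-r\rangle\Omega^a u\|_{L^2}^2\big)$ with no differentiated weight, the $\partial_r\nabla$-type terms being absorbed by the third (global) integral in $\mathcal{X}_{|\alpha|+4}$. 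For the region $r\lesssim 1$ (where $\langle r\rangle^{1/2}\sim1$ and $\langle t-r\rangle\sim\langle t\rangle$) one should instead fall back on the fourth inequality of Lemma~\ref{DecayE} or on the first assertion of the present lemma. With that substitution your argument goes through; the rest is a repairable omission of the explicit cutoff at $r\sim 3\langle t\rangle$ in the second bound.
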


Now we state the following more exact version of the weighted Soblev-type inequalities.
For its proof, we refer to Lemma 3.3 and Lemma 3.4 in \cite{LSZ13}.
\begin{lem}\label{DecayE}
For all $Y(y)\in H^2(\mathbb{R}^2)$, there holds
\begin{align*}
r|Y(y)|^2&\lesssim \sum_{a=0,1} \Large[ \|\partial_r \Omega^a Y\|_{L^2}^2+\|\Omega^a Y\|_{L^2}^2 \Large],\\
r{\langle t-r\rangle}^2|Y(y)|^2&\lesssim \sum_{a=0,1}\Large[\|\langle t-r\rangle \partial_r \Omega^a Y\|_{L^2}^2
+\|\langle t-r\rangle\Omega^a Y\|_{L^2}^2\Large],\\
r{\langle t-r\rangle}|Y(y)|^2&\lesssim \sum_{a=0,1}\Large[\|\langle t-r\rangle \partial_r \Omega^a Y\|_{L^2}^2
+\|\Omega^a Y\|_{L^2}^2\Large],\\
\langle t\rangle \| Y\|_{L^\infty(r\leq \langle t\rangle/2)}
&\lesssim \sum_{|a|\le2}\|\langle t-r\rangle\partial^a Y\|_{L^2},
\end{align*}
provided the right-hand side is finite.
\end{lem}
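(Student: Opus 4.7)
The plan is to prove the four inequalities by the standard two-step technique for weighted pointwise estimates in $\mathbb{R}^2$: first reduce the pointwise bound at $(r,\theta)$ to the angular $L^2$ quantity $G_a(r)=\int_{S^1}|\Omega^a Y|^2 d\theta$ via Sobolev embedding on $S^1$ (which gives $|Y(r,\theta)|^2\lesssim G_0(r)+G_1(r)$), then run a one-dimensional radial fundamental-theorem-of-calculus argument on $G_a$, with weights chosen to match the claimed right-hand side. The last inequality is structurally different and uses 2D Sobolev embedding with a spatial cutoff.

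For the first inequality I would fix $a\in\{0,1\}$, set $u=\Omega^a Y$, and write $|u(r,\theta)|^2\le 2\int_r^\infty |u||\partial_s u|\,ds$ pointwise in $\theta$. Multiplying by $r$, using $r\le s$, integrating in $\theta$, and Cauchy--Schwarz against the polar measure $s\,ds\,d\theta$ give $r G_a(r)\lesssim \|u\|_{L^2}\|\partial_r u\|_{L^2}$, which finishes the first inequality after summing over $a$. The second inequality I would obtain as a direct corollary by applying the first to $Z:=\langle t-r\rangle Y$: since $\langle t-r\rangle$ is radial it commutes with $\Omega^a$, and the bound $|\partial_r\langle t-r\rangle|\le 1$ controls the commutator term, yielding $\|\partial_r\Omega^a Z\|^2\lesssim \|\Omega^a Y\|^2+\|\langle t-r\rangle\partial_r\Omega^a Y\|^2$; the unweighted piece is then absorbed by $\|\langle t-r\rangle\Omega^a Y\|^2$.

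The third inequality is the one I expect to be the main obstacle, because the right-hand side mixes the fully weighted $\|\langle t-r\rangle\partial_r\Omega^a Y\|^2$ with the unweighted $\|\Omega^a Y\|^2$, so the natural trick of applying the first inequality to $\langle t-r\rangle^{1/2}Y$ produces a spurious $\int\langle t-r\rangle|\Omega^a Y|^2\,dy$ that is too large. My plan is to argue directly: starting from
\[
\langle t-r\rangle |u|^2=-\int_r^\infty \partial_s\bigl(\langle t-s\rangle |u|^2\bigr)\,ds,
\]
expand the $s$-derivative and use crucially that $|\partial_s\langle t-s\rangle|=|t-s|/\langle t-s\rangle\le 1$ to bound $\langle t-r\rangle|u(r,\theta)|^2\le \int_r^\infty |u|^2\,ds+2\int_r^\infty \langle t-s\rangle|u||\partial_s u|\,ds$. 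Multiplying by $r$, using $r\le s$, integrating in $\theta$ and rewriting as a 2D polar integral, Cauchy--Schwarz and AM--GM give $rG_a(r)\langle t-r\rangle\lesssim \|u\|_{L^2}^2+\|\langle t-r\rangle \partial_r u\|_{L^2}^2$, as desired.

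For the final inequality the argument is different and straightforward: choose a smooth cutoff $\chi(y)$ with $\chi\equiv 1$ on $\{r\le \langle t\rangle/2\}$, $\chi\equiv 0$ on $\{r\ge 3\langle t\rangle/4\}$, and $|\partial^b \chi|\lesssim \langle t\rangle^{-|b|}$. The 2D Sobolev embedding $H^2\hookrightarrow L^\infty$ gives $\|Y\|_{L^\infty(r\le\langle t\rangle/2)}\le \|\chi Y\|_{L^\infty}\lesssim \sum_{|b|\le 2}\|\partial^b(\chi Y)\|_{L^2}$. Distributing derivatives via the Leibniz rule and exploiting that $\langle t-r\rangle\gtrsim \langle t\rangle$ everywhere on the support of $\chi$ (and likewise on the support of each $\partial^b\chi$) converts the factor $\langle t\rangle$ into $\langle t-r\rangle$ at the price of absolute constants, producing the stated bound. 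The principal technical point of the whole lemma is thus the careful integration by parts in the proof of the third inequality; the remaining parts are routine adaptations of 1D trace and Hardy-type estimates.
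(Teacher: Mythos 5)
Your proof is correct, and it follows essentially the same route as the cited source (the paper itself gives no proof, deferring to Lemmas 3.3--3.4 of Lei--Sideris--Zhou \cite{LSZ13}): angular Sobolev embedding on $S^1$ plus a radial fundamental-theorem-of-calculus argument with the weight kept inside the $s$-derivative for the third inequality, and a cutoff plus $H^2\hookrightarrow L^\infty$ with $\langle t-r\rangle\gtrsim\langle t\rangle$ on $\{r\le 3\langle t\rangle/4\}$ for the fourth. No gaps.
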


The next lemma gives a preliminary estimate of the weighted $L^2$
 energy norm $\mathcal{X}_{\kappa}$ and $\mathcal{Z}_{\kappa}$.
\begin{lem}\label{WE-1}
There holds
\begin{align*}\nonumber
&\mathcal{X}_{2} \lesssim \mathcal{E}_2 +
\langle t\rangle^2 \|(\partial_t^2 - \Delta)Y\|^2_{L^2},\\
&\mathcal{Z}_{2} \lesssim \mathcal{E}_2 +
\langle t\rangle^2 \|(\partial_t^2 - \Delta)Y\|^2_{L^2}.
\end{align*}
\end{lem}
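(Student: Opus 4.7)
The plan is to reduce both estimates to a single Klainerman--Sideris-type pointwise bound
\begin{equation*}
\langle t-r\rangle\,|\partial^2 Y| \lesssim \sum_{|\alpha|\leq 1}|\partial\Gamma^\alpha Y| + \langle t\rangle\,|(\partial_t^2-\Delta)Y|,
\end{equation*}
valid for any smooth $Y$. This pointwise inequality follows from the classical algebraic identities that rewrite $(t^2-r^2)\partial_t^2$ and $(t^2-r^2)\partial_i\partial_j$ as linear combinations of $\partial S$, $\partial\Omega$, first derivatives of $Y$, and $t^2(\partial_t^2-\Delta)Y$, combined with $t^2-r^2=(t-r)(t+r)$ and the elementary trade $\langle t+r\rangle\lesssim\langle t\rangle$ in the regions where $\langle t-r\rangle$ is the small factor. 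Squaring and integrating produces the master $L^2$ bound $\|\langle t-r\rangle\partial^2 Y\|_{L^2}^2 \lesssim \mathcal{E}_2 + \langle t\rangle^2\|(\partial_t^2-\Delta)Y\|_{L^2}^2$.

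For $\mathcal{X}_2$ (only $|\alpha|=0$ contributes), I would handle the three constituent integrals separately. The integral over $\{r\leq 2\langle t\rangle\}$ with weight $\langle t-r\rangle^2$ and the integral $\int_{\mathbb{R}^2}\langle t-r\rangle^2|\partial\nabla Y|^2\,dy$ are both immediate consequences of the master bound, using $|\partial\nabla Y|\leq|\partial^2 Y|$ for the latter. For the exterior integral on $\{r>2\langle t\rangle\}$ with weight $\langle t\rangle^2$, one has $r-t\geq\langle t\rangle>0$ in that region, so $\langle t\rangle\leq\langle t-r\rangle$ pointwise; this upgrades the weight and reduces once more to the master bound.

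The subtle piece is $\mathcal{Z}_2$: because $\langle t-r\rangle^2$ is outside the Muckenhoupt $A_2$ class on $\mathbb{R}^2$, the zero-order multiplier $P(\nabla)$ cannot be naively commuted through the weighted $L^2$ norm. My plan is to absorb $P(\nabla)$ into the input of the master inequality. Setting $Z=P(\nabla)Y$, the multiplier commutes with $\partial$, $\nabla$, and $(\partial_t^2-\Delta)$, so $\partial\nabla P(\nabla)Y=\partial\nabla Z$ and $(\partial_t^2-\Delta)Z=P(\nabla)(\partial_t^2-\Delta)Y$. Applying the master estimate to $Z$ yields
\begin{equation*}
\int_{\mathbb{R}^2}\langle t-r\rangle^2|\partial\nabla Z|^2\,dy \lesssim \sum_{|\alpha|\leq 1}\|\partial\Gamma^\alpha Z\|_{L^2}^2 + \langle t\rangle^2\|(\partial_t^2-\Delta)Z\|_{L^2}^2.
\end{equation*}
The second term is controlled by $\langle t\rangle^2\|(\partial_t^2-\Delta)Y\|_{L^2}^2$ because $\langle t\rangle$ is spatially constant and $P(\nabla)$ is $L^2$-bounded. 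For the first, I would invoke the commutator identities $\widetilde{S}P(\nabla)Y=P(\nabla)\widetilde{S}Y-[P(\nabla),r\partial_r]Y$ and $\widetilde{\Omega}P(\nabla)Y=P(\nabla)\widetilde{\Omega}Y-[P(\nabla),x^\perp\cdot\nabla]Y$ (using that $P(\nabla)$ commutes with $\partial_t$ and with the constant matrix $A$). The commutators $[P(\nabla),r\partial_r]$ and $[P(\nabla),x^\perp\cdot\nabla]$ are themselves zero-order Fourier multipliers with symbols $\xi\cdot\nabla_\xi P(\xi)$ and $\xi^\perp\cdot\nabla_\xi P(\xi)$ (up to sign), hence $L^2$-bounded, so $\|\partial\Gamma^\alpha Z\|_{L^2}\lesssim\|\partial\Gamma^{\leq 1}Y\|_{L^2}\lesssim\mathcal{E}_2^{1/2}$.

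The main obstacle is the master Klainerman--Sideris pointwise inequality itself: in 2D with only $\{\partial, S, \Omega\}$ available (no Lorentz boosts), the algebraic derivation requires careful handling of the polar-coordinate contributions and of the light-cone region where the weight $\langle t-r\rangle$ is degenerate. The auxiliary commutator bounds for $P(\nabla)$ are routine symbolic calculus once their zero-order structure is recognized.
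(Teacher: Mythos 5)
Your treatment of $\mathcal{Z}_2$ is exactly the paper's intended argument: the paper proves the first inequality by citation (Lemma 3.3 of \cite{Lei16}, Lemma 2.3 of \cite{KS96}) and disposes of the second in one line by "$L^2$ boundedness of the zero-order operator," which unpacks precisely to your scheme --- apply the Klainerman--Sideris estimate to $Z=P(\nabla)Y$, control $\langle t\rangle^2\|(\partial_t^2-\Delta)Z\|_{L^2}^2$ by $L^2$-boundedness since $\langle t\rangle$ is spatially constant, and control $\|\partial\Gamma^{\leq 1}Z\|_{L^2}$ by commuting $P(\nabla)$ past $\widetilde S$ and $\widetilde\Omega$ via zero-order commutators (the paper's Lemma \ref{Commu}). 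That part is correct and is the only place the paper actually says anything.

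There is, however, a genuine gap in your re-derivation of the first inequality. The master pointwise bound $\langle t-r\rangle|\partial^2Y|\lesssim\sum_{|\alpha|\leq1}|\partial\Gamma^\alpha Y|+\langle t\rangle|(\partial_t^2-\Delta)Y|$ is \emph{not} valid for all $(t,y)$: the algebraic identities produce $\langle t+r\rangle|(\partial_t^2-\Delta)Y|$ on the right, and the trade $\langle t+r\rangle\lesssim\langle t\rangle$ (which you yourself flag) only holds where $r\lesssim t$. In the exterior $r>2\langle t\rangle$ one has $\langle t+r\rangle\sim\langle t-r\rangle\sim r\gg\langle t\rangle$, and the bound genuinely fails for the pure time component: a test function $Y=\chi(y/R)e^{i\lambda t}$ with $R\gg t$ gives $\langle t-r\rangle|\partial_t^2Y|\sim R\lambda^2$ against a right-hand side of order $\langle t\rangle\lambda^2$. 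This is exactly why $\mathcal{X}_\kappa$ is defined with only the weaker weight $\langle t\rangle^2$ on $\{r>2\langle t\rangle\}$. Consequently your step for the exterior integral --- upgrading $\langle t\rangle\leq\langle t-r\rangle$ and then invoking the master bound --- does not go through for $\partial_t^2Y$. The repair is standard: on $\{r>2\langle t\rangle\}$ write $\partial_t^2Y=(\partial_t^2-\Delta)Y+\Delta Y$, bound $\langle t\rangle^2|(\partial_t^2-\Delta)Y|^2$ directly by the second term of the lemma, and bound $\langle t\rangle^2|\Delta Y|^2\leq\langle t-r\rangle^2|\partial\nabla Y|^2$, which (together with all mixed and spatial second derivatives) falls under the $\partial\nabla$-part of the Klainerman--Sideris identity; that part, carried by $S$ and $\Omega$ which absorb the $r$-weight, is the piece that does hold on all of $\mathbb{R}^2$ with $\langle t\rangle\|(\partial_t^2-\Delta)Y\|_{L^2}$ on the right. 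With this correction your argument, including the $\mathcal{Z}_2$ step applied to $Z=P(\nabla)Y$, is sound.
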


\begin{proof}
For the first inequality, see Lemma 3.3 in \cite{Lei16}
and Lemma 2.3 in \cite{KS96}.
The second inequality follows from the first estimate and the $L^2$ boundedness of the
zero-order differential operator.
\end{proof}


At the end of this section, let us record the estimate of the good derivatives
$\omega \partial_t + \nabla$.
For its proof, we refer to Lemma 3.4 in \cite{Lei16}.
\begin{lem}\label{GoodDeri}
For $\frac{\langle t\rangle}{3} \leq r \leq \frac{5\langle
t\rangle}{2}$, there holds
\begin{equation}\nonumber
\langle t\rangle|(\omega \partial_t + \nabla)\partial Y| \lesssim
|\partial Y| + |\partial\Gamma Y| + t|(\partial_t^2 - \Delta) Y)|.
\end{equation}
\end{lem}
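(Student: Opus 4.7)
The plan is to split the operator $\omega\partial_t + \nabla$ into a null-direction part $\bar L := \partial_t + \partial_r$ and an angular part $r^{-1}\Omega$ via the Cartesian decomposition $\partial_i = \omega_i\partial_r + \omega_i^\perp r^{-1}\Omega$, so that $\omega_i\partial_t + \partial_i = \omega_i \bar L + \omega_i^\perp r^{-1}\Omega$. This reduces the claim to bounding $t|\bar L\partial Y|$ and $(t/r)|\Omega\partial Y|$ separately. For the angular piece, since $r \geq \langle t\rangle/3$ in the stated region we have $t/r \lesssim 1$, and the commutator $[\Omega,\partial]$ is a rotation of $\partial$, so $(t/r)|\Omega\partial Y| \lesssim |\partial\Omega Y| + |\partial Y|$. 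Substituting $\Omega = \widetilde\Omega - A$ absorbs this into $|\partial\Gamma Y|+|\partial Y|$.

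For the radial piece, the scaling identity $t\bar L = S + (t-r)\partial_r$, together with $[\partial, S] = \partial$ and $SY = \widetilde S Y + Y$, yields
\begin{equation*}
t\bar L\partial Y = S\partial Y + (t-r)\partial_r\partial Y = \partial \widetilde S Y + (t-r)\partial_r\partial Y.
\end{equation*}
The first summand is bounded by $|\partial\Gamma Y|$, so the lemma reduces to establishing the pointwise weighted second-order bound
\begin{equation*}
(t-r)|\partial_r\partial Y| \lesssim t|(\partial_t^2 - \Delta) Y| + |\partial\Gamma Y| + |\partial Y|
\end{equation*}
in the annulus $\langle t\rangle/3 \leq r \leq 5\langle t\rangle/2$.

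This last bound is a pointwise Klainerman--Sideris-type estimate, obtained by cascading identities. Starting from the polar form of the D'Alembertian, $\partial_t^2 - \Delta = L\bar L - r^{-1}\partial_r - r^{-2}\Omega^2$ with $L := \partial_t - \partial_r$, one has $L\bar L Y = (\partial_t^2 - \Delta) Y + r^{-1}\partial_r Y + r^{-2}\Omega^2 Y$; multiplying by $t$ and using $r\sim t$ together with $|\Omega u|\lesssim r|\partial u|$ gives $t|L\bar L Y| \lesssim t|(\partial_t^2-\Delta)Y| + |\partial\Gamma Y| + |\partial Y|$. A second application of $t\bar L = S + (t-r)\partial_r$ to $\bar L Y$, combined with $[S, \bar L] = -\bar L$, yields the rearrangement $(t+r)\bar L^2 Y = 2\bar L\widetilde S Y - (t-r) L\bar L Y$ and hence the analogous bound for $(t-r)|\bar L^2 Y|$. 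Dually, combining $(t-r)\partial_r = t\bar L - S$ and $(t-r)\partial_t = S - r\bar L$ gives $(t-r)L = 2S - (t+r)\bar L$, which applied to $LY$ and using $S(LY) = L\widetilde S Y$ and $\bar L(LY) = L\bar L Y$ furnishes the corresponding bound for $(t-r)|L^2 Y|$. Expanding $\partial_r\partial Y$ in the null/angular frame --- using $\partial_r^2 = \tfrac14(\bar L^2 - 2L\bar L + L^2)$, $\partial_t\partial_r = \tfrac14(\bar L^2 - L^2)$, and $\partial_r\partial_j = \omega_j\partial_r^2 + \omega_j^\perp r^{-1}\Omega\partial_r - \omega_j^\perp r^{-2}\Omega$ for spatial $\partial_j$ --- and assembling the three second-order bounds produces the desired pointwise estimate. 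The main obstacle is precisely this last step: controlling the two genuinely "bad" weighted second derivatives $(t-r)\bar L^2 Y$ and $(t-r)L^2 Y$, which requires invoking the scaling identity and its dual a second time while carefully tracking the $r^{-1}$ and $r^{-2}$ commutators; after those are in hand, the rest is bookkeeping.
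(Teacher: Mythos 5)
Your proof is correct and follows essentially the same route as the proof the paper cites (Lemma 3.4 of Lei \cite{Lei16}): split $\omega\partial_t+\nabla$ into the null direction $\partial_t+\partial_r$ plus the angular part $\omega^\perp r^{-1}\Omega$, use $t(\partial_t+\partial_r)=S+(t-r)\partial_r$, and control $(t-r)\partial_r\partial Y$ by the pointwise Klainerman--Sideris second-derivative bound derived in the null frame. All the commutator and frame identities you invoke check out, so no gap here.
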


Next we show
that the commutator of generalized operator and zero-order differential operator
is still zero-order differential operator.
\begin{lem}\label{Commu}
There holds
\begin{equation*} 
[\Gamma,Q(\nabla)]=P(\nabla),
\end{equation*}
where $Q(\nabla)$ and $P(\nabla)$ are zero-order differential operators.
\end{lem}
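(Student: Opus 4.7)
My plan is to verify the identity generator by generator on the family $\Gamma\in\{\partial_t,\partial_1,\partial_2,\widetilde{\Omega},\widetilde{S}\}$, using the Fourier side: $Q(\nabla)$ is multiplication by a symbol $q(\xi)$ that is smooth away from the origin and homogeneous of degree $0$. The single computational tool I will need throughout is the Leibniz-type commutator with a spatial coordinate, namely
\begin{equation*}
[y_j,\,Q(\nabla)] \;=\; (i\,\partial_{\xi_j} q)(\nabla),
\end{equation*}
which one reads off directly from the Fourier definition of $Q(\nabla)$.

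The translation cases are immediate: $\partial_t$ acts only in time while $\partial_1,\partial_2$ are constant-coefficient spatial operators, so $[\partial_t,Q(\nabla)]=[\partial_j,Q(\nabla)]=0$ and I may take $P\equiv 0$. For the scaling generator I write $\widetilde{S}=t\partial_t+y\cdot\nabla-c$ with $c\in\{0,1\}$ depending on whether $\Gamma$ acts on scalars or vectors; the constant and the $t\partial_t$ piece both commute with the purely spatial $Q(\nabla)$, and the Euler field gives
\begin{equation*}
[y\cdot\nabla,\,Q(\nabla)] \;=\; \sum_j [y_j,Q(\nabla)]\,\partial_j \;=\; (-\xi\cdot\nabla_\xi q)(\nabla),
\end{equation*}
which vanishes identically by Euler's relation for symbols homogeneous of degree $0$. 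Hence $[\widetilde{S},Q(\nabla)]=0$.

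The only nontrivial case is $\widetilde{\Omega}$. Acting on a vector-valued argument, the extra term $AY$ is multiplication by a constant $2\times 2$ matrix, which commutes componentwise with the scalar multiplier $Q(\nabla)$, so the task reduces to computing $[\Omega,Q(\nabla)]$ with $\Omega=y_1\partial_2-y_2\partial_1$. The same commutator formula yields
\begin{equation*}
[\Omega,\,Q(\nabla)] \;=\; [y_1,Q(\nabla)]\,\partial_2 - [y_2,Q(\nabla)]\,\partial_1 \;=\; (\xi^\perp\!\cdot\nabla_\xi q)(\nabla),
\end{equation*}
and since $q$ is homogeneous of degree $0$, $\nabla_\xi q$ is homogeneous of degree $-1$, so each term $\xi_k\partial_{\xi_j}q$ is again homogeneous of degree $0$. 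That is precisely a zero-order Fourier multiplier, giving the desired $P(\nabla)$.

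The only real obstacle is a soft one: the symbol $\xi^\perp\!\cdot\nabla_\xi q(\xi)$ is, like $q$ itself, typically singular at the origin, so to justify that $P(\nabla)$ is an $L^2$-bounded zero-order operator in the same sense as $Q(\nabla)$ I would invoke the Mihlin--H\"ormander multiplier theorem, whose hypotheses are automatic for any symbol homogeneous of degree $0$ and smooth on $\mathbb S^1$. Everything else is bookkeeping on the Fourier side.
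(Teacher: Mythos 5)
Your proposal is correct and follows essentially the same route as the paper: a Fourier-side computation of the commutator symbol, generator by generator, yielding $\xi_k\partial_{\xi_j}q-\xi_j\partial_{\xi_k}q$ for the rotation and $\xi\cdot\nabla_\xi q$ for the scaling, both again homogeneous of degree zero. You are in fact somewhat more careful than the paper, in treating the modification terms ($AY$ and the constant in $\widetilde{S}$), in noting that the scaling commutator actually vanishes by Euler's relation for degree-zero symbols, and in invoking Mihlin--H\"ormander for the $L^2$-boundedness of the resulting multiplier.
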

%
\begin{proof}
Firstly, if $\Gamma$ takes derivative $\partial$, the lemma is obvious.
Next, if $\Gamma$ takes rotational operator $\Omega_{jk}=x_j \partial_k-x_k\partial_j$ for $1\leq j<k\leq n$
where $n$ is the space dimension. We write
\begin{align*}
&[\Omega_{jk},Q(\nabla)]u \\
&=(x_j \partial_k-x_k\partial_j )Q(\nabla)u-Q(\nabla)(x_j \partial_k-x_k\partial_j )u \\
&=\frac{1}{(2\pi)^{\frac n2}}\int_{\mathbb{R}^n} e^{i x\cdot \xi}[\partial_{\xi_j}(\xi_k Q(\xi)\hat{u}(\xi))-\partial_{\xi_k}(\xi_j Q(\xi)\hat{u}(\xi))
                         -Q(\xi)(\partial_{\xi_j}(\xi_k \hat{u}(\xi))-\partial_{\xi_k}(\xi_j \hat{u}(\xi))) ] d\xi \\
&=\frac{1}{(2\pi)^{\frac n2}} \int_{\mathbb{R}^n} e^{i x\cdot \xi}[\xi_k \partial_{\xi_j}Q(\xi)- \xi_j \partial_{\xi_k} Q(\xi)] \hat{u}(\xi) d\xi \\
&=Q'(\nabla)u.
\end{align*}
It's easy to see $Q'(\nabla)$ is a zero-order differential operator with symbol
$\xi_k \partial_{\xi_j}Q(\xi)- \xi_j \partial_{\xi_k} Q(\xi)$.

 If $\Gamma$ takes scaling operator, we calculate
\begin{align*}
[S,Q(\nabla)]u
&=(t\partial_t+x\cdot \nabla )Q(\nabla)u-Q(\nabla)(t\partial_t+x\cdot \nabla  )u \\
&=\frac{1}{(2\pi)^{\frac n2}}\int_{\mathbb{R}^n} e^{i x\cdot \xi}[\partial_{\xi_j}(\xi_j Q(\xi)\hat{u})
                         -Q(\xi)\partial_{\xi_j}(\xi_j \hat{u}) ] d\xi \\
&=\frac{1}{(2\pi)^{\frac n2}}\int_{\mathbb{R}^n} e^{i x\cdot \xi}[\xi_j \partial_{\xi_j}Q(\xi)] \hat{u} d\xi \\
&=Q''(\nabla)u.
\end{align*}
We also see that $Q''(\nabla)$ is a zero-order differential operator with symbol $\xi\cdot \partial_{\xi}Q(\xi)$.
Thus the lemma is proved.
\end{proof}

\section{Estimate of the $L^2$ weighted norm $\mathcal{X}_{\kappa}$}\label{WE}

Now we estimate the $L^2$ weighted norm
$\mathcal{X}_{\kappa}$. Following from Lei \cite{Lei16},
we show that the $L^2$ norm of $\nabla\Gamma^\alpha p$ and
$(\partial_t^2 - \Delta)\Gamma^\alpha Y$ which involve the
$(|\alpha| + 2)$-th order derivative of unknowns can be bounded by
certain quantities which  involve only $(|\alpha| + 1)$-th order
derivative of unknowns.
Due to  \eqref{Struc-2},
some more structures play an important role in the energy estimate.

\begin{lem}\label{SN-1}
Suppose $\|\nabla Y\|_{L^\infty} \leq \delta$ for some
absolutely positive constant $\delta < 1$. Then there holds
\begin{equation}\nonumber
\|\nabla\Gamma^\alpha p\|_{L^2} + \|(\partial_t^2 -
\Delta)\Gamma^\alpha Y\|_{L^2} \lesssim \Pi(\alpha, 2)
\end{equation}
provided that $\delta$ is small enough, where
\begin{align}\label{D8}
\Pi(\alpha,2) =
& \sum_{\beta + \gamma = \alpha,\
  \gamma \neq \alpha}\big\||\nabla\Gamma^\beta Y||(\partial_t^2
   - \Delta)\Gamma^\gamma Y|\big\|_{L^2}\\\nonumber
& +\ \sum_{\beta + \gamma = \alpha,\
  |\beta| \geq |\gamma|}\Pi_1 + \sum_{\beta + \gamma = \alpha,\
  |\beta| < |\gamma|}\Pi_2,
\end{align}
and $\Pi_1$ and $\Pi_2$ are given by
\begin{align}
&\Pi_1=\|\nabla (-\Delta)^{-1} \nabla_i
(\partial_t\Gamma^\beta Y_j\partial_t\nabla_i^\perp\Gamma^\gamma Y_j^\perp
 -\nabla_k\Gamma^\beta Y_j\nabla_k\nabla_i^\perp\Gamma^\gamma Y_j^\perp) \|_{L^2},
 \label{D9}\\[-4mm]\nonumber\\
&\Pi_2=  \|\nabla (-\Delta)^{-1} \nabla_i^\perp
(\partial_t\nabla_i\Gamma^\beta Y_j\partial_t\Gamma^\gamma Y_j^\perp
 -\nabla_k\nabla_i\Gamma^\beta Y_j\nabla_k\Gamma^\gamma Y_j^\perp) \|_{L^2}.\label{D10}
\end{align}
\end{lem}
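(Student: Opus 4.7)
The plan is to derive an elliptic equation for $\Gamma^\alpha p$ by taking the divergence of \eqref{Elasticity-D}, then exploit the fact that, by \eqref{Struc-2}, $\nabla\cdot\Gamma^\alpha Y$ is itself quadratic, together with the identity $\nabla_i\nabla_i^\perp = 0$, to produce the derivative-saving structure encoded in $\Pi_1$ and $\Pi_2$. Writing $F_\alpha = \sum_{\beta+\gamma=\alpha} C_\alpha^\beta (\nabla \Gamma^\beta Y)^\top (\partial_t^2 - \Delta)\Gamma^\gamma Y$, taking the divergence of \eqref{Elasticity-D} yields $-\Delta\Gamma^\alpha p = (\partial_t^2-\Delta)(\nabla\cdot\Gamma^\alpha Y) + \nabla\cdot F_\alpha$, so
$$\nabla\Gamma^\alpha p \;=\; \nabla(-\Delta)^{-1}\nabla\cdot F_\alpha \;+\; \nabla(-\Delta)^{-1}(\partial_t^2-\Delta)(\nabla\cdot\Gamma^\alpha Y),$$
and I will bound the two pieces separately.

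The first piece is straightforward: $\nabla(-\Delta)^{-1}\nabla\cdot$ is a $0$-order Riesz operator, so $\|\nabla(-\Delta)^{-1}\nabla\cdot F_\alpha\|_{L^2} \lesssim \|F_\alpha\|_{L^2}$, which is dominated by the first sum in $\Pi(\alpha,2)$ after separating the top-order piece (with $\gamma = \alpha$), which is bounded by $\|\nabla Y\|_{L^\infty}\|(\partial_t^2-\Delta)\Gamma^\alpha Y\|_{L^2}\leq \delta\|(\partial_t^2-\Delta)\Gamma^\alpha Y\|_{L^2}$ and will be absorbed later. For the second piece, I substitute \eqref{Struc-2} and expand $(\partial_t^2-\Delta)[AB] = ((\partial_t^2-\Delta)A)B + A(\partial_t^2-\Delta)B + 2(\partial_t A\,\partial_t B - \nabla A\cdot\nabla B)$ with $A = \nabla_i\Gamma^\beta Y_j$ and $B = \nabla_i^\perp\Gamma^\gamma Y_j^\perp$. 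The key algebraic trick is the identity $\nabla_i\nabla_i^\perp = 0$ (equivalently, $\nabla_i^\perp\nabla_i = 0$). For the first two Leibniz terms this gives
$$(\partial_t^2-\Delta)\nabla_i\Gamma^\beta Y_j\cdot\nabla_i^\perp\Gamma^\gamma Y_j^\perp \;=\; \nabla_i\bigl[(\partial_t^2-\Delta)\Gamma^\beta Y_j\cdot\nabla_i^\perp\Gamma^\gamma Y_j^\perp\bigr],$$
so that after hitting with $\nabla(-\Delta)^{-1}$ the outer $\nabla_i$ is swallowed and I am left with a Riesz-type operator on a quadratic product of the correct form, contributing to the first sum of $\Pi(\alpha,2)$ (modulo another absorbable $\delta$-small $\beta=\alpha$ piece). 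For the null piece $\partial_t A\,\partial_t B - \nabla A\cdot\nabla B$, I pull out an $\nabla_i$ when $|\beta| \geq |\gamma|$ and an $\nabla_i^\perp$ when $|\beta| < |\gamma|$; the two cases yield, respectively,
$$\nabla_i\bigl[\partial_t\Gamma^\beta Y_j\,\partial_t\nabla_i^\perp\Gamma^\gamma Y_j^\perp - \nabla_k\Gamma^\beta Y_j\,\nabla_k\nabla_i^\perp\Gamma^\gamma Y_j^\perp\bigr]$$
and the analogous expression with $\nabla_i^\perp$ on the outside. Inserting these under $\nabla(-\Delta)^{-1}$ produces exactly the quantities $\Pi_1$ in \eqref{D9} and $\Pi_2$ in \eqref{D10}.

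Combining the two steps gives $\|\nabla\Gamma^\alpha p\|_{L^2} \lesssim \Pi(\alpha,2) + \delta\|(\partial_t^2-\Delta)\Gamma^\alpha Y\|_{L^2}$. Going back to \eqref{Elasticity-D} and splitting off the top-order piece of $F_\alpha$ in the same way produces a matching bound for $\|(\partial_t^2-\Delta)\Gamma^\alpha Y\|_{L^2}$; the $\delta$-small terms on the right are then absorbed into the left. The main obstacle is the algebraic manoeuvre in the second piece: a naive Leibniz expansion of $(\partial_t^2-\Delta)(\nabla\cdot\Gamma^\alpha Y)$ distributes two derivatives among the two factors (total order $|\alpha|+2$), too expensive for the highest-order energy estimate; the identity $\nabla_i\nabla_i^\perp = 0$ is precisely what lets me pull one derivative outside the product and cancel it against the $\nabla(-\Delta)^{-1}$, bringing the total order back down to $|\alpha|+1$. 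The choice of pulling $\nabla_i$ versus $\nabla_i^\perp$, dictated by whether $|\beta|$ or $|\gamma|$ is larger, is exactly what gives rise to the two separate expressions $\Pi_1$ and $\Pi_2$ in which the strong null structure of Lei can later be exploited.
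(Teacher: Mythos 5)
Your proposal is correct and follows essentially the same route as the paper: applying $\nabla(-\Delta)^{-1}\nabla\cdot$ to \eqref{Elasticity-D}, substituting \eqref{Struc-2}, and using $\nabla_i\nabla_i^\perp=0$ to pull one derivative outside each product (choosing $\nabla_i$ or $\nabla_i^\perp$ according to whether $|\beta|\geq|\gamma|$), so that it is cancelled by the $(-\Delta)^{-1}$, with the $\delta$-small top-order pieces absorbed at the end. This matches the paper's argument in \eqref{D1}--\eqref{D11} step for step.
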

\begin{proof}
By \eqref{Elasticity-D}, one writes the system as follows:
\begin{equation}\nonumber
- \nabla\Gamma^\alpha p = (\partial_t^2 - \Delta)
\Gamma^\alpha Y + \sum_{\beta + \gamma = \alpha}C_\alpha^\beta(\nabla \Gamma^\beta Y)^{\top}(\partial_t^2 -
\Delta)\Gamma^\gamma Y.
\end{equation}
Applying the operator $\nabla(- \Delta)^{-1}\nabla\cdot$  to the above system, one obtains
\begin{align}\nonumber
\nabla\Gamma^\alpha p
=& \sum_{\beta + \gamma = \alpha,\
  \gamma \neq \alpha}\nabla(- \Delta)^{-1}\nabla\cdot[C_\alpha^\beta(\nabla\Gamma^\beta Y)^\top(\partial_t^2
   - \Delta)\Gamma^\gamma Y]\\\nonumber
&+\ \nabla(- \Delta)^{-1}\nabla\cdot[(\nabla Y)^\top(\partial_t^2 -
  \Delta)\Gamma^\alpha Y]\\[-4mm]\nonumber\\\nonumber
&+\ \nabla(- \Delta)^{-1}\nabla\cdot(\partial_t^2 -
  \Delta)\Gamma^\alpha Y.
\end{align}
By the $L^2$ boundedness of the Riesz transform,
one has
\begin{align}\label{D1}
\|\nabla\Gamma^\alpha p\|_{L^2}
\lesssim& \sum_{\beta + \gamma = \alpha,\
  \gamma \neq \alpha}\|(\nabla\Gamma^\beta Y)^\top(\partial_t^2
   - \Delta)\Gamma^\gamma Y)\|_{L^2}\\\nonumber
&+\ \|(\nabla Y)^\top(\partial_t^2 - \Delta)\Gamma^\alpha Y\|_{L^2}
  + \|\nabla(- \Delta)^{-1}\nabla\cdot(\partial_t^2
  - \Delta)\Gamma^\alpha Y\|_{L^2}.
\end{align}
Now we calculate the last term on the right hand side of \eqref{D1}.
Using \eqref{Struc-2}, one has
\begin{align}\label{D2}
\nabla\cdot (\partial_t^2
  - \Delta)\Gamma^\alpha Y
=& -\frac12(\partial_t^2 - \Delta) \sum_{\beta + \gamma =\alpha}C_\alpha^\beta \nabla_i\Gamma^\beta Y_j\nabla_i^\perp\Gamma^\gamma Y_j^\perp\\\nonumber
=& -\frac12 \sum_{\beta + \gamma =\alpha}C_\alpha^\beta \nabla_i(\partial_t^2 - \Delta)\Gamma^\beta Y_j\nabla_i^\perp\Gamma^\gamma Y_j^\perp \\\nonumber
& -\frac12 \sum_{\beta + \gamma =\alpha}C_\alpha^\beta \nabla_i\Gamma^\beta Y_j\nabla_i^\perp(\partial_t^2 - \Delta)\Gamma^\gamma Y_j^\perp \\\nonumber
& -\sum_{\beta + \gamma =\alpha}C_\alpha^\beta \partial_t\nabla_i\Gamma^\beta Y_j\partial_t\nabla_i^\perp\Gamma^\gamma Y_j^\perp \\\nonumber
& +\sum_{\beta + \gamma =\alpha}C_\alpha^\beta \nabla_k\nabla_i\Gamma^\beta Y_j\nabla_k\nabla_i^\perp\Gamma^\gamma Y_j^\perp.
\end{align}
The first two terms on the right hand side of \eqref{D2} involving D'Alembertian
are organized as follows
\begin{equation}\label{D3}
-\frac12 \nabla_i\sum_{\beta + \gamma =\alpha}C_\alpha^\beta(\partial_t^2 - \Delta)\Gamma^\beta Y_j\nabla_i^\perp\Gamma^\gamma Y_j^\perp
-\frac12 \nabla_i^\perp\sum_{\beta + \gamma =\alpha}C_\alpha^\beta \nabla_i\Gamma^\beta Y_j(\partial_t^2 - \Delta)\Gamma^\gamma Y_j^\perp .
\end{equation}
For the last two terms on the right hand
side of \eqref{D2}, they are divided into two groups: $|\beta|\geq |\gamma|$ and  $|\beta|< |\gamma|$.
For the first group, they are written as follows:
\begin{align}\label{D4}
&-\sum_{\beta + \gamma =\alpha,\ |\beta|\geq |\gamma|}C_\alpha^\beta
(\partial_t\nabla_i\Gamma^\beta Y_j\partial_t\nabla_i^\perp\Gamma^\gamma Y_j^\perp
 -\nabla_k\nabla_i\Gamma^\beta Y_j\nabla_k\nabla_i^\perp\Gamma^\gamma Y_j^\perp)\\\nonumber
&=-\nabla_i\sum_{\beta + \gamma =\alpha,\ |\beta|\geq |\gamma|}C_\alpha^\beta
(\partial_t\Gamma^\beta Y_j\partial_t\nabla_i^\perp\Gamma^\gamma Y_j^\perp
 -\nabla_k\Gamma^\beta Y_j\nabla_k\nabla_i^\perp\Gamma^\gamma Y_j^\perp).
\end{align}
The other group of $|\beta|< |\gamma|$ can be organized similarly:
\begin{align}\label{D5}
&-\sum_{\beta + \gamma =\alpha,\ |\beta|< |\gamma|}C_\alpha^\beta
(\partial_t\nabla_i\Gamma^\beta Y_j\partial_t\nabla_i^\perp\Gamma^\gamma Y_j^\perp
 -\nabla_k\nabla_i\Gamma^\beta Y_j\nabla_k\nabla_i^\perp\Gamma^\gamma Y_j^\perp)\\\nonumber
&=-\nabla_i^\perp\sum_{\beta + \gamma =\alpha,\ |\beta|< |\gamma|}C_\alpha^\beta
(\partial_t\nabla_i\Gamma^\beta Y_j\partial_t\Gamma^\gamma Y_j^\perp
 -\nabla_k\nabla_i\Gamma^\beta Y_j\nabla_k\Gamma^\gamma Y_j^\perp).
\end{align}
Inserting \eqref{D3}, \eqref{D4} and \eqref{D5} into
\eqref{D2} to derive that
\begin{align}\label{D6}
&\|\nabla(-\Delta)\nabla\cdot (\partial_t^2
  - \Delta)\Gamma^\alpha Y\|_{L^2}\\[-3mm]\nonumber\\\nonumber
&\lesssim \sum_{\beta + \gamma = \alpha}
\big\|  |\nabla\Gamma^\beta Y| \cdot
|(\partial_t^2 - \Delta)\Gamma^\gamma Y| \big\|_{L^2} \\\nonumber
&\quad +\ \sum_{\beta + \gamma = \alpha,\
  |\beta| \geq |\gamma|}
  \|\nabla (-\Delta)^{-1} \nabla_i
(\partial_t\Gamma^\beta Y_j\partial_t\nabla_i^\perp\Gamma^\gamma Y_j^\perp
 -\nabla_k\Gamma^\beta Y_j\nabla_k\nabla_i^\perp\Gamma^\gamma Y_j^\perp) \|_{L^2} \\\nonumber
&\quad +\ \sum_{\beta + \gamma = \alpha,\
  |\beta| < |\gamma|}
  \|\nabla (-\Delta)^{-1} \nabla_i^\perp
(\partial_t\nabla_i\Gamma^\beta Y_j\partial_t\Gamma^\gamma Y_j^\perp
 -\nabla_k\nabla_i\Gamma^\beta Y_j\nabla_k\Gamma^\gamma Y_j^\perp) \|_{L^2}\\\nonumber
&\leq \|\nabla Y\|_{L^\infty} \big\|(\partial_t^2 - \Delta)\Gamma^\alpha Y\|_{L^2}
+\Pi(\alpha,2).
\end{align}
Here $\Pi(\alpha,2)$ is given in \eqref{D8}-\eqref{D10}.

Now let us insert \eqref{D6} into \eqref{D1} to
derive that
\begin{equation}\label{D7}
\|\nabla\Gamma^\alpha p\|_{L^2} \lesssim \|\nabla
Y\|_{L^\infty}\|(\partial_t^2 - \Delta)\Gamma^\alpha Y\|_{L^2} +
\Pi(\alpha,2).
\end{equation}
Then combined with with \eqref{Elasticity-D}, we obtain
\begin{align}\nonumber
&\|(\partial_t^2 - \Delta)\Gamma^\alpha Y\|_{L^2}\\\nonumber
&\lesssim \|\nabla\Gamma^\alpha p\|_{L^2} + \sum_{\beta + \gamma
  = \alpha}\|(\nabla
  \Gamma^\beta Y)^{\top}(\partial_t^2 - \Delta)\Gamma^\gamma
  Y\|_{L^2}\\\nonumber
&\lesssim \|\nabla Y\|_{L^\infty}
\|(\partial_t^2 - \Delta)\Gamma^\alpha Y\|_{L^2} + \Pi(\alpha,2).
\end{align}
Recall the assumption $\|\nabla Y\|_{L^\infty}\ll 1 $, the first term on the right hand side will be absorbed
by the left hand side. Hence we obtain
\begin{equation} \label{D11}
\|(\partial_t^2 - \Delta)\Gamma^\alpha Y\|_{L^2} \lesssim
\Pi(\alpha,2).
\end{equation}
Inserting
\eqref{D11} into \eqref{D7} gives
\begin{equation}\nonumber
\|\nabla\Gamma^\alpha p\|_{L^2} \lesssim \Pi(\alpha,2).
\end{equation}
This ends the proof of the lemma.
\end{proof}

In the next lemma, we will use Lemma \ref{SN-1} to estimate the main
source of nonlinearities in \eqref{Elasticity-D} by carefully
dealing with the last two terms in \eqref{D8}. On the right hand
sides of those estimates that we are going to prove, we did not gain
derivatives since the both sides are of the same order. However, we
gain temporal decay due to the inherent strong null structure of the system.
\begin{lem}\label{SN-2}
Let $\kappa \geq 7$. There exists $\delta > 0$ such that
if $\mathcal{E}_{\kappa } \leq \delta$, then there holds
\begin{equation}\nonumber
\langle t\rangle\|\nabla\Gamma^{\leq\kappa - 2} p\|_{L^2} + \langle
t\rangle\|(\partial_t^2 - \Delta)\Gamma^{\leq\kappa - 2} Y\|_{L^2}
\lesssim \mathcal{E}_{\kappa}^{\frac{1}{2}}\big(\mathcal{E}_{ \kappa}^{\frac{1}{2}} + \mathcal{X}_{ \kappa}^{\frac{1}{2}}\big).
\end{equation}
\end{lem}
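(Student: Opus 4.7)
The plan is to invoke Lemma \ref{SN-1} to reduce the statement to showing that $\langle t\rangle\, \Pi(\alpha,2)\lesssim \mathcal{E}_\kappa^{1/2}(\mathcal{E}_\kappa^{1/2}+\mathcal{X}_\kappa^{1/2})$ for every multi-index with $|\alpha|\leq \kappa-2$, and then to sum over $|\alpha|$. Each of the three contributions to $\Pi(\alpha,2)$ in \eqref{D8}--\eqref{D10} is a bilinear product in $\Gamma$-derivatives of $Y$, so the overall strategy is to place the lower-order factor in $L^\infty$ via Lemmas \ref{DecayEF}--\ref{DecayES} and the higher-order factor in $L^2$ with the appropriate weight from $\mathcal{E}_\kappa$ or $\mathcal{X}_\kappa$. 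The weight $\langle t\rangle$ is distributed through the pointwise inequality $\langle t\rangle\lesssim \langle r\rangle+\langle t-r\rangle$: the $\langle r\rangle$-piece will combine with the $\langle r\rangle^{-1/2}$ decay of $\partial\Gamma^{\beta}Y$ coming from Lemma \ref{DecayEF}, while the $\langle t-r\rangle$-piece will combine with the weights already present in $\mathcal{X}_\kappa$.

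For the first sum in \eqref{D8} (with $\gamma\neq\alpha$), split according to whether $|\beta|\leq [|\alpha|/2]$ or not. When $|\beta|$ is small, use $\langle r\rangle^{1/2}|\nabla\Gamma^\beta Y|\lesssim \mathcal{E}_\kappa^{1/2}$ and pair with $\|\langle t-r\rangle(\partial_t^2-\Delta)\Gamma^\gamma Y\|_{L^2}\lesssim \mathcal{X}_\kappa^{1/2}$, which holds since $|\gamma|\leq\kappa-2$. Inside the transition strip $\langle t\rangle/3\leq r\leq 5\langle t\rangle/2$, use the sharper light-cone estimate $\langle r\rangle^{1/2}\langle t-r\rangle^{1/2}|\nabla\Gamma^\beta Y|\lesssim \mathcal{E}_\kappa^{1/2}+\mathcal{X}_\kappa^{1/2}$ to absorb the extra $\langle t-r\rangle^{-1/2}$ that appears near the cone. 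When instead $|\gamma|$ is small, apply Lemma \ref{DecayES} to $(\partial_t^2-\Delta)\Gamma^\gamma Y$: away from the light cone one has $\langle t\rangle|(\partial_t^2-\Delta)\Gamma^\gamma Y|\lesssim \mathcal{X}_\kappa^{1/2}$, and near the cone $\langle r\rangle^{1/2}\langle t-r\rangle|(\partial_t^2-\Delta)\Gamma^\gamma Y|\lesssim \mathcal{X}_\kappa^{1/2}+\mathcal{E}_\kappa^{1/2}$, which pair against $\|\nabla\Gamma^\beta Y\|_{L^2}\lesssim \mathcal{E}_\kappa^{1/2}$.

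For $\Pi_1$ and $\Pi_2$, first discard the zero-order Riesz-type operators $\nabla(-\Delta)^{-1}\nabla_i$ and $\nabla(-\Delta)^{-1}\nabla_i^\perp$ by their $L^2$ boundedness, reducing to bilinear pointwise products. In $\Pi_1$ one has $|\beta|\geq|\gamma|$, so the two spatial derivatives sit on the lower-index factor $\Gamma^\gamma Y$, which is controlled in $L^\infty$ via Lemma \ref{DecayES}; the remaining first-order factor $\partial\Gamma^\beta Y$ goes in $L^2$ and contributes $\mathcal{E}_\kappa^{1/2}$. In $\Pi_2$ the role is symmetric ($|\beta|<|\gamma|$), with the second derivative landing on $\Gamma^\beta Y$ and the $L^2$ norm on $\partial\Gamma^\gamma Y$. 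After splitting $\langle t\rangle\lesssim \langle r\rangle+\langle t-r\rangle$ and decomposing the spatial domain as above, the product structure again produces $\mathcal{E}_\kappa^{1/2}(\mathcal{E}_\kappa^{1/2}+\mathcal{X}_\kappa^{1/2})$.

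The principal obstacle is to achieve the full $\langle t\rangle^{-1}$ gain without any logarithmic loss, since the crude inward bound in Lemma \ref{DecayEF} on $r\leq 2\langle t\rangle/3$ carries a factor $\ln^{1/2}(e+t)$. The way around this is to never invoke the logarithmic version: in the interior region one uses $\langle r\rangle^{1/2}|\partial\Gamma^\beta Y|\lesssim \mathcal{E}_\kappa^{1/2}$ together with the $\langle t-r\rangle$-weighted control inside $\mathcal{X}_\kappa$ (since $\langle t-r\rangle\sim\langle t\rangle$ there), and in the cone region one uses the sharper $\langle t-r\rangle^{-1/2}$-enhanced bound, which meshes exactly with the $\langle t-r\rangle^2$ weight of $\mathcal{X}_\kappa$. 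Combined with the derivative-saving placement of second derivatives on the lower-index factor already built into $\Pi_1$ and $\Pi_2$, this delivers the desired estimate after summing over $|\alpha|\leq \kappa-2$.
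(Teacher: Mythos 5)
There is a genuine gap in your treatment of $\Pi_1$ and $\Pi_2$ near the light cone, and it sits exactly where the paper's proof does something you omit. After discarding the Riesz operators you are left with the quadratic form $\partial_t\Gamma^\beta Y_j\,\partial_t\nabla_i^\perp\Gamma^\gamma Y_j^\perp-\nabla_k\Gamma^\beta Y_j\,\nabla_k\nabla_i^\perp\Gamma^\gamma Y_j^\perp$, which you propose to bound by $\|\partial\Gamma^\beta Y\|_{L^2}\,\|\partial\nabla\Gamma^\gamma Y\|_{L^\infty}$ together with Lemma \ref{DecayES}. But in the region $\langle t\rangle/3\le r\le 5\langle t\rangle/2$ that lemma only gives $|\partial^2\Gamma^\gamma Y|\lesssim\langle r\rangle^{-1/2}\langle t-r\rangle^{-1}\big(\mathcal{X}^{1/2}+\mathcal{E}^{1/2}\big)$, so after multiplying by $\langle t\rangle\sim\langle r\rangle$ you are left with an uncontrolled factor $\langle t\rangle^{1/2}\langle t-r\rangle^{-1}$, which is of size $\langle t\rangle^{1/2}$ on the cone itself; the $\langle t-r\rangle$ weights in $\mathcal{X}_\kappa$ cannot rescue this because the $L^2$ factor $\partial\Gamma^\beta Y$ carries no such weight. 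A \emph{generic} bilinear product of this shape only yields $\langle t\rangle^{-1/2}$ decay near the cone. The paper instead uses the identity $\partial_tA\,\partial_tB-\nabla_kA\,\nabla_kB=\partial_tA\,\omega_k(\omega_k\partial_t+\partial_k)B-(\omega_k\partial_t+\nabla_k)A\,\nabla_kB$ so that each term carries a good derivative on one factor, and then applies Lemma \ref{GoodDeri} to trade that good derivative for $\langle t\rangle^{-1}\big(|\partial Y|+|\partial\Gamma Y|+t|(\partial_t^2-\Delta)Y|\big)$. This strong-null-structure step is what upgrades the near-cone decay to $\langle t\rangle^{-3/2}$ in \eqref{F-2}; your observation that the second derivative sits on the lower-index factor addresses derivative counting, not temporal decay, and without the null-form rewriting the estimate does not close.

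A second, related gap concerns the terms $\||\nabla\Gamma^\beta Y||(\partial_t^2-\Delta)\Gamma^\gamma Y|\|_{L^2}$ with $\gamma\neq\alpha$. You try to control $\langle t\rangle(\partial_t^2-\Delta)\Gamma^\gamma Y$ pointwise near the cone by Lemma \ref{DecayES}, which fails by the same $\langle t\rangle^{1/2}\langle t-r\rangle^{-1}$ factor. The paper does not estimate these terms in closed form: it bounds them by $\langle t\rangle^{-1}\mathcal{E}_\kappa^{1/2}\|\langle t\rangle(\partial_t^2-\Delta)\Gamma^{\le\kappa-2}Y\|_{L^2}$ via Sobolev embedding as in \eqref{F-3}, leaves that quantity on the right-hand side of \eqref{F-4}, and only then absorbs it by combining with Lemma \ref{SN-1} and the smallness $\mathcal{E}_\kappa\le\delta$. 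This self-improvement is where the smallness hypothesis of the lemma actually enters; in your proposal it is never used, which is a symptom that the argument is not closing in the way the statement requires.
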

\begin{proof}
In view of Lemma \ref{SN-1}, it suffices to treat 
\eqref{D8} for $|\alpha|\leq \kappa-2$.
We first deal with the integrals away from the light cone. For such purpose,
we choose a radial cutoff function $\varphi \in
C_0^\infty(\mathbb{R}^2)$ which satisfies
\begin{equation}\nonumber
\varphi = \begin{cases}1,\quad {\rm if}\ \frac{3}{4} \leq r \leq \frac{6}{5}\\
0,\quad {\rm if}\ r < \frac{2}{3}\ {\rm or}\ r >
\frac{5}{4}\end{cases},\quad |\nabla\varphi| \leq 100.
\end{equation}
For each fixed $t \geq 4$, let $\varphi^t(y) = \varphi(y/\langle
t\rangle)$. Clearly, one has
$$\varphi^t(y) \equiv 1\ \ {\rm for}\ \frac{3\langle t \rangle}{4} \leq r \leq
\frac{6\langle t \rangle}{5},\quad \varphi^t(y) \equiv 0\ \ {\rm
for}\ r \leq \frac{2\langle t \rangle}{3}\ {\rm or}\ r \geq
\frac{5\langle t \rangle}{4}$$ and $$|\nabla\varphi^t(y)| \leq
100\langle t\rangle^{-1}.$$

It is easy to have the following first step
estimate
\begin{align*}
&\sum_{\beta + \gamma = \alpha,\ |\beta| \geq |\gamma|}
  \|\nabla (-\Delta)^{-1} \nabla_i
\big((1-\varphi^t)(\partial_t\Gamma^\beta Y_j\partial_t\nabla_i^\perp\Gamma^\gamma Y_j^\perp
 -\nabla_k\Gamma^\beta Y_j\nabla_k\nabla_i^\perp\Gamma^\gamma Y_j^\perp) \big) \|_{L^2} \\\nonumber
&+\sum_{\beta + \gamma = \alpha,\
  |\beta| < |\gamma|}
  \|\nabla (-\Delta)^{-1} \nabla_i^\perp
\big((1-\varphi^t)(\partial_t\nabla_i\Gamma^\beta Y_j\partial_t\Gamma^\gamma Y_j^\perp
 -\nabla_k\nabla_i\Gamma^\beta Y_j\nabla_k\Gamma^\gamma Y_j^\perp) \big)\|_{L^2}\\
&\lesssim \sum_{\beta + \gamma = \alpha,\
  |\gamma| \leq [|\alpha|/2]}\|\partial\Gamma^\beta Y\|_{L^2}
  \|1_{{\rm supp}(1 - \varphi^t)}\nabla\partial\Gamma^\gamma Y\|_{L^\infty}.
\end{align*}
Using Lemma \ref{DecayES}, the above is bounded by
\begin{equation}\nonumber
\langle t\rangle^{-1}\mathcal{E}_{ |\alpha| +
1}^{\frac{1}{2}}\mathcal{X}_{[|\alpha|/2] + 4}^{\frac{1}{2}}.
\end{equation}
Hence the estimate \eqref{D8} is improved to be:
\begin{align}\label{F-1}
&\Pi(\alpha, 2) \lesssim \langle
  t\rangle^{-1}\mathcal{E}_{ |\alpha| +
  1}^{\frac{1}{2}}\mathcal{X}_{[|\alpha|/2]
  + 4}^{\frac{1}{2}}\\[-4mm]\nonumber\\\nonumber
&\quad +\ \sum_{\beta + \gamma = \alpha,\
  \gamma \neq \alpha}\big\||\nabla\Gamma^\beta Y||(\partial_t^2
   - \Delta)\Gamma^\gamma Y|\big\|_{L^2}\\\nonumber
&\quad +\sum_{\beta + \gamma = \alpha,\ |\beta| \geq |\gamma|}
\Pi_1(\varphi^t) +
\sum_{\beta + \gamma = \alpha,\  |\beta| < |\gamma|}\Pi_2(\varphi^t)\big),
\end{align}
where
\begin{align*}
& \Pi_1(\varphi^t)=\|\nabla (-\Delta)^{-1} \nabla_i
(\varphi^t\partial_t\Gamma^\beta Y_j\partial_t\nabla_i^\perp\Gamma^\gamma Y_j^\perp
 -\varphi^t\nabla_k\Gamma^\beta Y_j\nabla_k\nabla_i^\perp\Gamma^\gamma Y_j^\perp) \|_{L^2},
 \nonumber\\[-4mm]\\\nonumber
& \Pi_2(\varphi^t)=\|\nabla (-\Delta)^{-1} \nabla_i^\perp
(\varphi^t\partial_t\nabla_i\Gamma^\beta Y_j\partial_t\Gamma^\gamma Y_j^\perp
 -\varphi^t\nabla_k\nabla_i\Gamma^\beta Y_j\nabla_k\Gamma^\gamma Y_j^\perp) \|_{L^2}.
\end{align*}
Due to the similarity of $\Pi_1(\varphi^t)$ and $\Pi_2(\varphi^t)$, we only deal with the first term.

To treat $\Pi_1(\varphi^t)$, the main idea is to expect to obtain maximal decay in time.
Hence we need to rewrite $\Pi_1(\varphi^t)$ in a different form to show the strong null structure:
\begin{align*}
&\nabla_i(\varphi^t\partial_t\Gamma^\beta Y_j\partial_t\nabla_i^\perp\Gamma^\gamma Y_j^\perp
 -\varphi^t\nabla_k\Gamma^\beta Y_j\nabla_k\nabla_i^\perp\Gamma^\gamma Y_j^\perp) \\[-4mm]\\
&=\nabla_i(\varphi^t\partial_t\Gamma^\beta Y_j \omega_k(\omega_k\partial_t+\partial_k)\nabla_i^\perp\Gamma^\gamma Y_j^\perp) \\[-4mm]\\
&\quad-\nabla_i^\perp(\nabla_i(\varphi^t(\omega_k\partial_t+\nabla_k)\Gamma^\beta Y_j)\nabla_k\Gamma^\gamma Y_j^\perp) \\[-4mm]\\
&=\nabla_i(\varphi^t\partial_t\Gamma^\beta Y_j \omega_k(\omega_k\partial_t+\partial_k)\nabla_i^\perp\Gamma^\gamma Y_j^\perp) \\[-4mm]\\
&\quad-\nabla_i^\perp(\varphi^t\nabla_i(\omega_k\partial_t+\nabla_k)\Gamma^\beta Y_j\nabla_k\Gamma^\gamma Y_j^\perp) \\[-4mm]\\
&\quad-\nabla_i^\perp(\nabla_i\varphi^t(\omega_k\partial_t+\nabla_k)\Gamma^\beta Y_j\nabla_k\Gamma^\gamma Y_j^\perp).
\end{align*}
Hence by Lemma \ref{GoodDeri}, $\Pi_1(\varphi^t)$ can be estimated as follows:
\begin{align*}
\Pi_1(\varphi^t)&
=\|\nabla (-\Delta)^{-1} \nabla_i
(\varphi^t\partial_t\Gamma^\beta Y_j\partial_t\nabla_i^\perp\Gamma^\gamma Y_j^\perp
 -\varphi^t\nabla_k\Gamma^\beta Y_j\nabla_k\nabla_i^\perp\Gamma^\gamma Y_j^\perp) \|_{L^2} \\[-4mm]\\
&\lesssim
  \|\varphi^t\partial_t\Gamma^\beta Y_j \omega_k(\omega_k\partial_t+\partial_k)\nabla_i^\perp\Gamma^\gamma Y_j^\perp \|_{L^2} \\[-4mm]\\
&\quad
  +\| \varphi^t\nabla_i(\omega_k\partial_t+\nabla_k)\Gamma^\beta Y_j\nabla_k\Gamma^\gamma Y_j^\perp \|_{L^2} \\[-4mm]\\
&\quad
  +\| \nabla_i\varphi^t(\omega_k\partial_t+\nabla_k)\Gamma^\beta Y_j\nabla_k\Gamma^\gamma Y_j^\perp \|_{L^2}\\[-4mm]\\
&\lesssim
\langle t\rangle^{-1}\|\partial\Gamma^\beta Y\|_{L^2}\big(
  \|1_{{\rm supp}(\varphi^t)} \langle t\rangle (\partial_t^2 - \Delta)
  \Gamma^\gamma Y\|_{L^\infty} +\|1_{{\rm supp}(\varphi^t)} \partial\Gamma^{
  \leq|\gamma| + 1} Y\|_{L^\infty}\big)\\[-4mm]\\
&\quad +\ \langle t\rangle^{-1}\|1_{{\rm supp}(\varphi^t)} \partial\Gamma^\gamma Y\|_{L^\infty}
  \big(\|\langle t\rangle(\partial_t^2- \Delta)\Gamma^\beta Y\|_{L^2} + \|\partial\Gamma^{\leq|\beta| + 1} Y\|_{L^2}\big)\\[-4mm]\\
&\quad +\ \langle t\rangle^{-1}\|1_{{\rm supp}(\varphi^t)} \partial\Gamma^\gamma Y\|_{L^\infty}\|
  \partial\Gamma^\beta Y\|_{L^2}.
\end{align*}
Consequently, note $|\gamma| \leq [|\alpha|/2]$ and using
\eqref{C-1} in Lemma \ref{DecayEF},
one has
\begin{align}\label{F-2}
&\sum_{\beta + \gamma = \alpha,
  |\beta|\geq |\gamma|}\Pi_1(\varphi^t)
  \lesssim \langle
  t\rangle^{-\frac{3}{2}}\mathcal{E}_{
  [|\alpha|/2] + 4}^{\frac{1}{2}}\mathcal{E}_{
  |\alpha| + 2}^{\frac{1}{2}}\\\nonumber
&\quad\quad\quad\quad\quad\quad +\ \langle t
  \rangle^{-\frac{3}{2}}\mathcal{E}_{
  |\alpha| + 1}^{\frac{1}{2}}\|\langle t\rangle (\partial_t^2 - \Delta)
  \Gamma^{\leq[|\alpha|/2] + 2} Y\|_{L^2}\\\nonumber
&\quad\quad\quad\quad\quad\quad +\ \langle
t\rangle^{-\frac{3}{2}}\mathcal{E}_{
  [|\alpha|/2] + 3}^{\frac{1}{2}}
  \|\langle t\rangle(\partial_t^2
  - \Delta)\Gamma^{\leq|\alpha|} Y\|_{L^2}.
\end{align}

It remains to estimate the second line of \eqref{F-1}. Using the Sobolev
imbedding $H^2 \hookrightarrow L^\infty$, we have
\begin{align}\label{F-3}
&\sum_{\beta + \gamma = \alpha,\
  \gamma \neq \alpha}\big\||\nabla\Gamma^\beta Y||(\partial_t^2
   - \Delta)\Gamma^\gamma Y|\big\|_{L^2}\\\nonumber
&\lesssim \sum_{\beta + \gamma = \alpha,\
  |\gamma| \leq [|\alpha|/2]}\|\nabla\Gamma^\beta Y\|_{L^2}\|(\partial_t^2
   - \Delta)\Gamma^\gamma Y\|_{L^\infty}\\\nonumber
&\quad +\ \sum_{\beta + \gamma = \alpha,\
  [|\alpha|/2] < |\gamma| < |\alpha|}\|\nabla\Gamma^\beta Y\|_{L^\infty}\|(\partial_t^2
   - \Delta)\Gamma^\gamma Y\|_{L^2}\\\nonumber
&\lesssim \langle t\rangle^{-1}\mathcal{E}_{
  |\alpha| + 1}^{\frac{1}{2}}\|\langle t\rangle(\partial_t^2
   - \Delta)\Gamma^{\leq[|\alpha|/2] + 2} Y\|_{L^2}\\\nonumber
&\quad +\ \langle t\rangle^{-1}\mathcal{E}_{
  [|\alpha|/2] + 3}^{\frac{1}{2}}\|\langle t\rangle(\partial_t^2
   - \Delta)\Gamma^{\leq|\alpha| - 1} Y\|_{L^2}.
\end{align}
Inserting \eqref{F-2} and \eqref{F-3}  into \eqref{F-1}.
Note that similar estimate also holds for $\Pi_2(\varphi^t)$,
we obtain
that
\begin{align}\label{F-4}
\Pi(\alpha,2) \lesssim
&\langle
  t\rangle^{-1}\mathcal{E}_{ |\alpha| +
  2}^{\frac{1}{2}}\big(\mathcal{E}_{[|\alpha|/2]
  + 4}^{\frac{1}{2}} + \mathcal{X}_{[|\alpha|/2]
  + 4}^{\frac{1}{2}}\big)\\\nonumber
&+\ \langle t\rangle^{-1}\mathcal{E}_{
  |\alpha| + 1}^{\frac{1}{2}}\|\langle t\rangle(\partial_t^2
   - \Delta)\Gamma^{\leq[|\alpha|/2] + 2} Y\|_{L^2}\\\nonumber
&+\ \langle t\rangle^{-1}\mathcal{E}_{
  [|\alpha|/2] + 3}^{\frac{1}{2}}\|\langle t\rangle(\partial_t^2
   - \Delta)\Gamma^{\leq|\alpha|} Y\|_{L^2}.
\end{align}
Now for $\kappa \geq 7$ and  $|\alpha| \leq \kappa - 2$, there holds $[|\alpha|/2] + 4
\leq \kappa $. Hence, one can derive from \eqref{F-4} that
\begin{align}\nonumber
\Pi(\alpha,2) \lesssim
&\langle t\rangle^{- 1}\mathcal{E}_{
  \kappa}^{\frac{1}{2}}\big(\mathcal{E}_{
  \kappa }^{\frac{1}{2}} + \mathcal{X}_{
  \kappa }^{\frac{1}{2}}\big)\\\nonumber
&+\langle t\rangle^{-1}\mathcal{E}_{
  \kappa}^{\frac{1}{2}}\|\langle t\rangle(\partial_t^2
   - \Delta)\Gamma^{\leq\kappa - 2} Y\|_{L^2}.
\end{align}
Combined with Lemma \ref{SN-1} gives the
estimate in the lemma.
\end{proof}

Now we are ready  to show that the weighted $L^2$ norm $\mathcal{X}_{\kappa}$ can
be bounded by the weighted energy $\mathcal{E}_{\kappa }$ under some smallness assumption.
\begin{lem}\label{WE-2}
Let $\kappa \geq 7$. Assume there exists $\delta > 0$ such that
if $\mathcal{E}_{\kappa } \leq \delta$, then there holds
\begin{equation}\nonumber
\mathcal{X}_{\kappa} \lesssim \mathcal{E}_{\kappa}.
\end{equation}
\end{lem}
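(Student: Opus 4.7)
The proof should be a short bootstrap/absorption argument that chains together the two preparatory lemmas already at hand: Lemma \ref{WE-1} and Lemma \ref{SN-2}. The plan is to reduce the weighted norm $\mathcal{X}_\kappa$ to $\mathcal{E}_\kappa$ plus a weighted $L^2$ norm of the D'Alembertian, then use the strong-null-form estimate to dominate that D'Alembertian norm by $\mathcal{E}_\kappa^{1/2}(\mathcal{E}_\kappa^{1/2}+\mathcal{X}_\kappa^{1/2})$, and finally absorb the $\mathcal{X}_\kappa$ contribution on the right using the smallness hypothesis $\mathcal{E}_\kappa \leq \delta$.

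Concretely, for each multi-index $|\alpha| \leq \kappa-2$ I would apply Lemma \ref{WE-1} with $Y$ replaced by $\Gamma^\alpha Y$. The commutation identity \eqref{Elasticity-D} guarantees that $\Gamma^\alpha Y$ satisfies a wave equation of the same structural form, so Lemma \ref{WE-1} yields
\begin{equation*}
\sum_{|\alpha|\leq \kappa-2}\Big(\int_{r\leq 2\langle t\rangle}\langle t-r\rangle^2|\partial^2\Gamma^\alpha Y|^2\,dy+\int_{r>2\langle t\rangle}\langle t\rangle^2|\partial^2\Gamma^\alpha Y|^2\,dy+\int_{\mathbb{R}^2}\langle t-r\rangle^2|\partial\nabla\Gamma^\alpha Y|^2\,dy\Big)\lesssim \mathcal{E}_\kappa+\langle t\rangle^2\|(\partial_t^2-\Delta)\Gamma^{\leq\kappa-2}Y\|_{L^2}^2.
\end{equation*}
That is, $\mathcal{X}_\kappa \lesssim \mathcal{E}_\kappa+\langle t\rangle^2\|(\partial_t^2-\Delta)\Gamma^{\leq\kappa-2}Y\|_{L^2}^2$.

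Next, I would invoke Lemma \ref{SN-2}, which (under the same smallness hypothesis $\mathcal{E}_\kappa\leq\delta$) provides
\begin{equation*}
\langle t\rangle^2\|(\partial_t^2-\Delta)\Gamma^{\leq\kappa-2}Y\|_{L^2}^2 \lesssim \mathcal{E}_\kappa\bigl(\mathcal{E}_\kappa+\mathcal{X}_\kappa\bigr).
\end{equation*}
Combining the two estimates gives $\mathcal{X}_\kappa \lesssim \mathcal{E}_\kappa+\mathcal{E}_\kappa^2+\mathcal{E}_\kappa\mathcal{X}_\kappa$. Since $\mathcal{E}_\kappa\leq\delta\ll 1$, the final term $\mathcal{E}_\kappa\mathcal{X}_\kappa$ can be absorbed into the left-hand side, and $\mathcal{E}_\kappa^2\leq \mathcal{E}_\kappa$ is dominated by the first term, leaving $\mathcal{X}_\kappa \lesssim \mathcal{E}_\kappa$.

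There is essentially no new obstacle here; the entire content of the lemma is the absorption step, which requires only that the implicit constant times $\mathcal{E}_\kappa$ remain strictly less than one. The only mild subtlety is to notice that Lemma \ref{WE-1} as stated is applied at the base level, and one must apply it to each $\Gamma^\alpha Y$ separately and then sum, which is legitimate because \eqref{Elasticity-D} shows the commuted unknowns again satisfy wave equations with D'Alembertian source controlled by the right-hand side of Lemma \ref{SN-2}.
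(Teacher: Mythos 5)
Your proposal is correct and follows exactly the paper's argument: apply Lemma \ref{WE-1} to each $\Gamma^\alpha Y$ (justified by the commuted equation \eqref{Elasticity-D}), bound the resulting D'Alembertian term via Lemma \ref{SN-2}, and absorb the $\mathcal{E}_\kappa\mathcal{X}_\kappa$ contribution using $\mathcal{E}_\kappa\ll 1$. No discrepancies.
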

\begin{proof}
Applying Lemma \ref{WE-1} and Lemma \ref{SN-2}, one has
\begin{align}\nonumber
\mathcal{X}_{\kappa}
&\lesssim \mathcal{E}_{
  \kappa}+ \langle t\rangle^2 \|(\partial_t^2 -\Delta)\Gamma^{\leq\kappa - 2}Y\|^2_{L^2}\\\nonumber
&\lesssim \mathcal{E}_{\kappa}+
  \mathcal{E}_{\kappa}\big(\mathcal{E}_{\kappa } + \mathcal{X}_{\kappa }\big).
\end{align}
Then the lemma follows from the assumption $\mathcal{E}_{\kappa}\ll 1$.
\end{proof}

\begin{lem}\label{WE-3}
Let $\kappa \geq 7$. Assume $\mathcal{E}_{\kappa } \ll 1$, then there holds
\begin{align}\nonumber
\langle t\rangle \|(\partial_t^2 -\Delta)\Gamma^{\leq\kappa - 2}Y\|_{L^2}
\lesssim \mathcal{E}_{\kappa},
\quad
\mathcal{Z}_{\kappa}\lesssim \mathcal{E}_{\kappa}.
\end{align}
\end{lem}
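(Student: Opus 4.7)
The plan is to derive both inequalities of Lemma~\ref{WE-3} as direct corollaries of the previously established lemmas, with the key inputs being Lemma~\ref{SN-2} (sharp bound for the D'Alembertian in $L^2$), Lemma~\ref{WE-2} ($\mathcal{X}_\kappa \lesssim \mathcal{E}_\kappa$), and Lemma~\ref{WE-1} (extended from $\kappa = 2$ to general $\kappa$ by applying it to $\Gamma^\alpha Y$ for each $|\alpha| \le \kappa - 2$).

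First, for the bound on $\langle t \rangle \|(\partial_t^2 - \Delta)\Gamma^{\leq \kappa - 2} Y\|_{L^2}$, I invoke Lemma~\ref{SN-2}, which gives
\begin{equation*}
\langle t\rangle\|(\partial_t^2 - \Delta)\Gamma^{\leq\kappa - 2} Y\|_{L^2}
\lesssim \mathcal{E}_{\kappa}^{\frac{1}{2}}\bigl(\mathcal{E}_{\kappa}^{\frac{1}{2}} + \mathcal{X}_{\kappa}^{\frac{1}{2}}\bigr).
\end{equation*}
Since Lemma~\ref{WE-2} gives $\mathcal{X}_\kappa \lesssim \mathcal{E}_\kappa$ under the smallness assumption $\mathcal{E}_\kappa \leq \delta \ll 1$, the right-hand side collapses to $\mathcal{E}_\kappa^{\frac12}\cdot \mathcal{E}_\kappa^{\frac12} = \mathcal{E}_\kappa$, yielding the first inequality.

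For the bound on $\mathcal{Z}_\kappa$, I first note that Lemma~\ref{WE-1} is stated only for $\kappa = 2$ (i.e., for $Y$ itself), but the Klainerman–Sideris-type argument of Lemma~2.3 in \cite{KS96} applies verbatim to any vector field. Applying it to $\Gamma^\alpha Y$ for each $|\alpha| \leq \kappa - 2$ and summing yields the natural generalization
\begin{equation*}
\mathcal{Z}_\kappa \lesssim \mathcal{E}_\kappa + \sum_{|\alpha| \leq \kappa - 2}\langle t\rangle^2 \|(\partial_t^2 - \Delta)\Gamma^\alpha Y\|_{L^2}^2.
\end{equation*}
Combining with the first inequality just proved gives $\mathcal{Z}_\kappa \lesssim \mathcal{E}_\kappa + \mathcal{E}_\kappa^2$, and then the smallness $\mathcal{E}_\kappa \ll 1$ absorbs the quadratic term, producing $\mathcal{Z}_\kappa \lesssim \mathcal{E}_\kappa$.

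There is no substantive obstacle here: the work has essentially been done in Lemmas~\ref{SN-1}–\ref{WE-2}, where the strong null structure and the div-curl decomposition were exploited to extract the extra temporal decay and control the pressure. The only cosmetic point needing mention is the extension of Lemma~\ref{WE-1} to general $\kappa$, which is immediate because the weighted-$L^2$ identity does not distinguish between $Y$ and its generalized derivatives $\Gamma^\alpha Y$. The $L^2$ boundedness of the zero-order operator $P(\nabla)$ then transfers the bound for $\mathcal{X}_\kappa$-type quantities to $\mathcal{Z}_\kappa$, exactly as in the proof of Lemma~\ref{WE-1}.
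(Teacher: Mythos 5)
Your proof is correct and follows essentially the same route as the paper: the first inequality is Lemma~\ref{SN-2} combined with $\mathcal{X}_\kappa \lesssim \mathcal{E}_\kappa$ from Lemma~\ref{WE-2}, and the second follows from the (generalized) Lemma~\ref{WE-1} together with the first inequality. Your remark about extending Lemma~\ref{WE-1} to $\Gamma^\alpha Y$ for $|\alpha|\le\kappa-2$ is exactly how the paper implicitly uses it.
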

\begin{proof}
By Lemma \ref{SN-2} and Lemma \ref{WE-2}, there holds
\begin{align}\nonumber
\langle t\rangle \|(\partial_t^2 -\Delta)\Gamma^{\leq\kappa - 2}Y\|_{L^2}
\lesssim \mathcal{E}_{\kappa}^{\frac12}\big(\mathcal{E}_{\kappa } + \mathcal{X}_{\kappa }\big)^{\frac12}
\lesssim \mathcal{E}_{\kappa}.
\end{align}
The second inequality follows from Lemma \ref{WE-1} and the first inequality.
\end{proof}

\section{The first derivative estimate of D'Alembertian}
In this section, we show that with one more spatial derivative,
the D'Alembertian has better temporal decay.
In view of Lemma \ref{SN-1}, the estimate of D'Alembertian is carried out with the help of
the estimate of pressure.
Here, similar procedure is applied to the first derivative estimate of the pressure and
D'Alembertian.

\begin{lem}\label{SND-1}
Suppose $\|\nabla Y\|_{L^\infty} \leq \delta$ for some
absolutely positive constant $\delta \ll 1$. Then there holds
\begin{align}\nonumber
\|\nabla^2\Gamma^\alpha p\|_{L^2}
+ \|\nabla(\partial_t^2 -\Delta)\Gamma^\alpha Y\|_{L^2}
\lesssim \Phi(\alpha, 3),
\end{align}
where
\begin{align}\label{F10}
\Phi(\alpha, 3)
=& \sum_{\beta + \gamma = \alpha,\
  \gamma \neq \alpha}\big\||\nabla\Gamma^\beta Y||\nabla(\partial_t^2
   - \Delta)\Gamma^\gamma Y|\big\|_{L^2}\\\nonumber
&+\sum_{\beta + \gamma = \alpha}
 \big\| |\nabla^2\Gamma^\beta Y|\cdot|(\partial_t^2- \Delta)\Gamma^\gamma Y| \big\|_{L^2}
  \\\nonumber
&+\sum_{\beta + \gamma = \alpha}
  \big\| |\partial\nabla\Gamma^\beta Y| \cdot|(\omega_k\partial_t+\nabla_k)\nabla\Gamma^\gamma Y|\big\|_{L^2}.
\end{align}
\end{lem}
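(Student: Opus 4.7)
The plan is to mirror the derivation of Lemma~\ref{SN-1}, but carry one additional spatial derivative through the argument, relying on two structural facts: the algebraic identity $\nabla_i\nabla_i^\perp \equiv 0$ in two dimensions (already used in~\eqref{D2} to put $\nabla\cdot(\partial_t^2-\Delta)\Gamma^\alpha Y$ into divergence form), and the null decomposition
\[
\partial_t A\,\partial_t B - \nabla_k A\,\nabla_k B = \omega_k\partial_t A\,(\omega_k\partial_t+\nabla_k)B - (\omega_k\partial_t+\nabla_k)A\,\nabla_k B,
\]
which already appears in the proof of Lemma~\ref{SN-2}.

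First I would apply $\nabla^2(-\Delta)^{-1}\nabla\cdot$ to \eqref{Elasticity-D} to obtain
\[
\nabla^2\Gamma^\alpha p = -\nabla^2(-\Delta)^{-1}\nabla\cdot(\partial_t^2-\Delta)\Gamma^\alpha Y - \nabla^2(-\Delta)^{-1}\nabla\cdot\Big[\sum_{\beta+\gamma=\alpha}C_\alpha^\beta(\nabla\Gamma^\beta Y)^{\top}(\partial_t^2-\Delta)\Gamma^\gamma Y\Big].
\]
Separating the $\gamma=\alpha$ piece of the last sum yields a term $\|\nabla Y\|_{L^\infty}\|\nabla(\partial_t^2-\Delta)\Gamma^\alpha Y\|_{L^2}$ to be absorbed at the end; for $\gamma\ne\alpha$, the $L^2$-boundedness of $\nabla^2(-\Delta)^{-1}$ combined with one Leibniz expansion gives exactly the first two sums in~\eqref{F10}.

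The crucial step is the remaining term $\nabla^2(-\Delta)^{-1}\nabla\cdot(\partial_t^2-\Delta)\Gamma^\alpha Y$. Substituting \eqref{Struc-2} and commuting $(\partial_t^2-\Delta)$ through the product as in~\eqref{D2} produces four families of quadratic pieces. In the two families containing a D'Alembertian on one factor, the identity $\nabla_i\nabla_i^\perp=0$ lets us write the expression as $\nabla_i$ (or $\nabla_i^\perp$) of a product of one D'Alembertian and one first derivative; applying the bounded first-order Riesz multiplier $\nabla^2(-\Delta)^{-1}\nabla_i$ and distributing the remaining derivative by Leibniz contributes to the first two sums in~\eqref{F10}. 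In the two genuine null-form families, the same identity allows us to extract an outer $\nabla_i$ (or $\nabla_i^\perp$) from the bilinear; after pulling it into the Riesz operator and performing one product-rule step, I would apply the null decomposition above with $A$ and $B$ being first derivatives of the appropriate $\Gamma^\bullet Y$, thereby producing exactly the pointwise quantity $|\partial\nabla\Gamma^\beta Y|\,|(\omega_k\partial_t+\nabla_k)\nabla\Gamma^\gamma Y|$ that forms the third sum in~\eqref{F10}.

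Collecting everything, this yields
\[
\|\nabla^2\Gamma^\alpha p\|_{L^2} \lesssim \|\nabla Y\|_{L^\infty}\|\nabla(\partial_t^2-\Delta)\Gamma^\alpha Y\|_{L^2} + \Phi(\alpha,3),
\]
after which differentiating \eqref{Elasticity-D} once spatially and taking $L^2$ norms bounds $\|\nabla(\partial_t^2-\Delta)\Gamma^\alpha Y\|_{L^2}$ by $\|\nabla^2\Gamma^\alpha p\|_{L^2}$ plus terms already present in $\Phi(\alpha,3)$; the smallness $\delta\ll 1$ then absorbs the offending factor to the left. The main obstacle I anticipate is the bookkeeping: after the outer $\nabla_i$ is distributed across a null-form bilinear by Leibniz, one must verify that every residual term with a third derivative on a single factor either cancels via $\nabla_i\nabla_i^\perp=\nabla_i^\perp\nabla_i=0$ or can be re-indexed against the $\beta+\gamma=\alpha$ sum to fit into the three sums of~\eqref{F10}; the split $|\beta|\ge|\gamma|$ vs.\ $|\beta|<|\gamma|$ of~\eqref{D4}--\eqref{D5} has to be tracked carefully so that in both regimes the extra derivative lands on the larger factor and the good-derivative structure is preserved.
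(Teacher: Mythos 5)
Your proposal is correct and follows essentially the same route as the paper: apply $\nabla^2(-\Delta)^{-1}\nabla\cdot$ to \eqref{Elasticity-D}, substitute \eqref{Struc-2} into $\nabla\cdot(\partial_t^2-\Delta)\Gamma^\alpha Y$, bound the D'Alembertian families by the first two sums of \eqref{F10}, apply the null decomposition to the remaining quadratic-in-second-derivatives pieces to get the third sum, and absorb $\|\nabla Y\|_{L^\infty}\|\nabla(\partial_t^2-\Delta)\Gamma^\alpha Y\|_{L^2}$ by smallness. The only (harmless) detour is your extraction-and-redistribution of the outer $\nabla_i$ and your concern about the $|\beta|\ge|\gamma|$ split: since $\nabla^2(-\Delta)^{-1}$ is already zero-order, the paper applies the null decomposition \eqref{F3} directly to the second-derivative bilinear with no derivative extraction and no case split.
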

\begin{proof}
We begin by rewriting
\eqref{Elasticity-D} as follows:
\begin{align}\nonumber
- \nabla\Gamma^\alpha p = (\partial_t^2 - \Delta)
\Gamma^\alpha Y + \sum_{\beta + \gamma = \alpha}C_\alpha^\beta(\nabla \Gamma^\beta Y)^{\top}(\partial_t^2 -
\Delta)\Gamma^\gamma Y.
\end{align}
Applying the  operator $\nabla^2(- \Delta)^{-1}\nabla\cdot$ to the above system, we obtain
\begin{align}\nonumber
\nabla^2\Gamma^\alpha p
=& \sum_{\beta + \gamma = \alpha,\
  \gamma \neq \alpha}\nabla^2(- \Delta)^{-1}\nabla\cdot[C_\alpha^\beta(\nabla\Gamma^\beta Y)^\top(\partial_t^2
   - \Delta)\Gamma^\gamma Y]\\\nonumber
&+\ \nabla^2(- \Delta)^{-1}\nabla\cdot[(\nabla Y)^\top(\partial_t^2 -
  \Delta)\Gamma^\alpha Y]\\[-4mm]\nonumber\\\nonumber
&+\ \nabla^2(- \Delta)^{-1}\nabla\cdot(\partial_t^2 -
  \Delta)\Gamma^\alpha Y.
\end{align}
Using the boundedness of the Riesz transform in $L^2$ norm,
one has
\begin{align}\label{F1}
\|\nabla^2\Gamma^\alpha p\|_{L^2}
\lesssim
& \sum_{\beta + \gamma = \alpha,\
  \gamma \neq \alpha}\big\||\nabla\Gamma^\beta Y||\nabla(\partial_t^2
   - \Delta)\Gamma^\gamma Y|\big\|_{L^2}\\\nonumber
&+\sum_{\beta + \gamma = \alpha}
 \big\| |\nabla^2\Gamma^\beta Y|\cdot|(\partial_t^2- \Delta)\Gamma^\gamma Y| \big\|_{L^2}
  \\\nonumber
&+\ \|\nabla Y\|_{L^\infty}\|\nabla(\partial_t^2 - \Delta)\Gamma^\alpha Y\|_{L^2}
  + \|\nabla\cdot(\partial_t^2- \Delta)\Gamma^\alpha Y\|_{L^2}.
\end{align}
Now we calculate the last term on the right hand side of \eqref{F1}.
Using \eqref{Struc-2}, we compute that
\begin{align}\label{F2}
\nabla\cdot (\partial_t^2
  - \Delta)\Gamma^\alpha Y
=& -\frac12 \sum_{\beta + \gamma =\alpha}C_\alpha^\beta \nabla_i(\partial_t^2 - \Delta)\Gamma^\beta Y_j\nabla_i^\perp\Gamma^\gamma Y_j^\perp \\\nonumber
& -\frac12 \sum_{\beta + \gamma =\alpha}C_\alpha^\beta \nabla_i\Gamma^\beta Y_j\nabla_i^\perp(\partial_t^2 - \Delta)\Gamma^\gamma Y_j^\perp \\\nonumber
&-\sum_{\beta + \gamma =\alpha}C_\alpha^\beta \partial_t\nabla_i\Gamma^\beta Y_j\partial_t\nabla_i^\perp\Gamma^\gamma Y_j^\perp \\\nonumber
&+\sum_{\beta + \gamma =\alpha}C_\alpha^\beta \nabla_i\nabla_k\Gamma^\beta Y_j\nabla_i^\perp\nabla_k\Gamma^\gamma Y_j^\perp.
\end{align}
The last two terms of \eqref{F2} are organized as follows:
\begin{align}\label{F3}
&-\partial_t\nabla_i\Gamma^\beta Y_j\partial_t\nabla_i^\perp\Gamma^\gamma Y_j^\perp
  +\nabla_k  \nabla_i\Gamma^\beta Y_j  \nabla_k\nabla_i^\perp\Gamma^\gamma Y_j^\perp\\[-4mm]\nonumber\\
&=-\omega_k(\omega_k\partial_t+\nabla_k)\nabla_i\Gamma^\beta Y_j\partial_t\nabla_i^\perp\Gamma^\gamma Y_j^\perp
+\nabla_k  \nabla_i\Gamma^\beta Y_j  (\omega_k\partial_t+\nabla_k)\nabla_i^\perp\Gamma^\gamma Y_j^\perp.\nonumber
\end{align}
Inserting \eqref{F3} into \eqref{F2} gives that
\begin{align}\label{F4}
&\|\nabla\cdot (\partial_t^2 - \Delta)\Gamma^\alpha Y\|_{L^2}\\[-4mm]\nonumber\\\nonumber
&\lesssim \sum_{\beta + \gamma = \alpha}
\big\|  |\nabla\Gamma^\beta Y| \cdot |\nabla(\partial_t^2 - \Delta)\Gamma^\gamma Y|\big\|_{L^2} \\\nonumber
&\quad +\sum_{\beta + \gamma = \alpha}
  \big\| |\partial\nabla\Gamma^\beta Y|\cdot |(\omega_k\partial_t+\nabla_k)\nabla\Gamma^\gamma Y| \big\|_{L^2}.
\end{align}
Now let us insert \eqref{F4} into \eqref{F1} to
derive that
\begin{align}\label{F5}
\|\nabla^2\Gamma^\alpha p\|_{L^2}
\lesssim
\|\nabla Y\|_{L^\infty}\|\nabla(\partial_t^2 - \Delta)\Gamma^\alpha Y\|_{L^2}
+\Phi(\alpha,3),
\end{align}
where $\Phi(\alpha,3)$ is given by \eqref{F10}.
Using \eqref{Elasticity-D} and \eqref{F5}, one has
\begin{align*}
\|\nabla(\partial_t^2-\Delta)\Gamma^\alpha Y\|_{L^2}
\lesssim \|\nabla Y\|_{L^\infty}\|\nabla(\partial_t^2 - \Delta)\Gamma^\alpha Y\|_{L^2}
+\Phi(\alpha,3).
\end{align*}
Recall the assumption that $\|\nabla Y\|_{L^\infty}$ is appropriately smaller
than an absolute positive constant $\delta \ll 1$.
Then the first term on the right hand is absorbed by the left hand side.
This clearly yields
\begin{align*}
\|\nabla(\partial_t^2-\Delta)\Gamma^\alpha Y\|_{L^2}
\lesssim \Phi(\alpha,3).
\end{align*}
Inserting the above into \eqref{F5}, one has
\begin{align*}
\|\nabla^2\Gamma^\alpha p\|_{L^2} \lesssim \Phi(\alpha,3).
\end{align*}
This ends the proof of the lemma.
\end{proof}

Now we show the enhanced temporal decay estimate of
the first derivative of D'Alembertian.
\begin{lem}\label{Box3}
Let $\kappa \geq 7$. Assume $\mathcal{E}_{\kappa } \ll 1$, then there holds
\begin{align*}
\langle t\rangle^{\frac 32}\| \nabla (\partial_t^2-\Delta) \Gamma^{\leq\kappa-3} Y\|_{L^2}
\lesssim \mathcal{E}_{\kappa}.
\end{align*}
\end{lem}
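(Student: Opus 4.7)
The plan is to follow the structure of the proof of Lemma~\ref{SN-2}, now adapted to the first-derivative estimate of D'Alembertian provided by Lemma~\ref{SND-1}. First, apply Lemma~\ref{SND-1} to reduce the bound to showing that $\Phi(\alpha, 3) \lesssim \langle t\rangle^{-3/2}\mathcal{E}_\kappa$ for each $|\alpha| \le \kappa - 3$. The quantity $\Phi(\alpha, 3)$ from \eqref{F10} consists of three types of bilinear terms: one of the form $|\nabla\Gamma^\beta Y||\nabla(\partial_t^2 - \Delta)\Gamma^\gamma Y|$, one of the form $|\nabla^2\Gamma^\beta Y||(\partial_t^2 - \Delta)\Gamma^\gamma Y|$, and the null-form term involving the good derivative $(\omega_k\partial_t + \nabla_k)\nabla\Gamma^\gamma Y$. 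For each, I would split the multi-indices according to whether $|\beta|$ or $|\gamma|$ is at most $[|\alpha|/2]$, placing the factor with the lower derivative count into $L^\infty$ (via Sobolev embedding together with Lemmas~\ref{DecayEF} and~\ref{DecayES}) and the other into $L^2$.

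Mirroring the cutoff construction from Lemma~\ref{SN-2}, I would then localize each spatial integral using the radial cutoff $\varphi^t$ into a far-from-light-cone piece (on the support of $1-\varphi^t$) and a near-light-cone piece. Far from the light cone, the pointwise bound $\|\partial^2\Gamma^{\le [|\alpha|/2]+4}Y\|_{L^\infty} \lesssim \langle t\rangle^{-1}\mathcal{X}_\kappa^{1/2}$ from Lemma~\ref{DecayES}, together with the $L^2$ decay of D'Alembertian furnished by Lemma~\ref{WE-3}, produces a contribution of order $\langle t\rangle^{-2}\mathcal{E}_\kappa$, strictly better than required.

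The delicate part is the near-light-cone region, where the extra $\langle t\rangle^{-1/2}$ factor beyond the analogue in Lemma~\ref{SN-2} must be extracted. Inside the support of $\varphi^t$ one has $r \gtrsim \langle t\rangle$, so the uniform estimate~\eqref{C-1} yields $\|\partial\Gamma^{\le|\gamma|+1}Y\|_{L^\infty(\mathrm{supp}\,\varphi^t)} \lesssim \langle t\rangle^{-1/2}\mathcal{E}_\kappa^{1/2}$; similarly, the near-LC pointwise bound of Lemma~\ref{DecayES} becomes $|\partial^2\Gamma^\beta Y|\lesssim \langle t\rangle^{-1/2}\langle t-r\rangle^{-1}\mathcal{E}_\kappa^{1/2}$. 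For the null-form Term~3, apply Lemma~\ref{GoodDeri} to bound $(\omega_k\partial_t + \nabla_k)\nabla\Gamma^\gamma Y$ by $\langle t\rangle^{-1}|\partial\Gamma^{\le|\gamma|+1}Y| + |(\partial_t^2 - \Delta)\Gamma^\gamma Y|$; the first piece, paired with the remaining $L^2$ factor, yields the desired $\langle t\rangle^{-3/2}\mathcal{E}_\kappa$, while the D'Alembertian piece is handled via Sobolev embedding and Lemma~\ref{WE-3}. For Terms~1 and~2, combine the near-LC pointwise decay of the low-derivative factor with the $L^2$ bound $\|(\partial_t^2 - \Delta)\Gamma^\gamma Y\|_{L^2}\lesssim \langle t\rangle^{-1}\mathcal{E}_\kappa$ from Lemma~\ref{WE-3}, using $\langle t-r\rangle^{-2}\le 1$ to absorb the singular weight and produce the target $\langle t\rangle^{-3/2}\mathcal{E}_\kappa$ decay.

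The main obstacle I anticipate is the appearance of residual terms of the form $\mathcal{E}_\kappa^{1/2}\|\nabla(\partial_t^2 - \Delta)\Gamma^{\le\kappa-3}Y\|_{L^2}$, emerging in particular from the D'Alembertian contribution in Term~3 when $|\gamma|$ is close to $|\alpha|$; these must be absorbed into the left-hand side using the smallness $\mathcal{E}_\kappa \ll 1$. The bookkeeping of derivative counts requires the hypothesis $\kappa \ge 7$ precisely so that $[|\alpha|/2] + 4 \le \kappa$ and $[|\alpha|/2] + 2 \le \kappa - 2$ hold for $|\alpha| \le \kappa - 3$, ensuring that all Sobolev embeddings and the weighted-norm estimates from Lemmas~\ref{WE-2} and~\ref{WE-3} are valid at the chosen derivative orders.
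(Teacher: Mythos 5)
Your proposal follows essentially the same route as the paper: reduce via Lemma \ref{SND-1}, split with the cutoff $\varphi^t$, get $\langle t\rangle^{-2}$ away from the light cone from Lemma \ref{DecayES}, use Lemma \ref{GoodDeri} and the weighted Sobolev inequalities near it, and absorb the residual $\mathcal{E}_\kappa^{1/2}\|\nabla(\partial_t^2-\Delta)\Gamma^{\le\kappa-3}Y\|_{L^2}$ term by smallness. One small correction: for the first term of $\Phi(\alpha,3)$ the D'Alembertian factor carries an extra gradient, so Lemma \ref{WE-3} does not apply to it directly; as in the paper, that term is handled entirely by Sobolev embedding plus the absorption step you already describe (this is where the absorbed term actually originates, rather than from the null-form term).
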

\begin{proof}
In view of Lemma \ref{SND-1},
we need to deal with $\Phi(\alpha, 3)$ for $|\alpha|\leq \kappa-3$.
Recall that
\begin{align}\label{F11}
\Phi(\alpha, 3) =
& \sum_{\beta + \gamma = \alpha,\
  \gamma \neq \alpha}\big\||\nabla\Gamma^\beta Y|\cdot|\nabla(\partial_t^2
   - \Delta)\Gamma^\gamma Y|\big\|_{L^2}\\\nonumber
&+\sum_{\beta + \gamma = \alpha}
 \big\| |\nabla^2\Gamma^\beta Y|\cdot|(\partial_t^2- \Delta)\Gamma^\gamma Y| \big\|_{L^2}
  \\\nonumber
&+\sum_{\beta + \gamma = \alpha}
  \big\| |\partial\nabla\Gamma^\beta Y| \cdot |(\omega_k\partial_t+\nabla_k)\nabla\Gamma^\gamma Y|\big\|_{L^2}.
\end{align}
We first treat the last two terms of \eqref{F11}.
For the integrals away from the light cone, it is easy to have the following first step
estimate
\begin{align}\nonumber
&\sum_{\beta + \gamma = \alpha}
 \| (1-\varphi^t)|\nabla^2\Gamma^\beta Y|\cdot|(\partial_t^2- \Delta)\Gamma^\gamma Y| \big\|_{L^2}
  \\\nonumber
&+\ \sum_{\beta + \gamma = \alpha}
  \big\| (1-\varphi^t)|\partial\nabla\Gamma^\beta Y| \cdot |(\omega_k\partial_t+\nabla_k)\nabla\Gamma^\gamma Y| \big\|_{L^2}\\\nonumber
&\lesssim \sum_{\beta + \gamma = \alpha,\
  |\gamma| \leq [|\alpha|/2]}\|1_{{\rm supp}(1 - \varphi^t)} \partial^2\Gamma^\beta Y\|_{L^2}
  \|1_{{\rm supp}(1 - \varphi^t)}\partial^2\Gamma^\gamma Y\|_{L^\infty},
\end{align}
where $\varphi^t$ is defined in
Lemma \ref{SN-2}.
Using Lemma \ref{DecayES} and Lemma \ref{WE-2}, the above is bounded by
\begin{equation}\nonumber
\langle t\rangle^{-2}\mathcal{X}_{ |\alpha| +
2}^{\frac{1}{2}}\mathcal{X}_{[|\alpha|/2] + 4}^{\frac{1}{2}}\leq \langle t\rangle^{-2}\mathcal{E}_{\kappa}.
\end{equation}
For the integral domain near the light cone,
by the first inequality in  Lemma \ref{DecayE}, Lemma \ref{GoodDeri} and Lemma \ref{WE-3}, they are bounded by
\begin{align}\nonumber
&\sum_{\beta + \gamma = \alpha}
 \| \varphi^t|\partial\nabla\Gamma^\beta Y|\cdot|(\partial_t^2- \Delta)\Gamma^\gamma Y| \big\|_{L^2}
  \\\nonumber
&\quad+\ \sum_{\beta + \gamma = \alpha}
  \langle t\rangle^{-1}\big\|\varphi^t
   |\partial\nabla\Gamma^\beta Y|\cdot
  |\nabla\Gamma^{|\gamma|+1} Y|
  \big\|_{L^2}\\\nonumber
&\lesssim  \sum_{\beta + \gamma = \alpha,\ |\beta|\geq |\gamma|}
  \|\nabla^2\Gamma^\beta Y\|_{L^2}\| \varphi^t(\partial_t^2- \Delta)\Gamma^\gamma Y| \big\|_{L^\infty} \\\nonumber
&\quad+ \sum_{\beta + \gamma = \alpha,\ |\beta|< |\gamma|}
  \| \varphi^t\nabla^2\Gamma^\beta Y\|_{L^\infty}\|(\partial_t^2- \Delta)\Gamma^\gamma Y| \big\|_{L^2} \\\nonumber
&\quad+\langle t\rangle^{-\frac32} \mathcal{E}_{|\alpha|+2}^{\frac12} \mathcal{E}_{[\alpha/2]+4}^{\frac12}
\\[-4mm]\nonumber\\\nonumber
&\lesssim\langle t\rangle^{-\frac32}\mathcal{E}_{\kappa}.
\end{align}
It remains to estimate the first term on the right hand side of \eqref{F11}.
By the Sobolev imbedding, it is bounded by
\begin{align*}
&\sum_{\beta + \gamma = \alpha,\ |\beta|\geq |\gamma|}
  \big\| |\nabla\Gamma^\beta Y|\cdot|\nabla(\partial_t^2- \Delta)\Gamma^\gamma Y|\big\|_{L^2}
  \\\nonumber
&+\sum_{\beta + \gamma = \alpha,\ |\beta|<|\gamma|<|\alpha|}
  \big\| |\nabla\Gamma^\beta Y|\cdot|\nabla(\partial_t^2- \Delta)\Gamma^\gamma Y|\big\|_{L^2}
  \\\nonumber
&\lesssim  \mathcal{E}_{|\alpha|+1}^{\frac12} \| \nabla(\partial_t^2- \Delta)\Gamma^{\leq[\alpha/2]+2}
Y|\big\|_{L^2}
+\mathcal{E}_{[\alpha/2]+3}^{\frac12} \| \nabla(\partial_t^2- \Delta)\Gamma^{\leq|\alpha|-1}
Y|\big\|_{L^2}.
\end{align*}

Now for $\kappa \geq 7$ and  $|\alpha| \leq \kappa - 3$, there holds $[|\alpha|/2] + 4
\leq \kappa $. Hence, we collect all the estimates and derive from \eqref{F11} that
\begin{align*}
\langle t\rangle^{\frac 32}\| \nabla (\partial_t^2-\Delta) \Gamma^{\leq\kappa-3} Y\|_{L^2}
\lesssim
&\mathcal{E}_{\kappa}+\langle t\rangle^{\frac 32}\| \nabla (\partial_t^2-\Delta) \Gamma^{\leq\kappa-3} Y\|_{L^2}\mathcal{E}_{\kappa}^{\frac12}.
\end{align*}
Recall the assumption that $\mathcal{E}_{\kappa}\ll 1$. The last term is absorbed by the
left hand side. Thus the lemma is proved.

\end{proof}

\section{Energy estimate}
In this section, we perform the highest-order energy estimate.
Before then, we prepare some lemmas.

\subsection{Equivalence between energy}
In this subsection, we are going to show that the generalized energy is determined by its curl part.
\begin{lem}
Let $\kappa\geq 7$. Assume $\mathcal{E}_{\kappa}\ll 1$.
There holds the equivalent relation
\begin{align*}
&\mathcal{E}_{\kappa}\sim \sum_{|\alpha|\leq \kappa-1}
\|\partial\nabla(-\Delta)^{-1}\nabla^\perp\cdot \Gamma^\alpha Y \|^2_{L^2}.
\end{align*}
\end{lem}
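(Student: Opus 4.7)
The plan rests on the 2D Helmholtz decomposition together with the key observation from \eqref{Struc-2} that $\nabla\cdot\Gamma^\alpha Y$ is purely quadratic.

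First I would apply the Helmholtz decomposition to the $L^2$ vector field $\partial\Gamma^\alpha Y$,
\begin{equation*}
\partial\Gamma^\alpha Y = -\nabla(-\Delta)^{-1}\nabla\cdot\partial\Gamma^\alpha Y - \nabla^\perp(-\Delta)^{-1}\nabla^\perp\cdot\partial\Gamma^\alpha Y.
\end{equation*}
The two components are $L^2$-orthogonal since $\nabla\cdot\nabla^\perp\equiv 0$. Because $\partial$ commutes with $\nabla$ and $(-\Delta)^{-1}$, and since $|\nabla f|=|\nabla^\perp f|$ pointwise for any scalar $f$, this yields the Pythagorean identity
\begin{equation*}
\|\partial\Gamma^\alpha Y\|_{L^2}^2 = \|\partial\nabla(-\Delta)^{-1}\nabla\cdot\Gamma^\alpha Y\|_{L^2}^2 + \|\partial\nabla(-\Delta)^{-1}\nabla^\perp\cdot\Gamma^\alpha Y\|_{L^2}^2.
\end{equation*}
Summing over $|\alpha|\leq\kappa-1$ immediately gives the easy direction
\[
\sum_{|\alpha|\leq\kappa-1}\|\partial\nabla(-\Delta)^{-1}\nabla^\perp\cdot\Gamma^\alpha Y\|_{L^2}^2 \leq \mathcal{E}_\kappa.
\]

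For the other direction, the plan is to show that the divergence term is a higher-order quantity that can be absorbed under smallness. By \eqref{Struc-2}, $\nabla\cdot\Gamma^\alpha Y$ is a bilinear combination of $\nabla\Gamma^\beta Y$ and $\nabla\Gamma^\gamma Y$ with $\beta+\gamma=\alpha$. Since the operator $\partial\nabla(-\Delta)^{-1}$ has bounded symbol $-\xi_a\xi_b/|\xi|^2$, it is bounded on $L^2$, so
\begin{equation*}
\|\partial\nabla(-\Delta)^{-1}\nabla\cdot\Gamma^\alpha Y\|_{L^2}\lesssim \sum_{\beta+\gamma=\alpha}\bigl\||\nabla\Gamma^\beta Y|\,|\nabla\Gamma^\gamma Y|\bigr\|_{L^2}.
\end{equation*}
An $L^\infty$--$L^2$ bilinear split, placing the factor with fewer generalized derivatives in $L^\infty$ via the 2D Sobolev embedding $H^2\hookrightarrow L^\infty$, bounds the right-hand side by $\mathcal{E}_\kappa$; the hypothesis $\kappa\geq 7$ ensures $[|\alpha|/2]+3\leq \kappa-1$ so the $L^\infty$ factor is covered by the generalized energy. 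Squaring and summing,
\[
\sum_{|\alpha|\leq\kappa-1}\|\partial\nabla(-\Delta)^{-1}\nabla\cdot\Gamma^\alpha Y\|_{L^2}^2 \lesssim \mathcal{E}_\kappa^2.
\]

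Plugging these two ingredients back into the Pythagorean identity yields
\[
\mathcal{E}_\kappa \leq C\,\mathcal{E}_\kappa^2 + C\sum_{|\alpha|\leq\kappa-1}\|\partial\nabla(-\Delta)^{-1}\nabla^\perp\cdot\Gamma^\alpha Y\|_{L^2}^2,
\]
and the smallness assumption $\mathcal{E}_\kappa\ll 1$ absorbs the quadratic term into the left-hand side, producing the $\gtrsim$ direction. The only mildly technical step is the bilinear bookkeeping for the divergence term; that is routine once the derivative counts above are verified, so no genuine obstacle arises and Lemma \ref{Commu} is not needed here because the Helmholtz projection is applied after $\Gamma^\alpha$ directly to the vector field $\Gamma^\alpha Y$.
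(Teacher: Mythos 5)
Your overall architecture is exactly the paper's: Helmholtz (div--curl) decomposition of $\Gamma^\alpha Y$, the Pythagorean identity for the two $L^2$-orthogonal pieces, the observation from \eqref{Struc-2} that the divergence piece is quadratic, and absorption of the resulting $\mathcal{E}_\kappa^2$ term by smallness. The reverse inequality via $L^2$-boundedness of the Riesz transform is also the same. You are likewise right that Lemma \ref{Commu} is not needed here.

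There is, however, a gap in your treatment of the divergence piece. You justify
$\|\partial\nabla(-\Delta)^{-1}\nabla\cdot\Gamma^\alpha Y\|_{L^2}\lesssim\sum_{\beta+\gamma=\alpha}\||\nabla\Gamma^\beta Y||\nabla\Gamma^\gamma Y|\|_{L^2}$
by saying that $\partial\nabla(-\Delta)^{-1}$ has bounded symbol $-\xi_a\xi_b/|\xi|^2$. That is only true when $\partial$ is a \emph{spatial} derivative; in this paper $\partial=(\partial_t,\partial_1,\partial_2)$, and $\mathcal{E}_\kappa$ crucially contains $\|\partial_t\Gamma^\alpha Y\|_{L^2}^2$, so the time component cannot be dropped. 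For $\partial=\partial_t$ the operator $\partial_t\nabla(-\Delta)^{-1}$ is not a bounded spatial multiplier: you must commute $\partial_t$ onto the quadratic expression \eqref{Struc-2}, which by the product rule produces terms such as $\nabla^\perp\partial_t\Gamma^\beta Y^2\cdot\nabla\Gamma^\gamma Y^1$ containing a \emph{second}-order factor, and the remaining outer operator $\nabla(-\Delta)^{-1}$ is of order $-1$ and is not $L^2$-bounded (its symbol blows up at low frequency). The missing algebraic step — which is precisely what the paper supplies — is to rewrite the differentiated quadratic in divergence form,
\begin{align*}
\partial\big(\nabla^\perp\Gamma^\beta Y^2\cdot\nabla\Gamma^\gamma Y^1\big)
=\nabla^\perp\cdot\big(\partial\Gamma^\beta Y^2\,\nabla\Gamma^\gamma Y^1\big)
+\nabla\cdot\big(\nabla^\perp\Gamma^\beta Y^2\,\partial\Gamma^\gamma Y^1\big),
\end{align*}
so that the composite operators $\nabla(-\Delta)^{-1}\nabla^\perp\cdot$ and $\nabla(-\Delta)^{-1}\nabla\cdot$ are genuinely zero-order and act on products of first derivatives only; one then gets the bound with $\|\partial\Gamma^\beta Y\,\nabla\Gamma^\gamma Y\|_{L^2}$ on the right, which the $L^\infty$--$L^2$ split and $\kappa\ge 7$ control by $\mathcal{E}_\kappa$. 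This is an easily repaired oversight rather than a wrong approach, but as written your bound for the $\partial_t$ component is unjustified.
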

\begin{proof}
Due to the div-curl decomposition
\begin{align*}
\Gamma^\alpha Y=\nabla(-\Delta)^{-1}\nabla\cdot \Gamma^\alpha Y+\nabla^\perp(-\Delta)^{-1}\nabla^\perp\cdot \Gamma^\alpha Y ,
\end{align*}
it's easy to see that
\begin{align}\label{DC}
\| \partial  \Gamma^\alpha Y\|_{L^2}^2
=\| \partial\nabla(-\Delta)^{-1}\nabla\cdot \Gamma^\alpha Y\|_{L^2}^2
+\| \partial\nabla(-\Delta)^{-1}\nabla^\perp\cdot \Gamma^\alpha Y\|_{L^2}^2.
\end{align}
Here we used the identity
\begin{align*}
\| \partial\nabla(-\Delta)^{-1}\nabla^\perp\cdot \Gamma^\alpha Y\|_{L^2}^2=\| \partial\nabla^\perp(-\Delta)^{-1}\nabla^\perp\cdot \Gamma^\alpha Y\|_{L^2}^2.
\end{align*}
Now let us calculate $\partial \nabla\cdot \Gamma^\alpha Y$. By \eqref{Struc-2}, we write
\begin{align*}
\partial\nabla\cdot \Gamma^\alpha Y
&=\partial\sum_{\beta + \gamma =\alpha}C_\alpha^\beta(\nabla^\perp\Gamma^\beta Y^2\cdot \nabla\Gamma^\gamma Y^1) \\\nonumber
&=\sum_{\beta + \gamma =\alpha}C_\alpha^\beta
(\nabla^\perp\partial\Gamma^\beta Y^2\cdot \nabla\Gamma^\gamma Y^1
+\nabla^\perp\Gamma^\beta Y^2\cdot \nabla\partial \Gamma^\gamma Y^1)\\
&=\sum_{\beta + \gamma =\alpha}C_\alpha^\beta
[\nabla^\perp\cdot(\partial\Gamma^\beta Y^2 \nabla\Gamma^\gamma Y^1)
+ \nabla\cdot(\nabla^\perp\Gamma^\beta Y^2\partial \Gamma^\gamma Y^1)].
\end{align*}
Hence
\begin{align*}
& \| \partial\nabla(-\Delta)^{-1}\nabla\cdot \Gamma^\alpha Y\|_{L^2}^2 \\
&\lesssim \sum_{\beta + \gamma =\alpha}
\|\nabla(-\Delta)^{-1}\nabla^\perp\cdot(\partial\Gamma^\beta Y^2 \nabla\Gamma^\gamma Y^1) \|_{L^2}^2 \\
&\quad +\sum_{\beta + \gamma =\alpha} \| \nabla(-\Delta)^{-1}\nabla\cdot(\nabla^\perp\Gamma^\beta Y^2\partial \Gamma^\gamma Y^1)\|_{L^2}^2\\
&\lesssim \sum_{\beta + \gamma =\alpha}
\|  \partial\Gamma^\beta Y \nabla\Gamma^\gamma Y \|_{L^2}^2
\lesssim \mathcal{E}_{\kappa}^2.
\end{align*}
Inserting the above inequality into \eqref{DC} and then summing over $|\alpha|\leq \kappa-1$ yields
\begin{align*}
\mathcal{E}_{\kappa}
\lesssim \mathcal{E}_{\kappa}^2
+\sum_{|\alpha|\leq \kappa-1} \| \partial\nabla(-\Delta)^{-1}\nabla^\perp\cdot \Gamma^\alpha Y\|_{L^2}^2.
\end{align*}
Note the assumption $\mathcal{E}_{\kappa} \ll 1$. The first term on the right hand side
is absorbed by the left hand side. Hence we obtain
\begin{align*}
\mathcal{E}_{\kappa}
\lesssim
\sum_{|\alpha|\leq \kappa-1} \| \partial\nabla(-\Delta)^{-1}\nabla^\perp\cdot \Gamma^\alpha Y\|_{L^2}^2.
\end{align*}
The reverse bound is an easy consequence of the $L^2$ boundedness of the Riesz transform.
Thus the lemma is proved.
\end{proof}

\subsection{Temporal decay of unknowns with Riesz transform}

\begin{lem}\label{DivE}
Let $\kappa \geq 7$.
For all $|\alpha|\leq \kappa-1$, there holds
\begin{align*}
\| \partial\nabla(-\Delta)^{-1}\nabla\cdot \Gamma^\alpha Y\|_{L^2}
\lesssim \langle t\rangle^{-\frac 12} \mathcal{E}_{\kappa}.
\end{align*}
For all $|\alpha|\leq [(\kappa-1)/2]$, there holds
\begin{align*}
&\| \partial\nabla^2(-\Delta)^{-1} \nabla^\perp\cdot\Gamma^\alpha Y\|_{L^\infty}
\lesssim \langle t\rangle^{-\frac12} \mathcal{E}_{\kappa}^{\frac12}, \\
&\| \partial\nabla^2(-\Delta)^{-1}\nabla\cdot \Gamma^\alpha Y\|_{L^\infty}
\lesssim \langle t\rangle^{-1} \mathcal{E}_{\kappa}.
\end{align*}
\end{lem}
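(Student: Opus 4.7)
The proof rests on two structural inputs. First, \eqref{Struc-2} expresses $\nabla\cdot\Gamma^\alpha Y$ as a sum of quadratic null forms in the first derivatives of $\Gamma^{\leq|\alpha|}Y$, which already carries temporal decay in $L^2$. Second, the elementary identity $\nabla_i\nabla_i^\perp=0$ makes each such null form a pure divergence in two equivalent ways, $\nabla_i f\,\nabla_i^\perp g=\nabla_i(f\,\nabla_i^\perp g)=\nabla_i^\perp(\nabla_if\cdot g)$, and, most importantly, this divergence form is preserved after one more time differentiation:
\[\partial_t\bigl(\nabla_i f\,\nabla_i^\perp g\bigr)=\nabla_i\bigl(\partial_tf\,\nabla_i^\perp g\bigr)+\nabla_i^\perp\bigl(\nabla_i f\,\partial_t g\bigr),\]
with every factor still carrying a derivative.

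For the first inequality, I would write $\nabla\cdot\Gamma^\alpha Y=-\tfrac12\sum_{\beta+\gamma=\alpha}C_\alpha^\beta\,\nabla_i\Gamma^\beta Y_j\,\nabla_i^\perp\Gamma^\gamma Y_j^\perp$ from \eqref{Struc-2}. When $\partial$ is spatial, $\partial\,\nabla(-\Delta)^{-1}$ acting on a scalar has symbol $-\xi_j\xi_k/|\xi|^2$ and is therefore a zero-order Riesz operator, so $L^2$-boundedness gives
\[\|\partial\,\nabla(-\Delta)^{-1}\nabla\cdot\Gamma^\alpha Y\|_{L^2}\lesssim\sum_{\beta+\gamma=\alpha}\bigl\|\,\nabla\Gamma^\beta Y\,\nabla\Gamma^\gamma Y\,\bigr\|_{L^2},\]
and Hölder's inequality with Lemma \ref{DecayEF} (the lower-index factor in $L^\infty$ with bound $\langle r\rangle^{-1/2}\mathcal{E}_\kappa^{1/2}$, the higher-index one in $L^2$) yields the $\langle t\rangle^{-1/2}\mathcal{E}_\kappa$ bound. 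When $\partial=\partial_t$, I commute $\partial_t$ past $\nabla(-\Delta)^{-1}$ and apply the displayed identity, so that $\partial_t\nabla\cdot\Gamma^\alpha Y$ is rewritten as a sum of divergences of null forms in which every factor is still differentiated; both $\nabla(-\Delta)^{-1}\nabla_i$ and $\nabla(-\Delta)^{-1}\nabla_i^\perp$ are zero-order Riesz on scalars, so the estimate reduces to the same type of Hölder argument, producing $\|\partial_t\Gamma^\beta Y\,\nabla\Gamma^\gamma Y\|_{L^2}+\|\nabla\Gamma^\beta Y\,\partial_t\Gamma^\gamma Y\|_{L^2}\lesssim\langle t\rangle^{-1/2}\mathcal{E}_\kappa$.

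For the two $L^\infty$ inequalities, I plan to commute the zero-order Riesz combinations $\nabla(-\Delta)^{-1}\nabla^\perp\cdot$ and $\nabla(-\Delta)^{-1}\nabla\cdot$ past $\Gamma^\alpha$ via Lemma \ref{Commu}, incurring only lower $\Gamma$-order commutator terms which are themselves zero-order Riesz of $\Gamma^{\leq|\alpha|-1}Y$. The remaining pointwise estimate of $\partial\nabla$ of a zero-order Riesz of $\Gamma^{\leq|\alpha|}Y$ is then handled by the two-dimensional Sobolev embedding combined with the pointwise decay of Lemmas \ref{DecayEF} and \ref{DecayES} (which rely on $\mathcal{X}_\kappa\lesssim\mathcal{E}_\kappa$ from Lemma \ref{WE-2}), yielding the desired $\langle t\rangle^{-1/2}\mathcal{E}_\kappa^{1/2}$ for the second inequality. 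For the third, the divergence $\nabla\cdot\Gamma^\alpha Y$ is quadratic, so the re-divergence strategy of the first inequality applies again, and an additional Hölder pairing (with both factors now enjoying the pointwise $\langle r\rangle^{-1/2}\mathcal{E}_\kappa^{1/2}$ decay) supplies one more factor of $\langle t\rangle^{-1/2}\mathcal{E}_\kappa^{1/2}$, upgrading the rate to $\langle t\rangle^{-1}\mathcal{E}_\kappa$.

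The delicate step is the $\partial=\partial_t$ case of the first inequality: a naive manipulation forces one $\Gamma^\beta Y$-factor to appear undifferentiated, which is not controlled by $\mathcal{E}_\kappa$ since a plain Hardy inequality for $L^2$-gradients is not available in two space dimensions. The resolution is precisely the re-divergence identity displayed above, which allows the time-differentiated null form to be written as a sum of pure divergences with every factor still carrying a derivative; Lemma \ref{DecayEF} and the $\mathcal{E}_\kappa$ control then combine to close the estimate.
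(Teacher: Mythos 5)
Your treatment of the first ($L^2$) inequality is essentially the paper's: the re-divergence identity $\partial(\nabla_i f\,\nabla_i^\perp g)=\nabla_i(\partial f\,\nabla_i^\perp g)+\nabla_i^\perp(\nabla_i f\,\partial g)$ applied to \eqref{Struc-2}, followed by $L^2$-boundedness of the Riesz operators, H\"older, and Lemma \ref{DecayEF}, is exactly how the paper argues (note only that you need all three parts of Lemma \ref{DecayEF} to convert $\langle r\rangle^{-1/2}$ into a genuine $\langle t\rangle^{-1/2}$ bound near $r=0$).

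For the two $L^\infty$ inequalities, however, there is a genuine gap. Your plan is to commute the zero-order operators past $\Gamma^\alpha$ and then invoke the pointwise decay of Lemmas \ref{DecayEF} and \ref{DecayES} (resp.\ a H\"older pairing of two pointwise-decaying factors) on the Riesz-transformed quantity. But those decay lemmas are weighted Sobolev embeddings whose right-hand sides are $\mathcal{X}$-type norms, i.e.\ $L^2$ norms carrying the weight $\langle t-r\rangle$; to apply them to $P(\nabla)\Gamma^\alpha Y$ you must control $\|\langle t-r\rangle\,\partial\nabla P(\nabla)\Gamma^\alpha Y\|_{L^2}$, and this does \emph{not} follow from $\mathcal{X}_\kappa\lesssim\mathcal{E}_\kappa$, because $\langle t-r\rangle^2$ (like $\langle r\rangle^2$) is not an $A_2$ weight in two dimensions, so the singular integral does not pass through the weight. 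This is precisely the obstruction the paper flags in the introduction and resolves by introducing the auxiliary norm $\mathcal{Z}_\kappa$ together with Lemma \ref{WE-3} (the weight on the second-derivative term is traded for a $\langle t\rangle$ factor on the D'Alembertian, after which the Riesz operator acts on an unweighted $L^2$ quantity); your proposal never engages with $\mathcal{Z}_\kappa$ or Lemma \ref{WE-3}. The same issue defeats your sketch of the third inequality in a second way: Riesz transforms are not bounded on $L^\infty$, so pointwise $\langle r\rangle^{-1/2}\times\langle r\rangle^{-1/2}$ decay of the two factors of the quadratic form does not survive the outer operator $\nabla^2(-\Delta)^{-1}$, and in any case $\langle r\rangle^{-1}$ is not $\langle t\rangle^{-1}$ in the region $r\ll t$. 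The paper instead splits into $r\geq\langle t\rangle/2$ (where the first inequality of Lemma \ref{DecayE} plus the quadratic structure of $\nabla\cdot\Gamma^\alpha Y$ gives $\langle t\rangle^{-1/2}\cdot\langle t\rangle^{-1/2}$) and $r\leq\langle t\rangle/2$, where it uses the interior embedding $\langle t\rangle\|Y\|_{L^\infty(r\leq\langle t\rangle/2)}\lesssim\sum_{|a|\leq2}\|\langle t-r\rangle\partial^aY\|_{L^2}$ and then performs an explicit integration by parts against $\langle t-r\rangle^2$ to peel the operator $(-\Delta)^{-1}\nabla_i\nabla_j$ off the weighted integrand, reducing matters to $\|\langle t-r\rangle\,\partial\nabla^a\nabla\cdot\Gamma^\alpha Y\|_{L^2}$, which is quadratic and controlled by $\mathcal{E}_\kappa^2$. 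Without an argument of this kind (or $\mathcal{Z}_\kappa$), the $L^\infty$ bounds do not close.
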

\begin{proof}
To show the first inequality, for $|\alpha|\leq \kappa-1$, we employ \eqref{Struc-2} to calculate that
\begin{align*}
&\partial\nabla(-\Delta)^{-1}\nabla\cdot \Gamma^\alpha Y \\\nonumber
&=\partial\nabla(-\Delta)^{-1}
\sum_{\beta + \gamma =\alpha}C_\alpha^\beta(\nabla^\perp\Gamma^\beta Y^2\cdot \nabla\Gamma^\gamma Y^1)\\\nonumber
&=\nabla(-\Delta)^{-1}\nabla^\perp\cdot
\sum_{\beta + \gamma =\alpha}C_\alpha^\beta(\partial\Gamma^\beta Y^2\nabla\Gamma^\gamma Y^1)\\\nonumber
&\quad+\nabla(-\Delta)^{-1} \nabla\cdot\sum_{\beta + \gamma =\alpha}C_\alpha^\beta(\nabla^\perp\Gamma^\beta Y^2 \partial\Gamma^\gamma Y^1).
\end{align*}
Hence using Lemma \ref{DecayEF} and Lemma
\ref{WE-2}, we have
\begin{align*}
&\| \partial\nabla(-\Delta)^{-1}\nabla\cdot \Gamma^\alpha Y\|_{L^2} \\\nonumber
&\lesssim \sum_{\beta + \gamma =\alpha}
\big\| |\nabla\Gamma^\beta Y| \cdot |\partial\Gamma^\gamma Y| \big\|_{L^2}\\\nonumber
&\lesssim \|\partial \Gamma^{\leq|\alpha|} Y\|_{L^2}  \|\partial \Gamma^{\leq[|\alpha|/2]} Y\|_{L^\infty}
\lesssim \langle t\rangle^{-\frac 12} \mathcal{E}_{\kappa}.
\end{align*}


For the second inequality,
by third inequality in Lemma \ref{DecayE},  Lemma \ref{Commu} and Lemma \ref{WE-3},
there holds
\begin{align*}
&\| \partial\nabla^2(-\Delta)^{-1} \nabla^\perp\cdot\Gamma^\alpha Y\|_{L^\infty} \\
&\lesssim \langle t\rangle^{-\frac12}
\sum_{|a|\leq 1}
\big(\|\langle t-r\rangle\partial_r\Omega^a  \partial\nabla^2(-\Delta)^{-1} \nabla^\perp\cdot\Gamma^\alpha Y \|_{L^2}
+\| \Omega^a  \partial\nabla^2(-\Delta)^{-1} \nabla^\perp\cdot\Gamma^\alpha Y \|_{L^2} \big)\\
&\lesssim\langle t\rangle^{-\frac12} \mathcal{E}_{\kappa}^{\frac12}.
\end{align*}

Finally, we treat the third inequality.
If $r\geq \langle t\rangle/2$, by the first inequality in Lemma \ref{DecayE}, Lemma \ref{Commu} and Lemma \ref{WE-3},
there holds
\begin{align} \label{H3}
&\|\partial\nabla^2(-\Delta)^{-1}\nabla\cdot \Gamma^\alpha Y \|_{L^\infty(r\geq \langle t\rangle/2)} \\\nonumber
&\lesssim \langle t\rangle^{-\frac12} \sum_{a=0}^{1}\sum_{b=0}^{1}
\|\nabla^a \Omega^b \partial\nabla^2(-\Delta)^{-1}\nabla\cdot \Gamma^\alpha Y\|_{L^2}\\\nonumber
&\lesssim \langle t\rangle^{-\frac12} \sum_{a=0}^{1}\sum_{b=0}^{1}
\|\nabla^a \Omega^b \partial \nabla\cdot \Gamma^\alpha Y\|_{L^2}.
\end{align}
For $|\alpha|\leq [(\kappa-1)/2]$, we recall by \eqref{Struc-2} that
\begin{align*}
\nabla\cdot \Gamma^\alpha Y
&=-\frac12 \sum_{\beta + \gamma =\alpha}C_\alpha^\beta \nabla_i\Gamma^\beta Y_j\nabla_i^\perp\Gamma^\gamma Y_j^\perp.
\end{align*}
Hence by Lemma \ref{DecayEF} and Lemma \ref{WE-2}, \eqref{H3} can be further bounded by
\begin{equation*}
\langle t\rangle^{-1} \mathcal{E}_{\kappa}.
\end{equation*}
Otherwise, if $r\leq \langle t\rangle/2$, by the last inequality in Lemma \ref{DecayE},
there holds
\begin{align*}
&\|\partial\nabla^2(-\Delta)^{-1}\nabla\cdot \Gamma^\alpha Y \|_{L^\infty(r\leq \langle t\rangle/2)} \\\nonumber
&\lesssim \langle t\rangle^{-1}
\sum_{|a|\leq 2}\| \langle t-r \rangle\partial\nabla^a\nabla^2(-\Delta)^{-1}\nabla\cdot \Gamma^\alpha Y\|_{L^2}.
\end{align*}
Now, by integration by parts, we derive that
\begin{align*}
&\| \langle t-r \rangle \partial\nabla^a\nabla^2(-\Delta)^{-1}\nabla\cdot \Gamma^\alpha Y\|_{L^2}^2 \\
&=\int \langle t-r \rangle^2 \partial\nabla^a\nabla_i\nabla_j(-\Delta)^{-1}\nabla\cdot \Gamma^\alpha Y
\cdot \partial\nabla^a\nabla_i\nabla_j(-\Delta)^{-1}\nabla\cdot \Gamma^\alpha Ydy\\
&=-\int \nabla_i\langle t-r \rangle^2 \partial\nabla^a\nabla_i\nabla_j(-\Delta)^{-1}\nabla\cdot \Gamma^\alpha Y
\cdot \partial\nabla^a\nabla_j(-\Delta)^{-1}\nabla\cdot \Gamma^\alpha Ydy\\
&\quad+\int \langle t-r \rangle^2 \partial\nabla^a\nabla_j\nabla\cdot \Gamma^\alpha Y
\cdot \partial\nabla^a\nabla_j(-\Delta)^{-1}\nabla\cdot \Gamma^\alpha Ydy\\
&=-\int \nabla_i\langle t-r \rangle^2 \partial\nabla^a\nabla_i\nabla_j(-\Delta)^{-1}\nabla\cdot \Gamma^\alpha Y
\cdot \partial\nabla^a\nabla_j(-\Delta)^{-1}\nabla\cdot \Gamma^\alpha Ydy\\
&\quad-\int \nabla_j\langle t-r \rangle^2 \partial\nabla^a\nabla\cdot \Gamma^\alpha Y
\cdot \partial\nabla^a\nabla_j(-\Delta)^{-1}\nabla\cdot \Gamma^\alpha Ydy\\
&\quad+\int \langle t-r \rangle^2 \partial\nabla^a\nabla\cdot \Gamma^\alpha Y
\cdot \partial\nabla^a\nabla\cdot \Gamma^\alpha Ydy\\
&\lesssim \| \langle t-r \rangle \partial\nabla^a\nabla_i\nabla_j(-\Delta)^{-1}\nabla\cdot \Gamma^\alpha Y\|_{L^2}
\|\partial\nabla^a\nabla_j(-\Delta)^{-1}\nabla\cdot \Gamma^\alpha Y\|_{L^2}\\[-4mm]\nonumber\\
&\quad+\|\langle t-r \rangle \partial\nabla^a\nabla\cdot \Gamma^\alpha Y\|_{L^2}
\cdot \|\partial\nabla^a\nabla_j(-\Delta)^{-1}\nabla\cdot \Gamma^\alpha Y\|_{L^2}\\[-4mm]\nonumber\\
&\quad +\|\langle t-r \rangle \partial\nabla^a\nabla\cdot \Gamma^\alpha Y\|^2_{L^2}.
\end{align*}
This further implies that
\begin{align}\label{H4}
&\| \langle t-r \rangle \partial\nabla^a\nabla^2(-\Delta)^{-1}\nabla\cdot \Gamma^\alpha Y\|_{L^2}^2 \\[-4mm]\nonumber\\\nonumber
&\lesssim
\|\partial\nabla^a\nabla(-\Delta)^{-1}\nabla\cdot \Gamma^\alpha Y\|^2_{L^2}
+\|\langle t-r \rangle \partial\nabla^a\nabla\cdot \Gamma^\alpha Y\|^2_{L^2}.
\end{align}
Recall that
\begin{align*}
\partial\nabla\cdot \Gamma^\alpha Y
&=\sum_{\beta + \gamma =\alpha}C_\alpha^\beta
[\nabla^\perp\cdot(\partial\Gamma^\beta Y^2 \nabla\Gamma^\gamma Y^1)
+ \nabla\cdot(\nabla^\perp\Gamma^\beta Y^2\partial \Gamma^\gamma Y^1)].
\end{align*}
Then by the Sobolev imbedding and Lemma \ref{WE-2}, \eqref{H4} can be updated as follows
\begin{equation*}
\| \langle t-r \rangle \partial\nabla^a\nabla^2(-\Delta)^{-1}\nabla\cdot \Gamma^\alpha Y\|_{L^2}^2
\leq \mathcal{E}_{\kappa}^2.
\end{equation*}
%
Thus the third inequality is proved. This ends the proof of the lemma.
\end{proof}

\subsection{Highest-order energy estimate}\label{LEE}
\begin{proof}[Proof of the Theorem \ref{GlobalW}]
Let $\kappa \geq 7$, $|\alpha| \leq \kappa - 1$, $\sigma = t
- r$, $q(\sigma) = \arctan\sigma$. Then $e^{- q(\sigma)}\sim 1$.
Applying the operator $\nabla(- \Delta)^{- 1}\nabla^\perp\cdot$ to \eqref{Elasticity-D},
we have
\begin{align}\label{Elasticity-LL}
&(\partial_t^2 - \Delta)\nabla(- \Delta)^{- 1}\nabla^\perp\cdot
  \Gamma^\alpha Y\\[-4mm]\nonumber\\\nonumber
&=  \sum_{\beta + \gamma = \alpha}
  \nabla(- \Delta)^{- 1}\nabla^\perp\cdot\big[C_\alpha^\beta(\nabla \Gamma^\beta
  Y)^{\top}(\partial_t^2 - \Delta)\Gamma^\gamma Y\big].
\end{align}
Multiplying \eqref{Elasticity-LL} by $e^{- q(\sigma)} \partial_t\nabla(- \Delta)^{-1}\nabla^\perp\cdot
  \Gamma^\alpha Y$ and then integrating over $\mathbb{R}^2$, one has
\begin{align}\label{G0}
&\frac{1}{2}\frac{d}{dt}\int\big(\big|\partial_t\nabla(- \Delta)^{-1}\nabla^\perp\cdot\Gamma^\alpha Y\big|^2
  + \big|\nabla\nabla(- \Delta)^{-1}\nabla^\perp\cdot\Gamma^\alpha
  Y\big|^2\big)e^{- q(\sigma)} dy\\\nonumber
&+\frac{1}{2}\sum_k\int
\frac{\big|(\omega_k\partial_t+\partial_k)\nabla(- \Delta)^{-1}\nabla^\perp\cdot\Gamma^\alpha Y\big|^2}
{\langle t-r\rangle^2}e^{- q(\sigma)} dy\\\nonumber
&\lesssim \sum_{\beta + \gamma = \alpha}
  \big\|\partial_t\nabla(- \Delta)^{-1}\nabla^\perp\cdot
  \Gamma^\alpha Y\big\|_{L^2}
  \big\|\nabla(- \Delta)^{- 1}\nabla^\perp\cdot\big[(\nabla \Gamma^\beta
  Y)^{\top}(\partial_t^2 - \Delta)\Gamma^\gamma Y\big]\big\|_{L^2}\\\nonumber
&\lesssim \mathcal{E}_{\kappa}^{\frac{1}{2}}\sum_{\beta +
  \gamma = \alpha}\|(\nabla
  \Gamma^\beta Y)^{\top}(\partial_t^2 - \Delta)\Gamma^\gamma
  Y\|_{L^2}.
\end{align}

For the expression on the right hand side of \eqref{G0},
we first deal with the lower order terms corresponding to $\gamma\neq\alpha$:
\begin{align}\label{G1}
&\mathcal{E}_{\kappa}^{\frac{1}{2}}\sum_{\beta + \gamma
  = \alpha,\ \gamma \neq \alpha}\|(\nabla
  \Gamma^\beta Y)^{\top}(\partial_t^2 - \Delta)\Gamma^\gamma
  Y\|_{L^2}\\\nonumber
&\lesssim \mathcal{E}_{\kappa}^{\frac{1}{2}}\sum_{\beta + \gamma
  = \alpha,\ |\gamma| \leq [|\alpha|/2]}\|\nabla
  \Gamma^\beta Y\|_{L^2}\|(\partial_t^2 - \Delta)\Gamma^\gamma
  Y\|_{L^\infty}\\\nonumber
&\quad +\ \mathcal{E}_{\kappa}^{\frac{1}{2}}\sum_{\tiny\begin{matrix}\beta + \gamma
  = \alpha, \ \gamma \neq \alpha \\  |\beta| \leq [|\alpha|/2]\end{matrix}}
  \|\nabla
  \Gamma^\beta Y\|_{L^\infty}\|(\partial_t^2 - \Delta)\Gamma^\gamma
  Y\|_{L^2}.
\end{align}
Note $\kappa\geq 7$. For $|\gamma|\leq [|\alpha|/2]$, there holds $|\gamma|\leq \kappa-4$.
By Lemma \ref{DecayEF}, Lemma \ref{SN-2}, Lemma \ref{WE-2} and Lemma \ref{Box3}, \eqref{G1} can be further bounded by
\begin{align}\label{G2}
&\mathcal{E}_{\kappa}\|(\partial_t^2 -
  \Delta)\Gamma^{\leq\kappa - 4}Y\|_{L^\infty} +
  \langle t\rangle^{-\frac{1}{2}}\mathcal{E}_{\kappa}
  \|(\partial_t^2 -\Delta)\Gamma^{\leq\kappa - 2}Y\|_{L^2}\\[-4mm]\nonumber\\\nonumber
&\lesssim \mathcal{E}_{\kappa}
\|(\partial_t^2 -\Delta)\Gamma^{\leq\kappa - 4}Y\|^{\frac12}_{L^2}\|\nabla^2(\partial_t^2 -\Delta)\Gamma^{\leq\kappa - 4}Y\|^{\frac12}_{L^2}+\langle t\rangle^{-\frac32}\mathcal{E}_{\kappa}^{2}\\[-4mm]\nonumber\\\nonumber
&\lesssim\langle t\rangle^{-\frac54}\mathcal{E}_{\kappa}^{2}
+\langle t\rangle^{-\frac32}\mathcal{E}_{\kappa}^{2}.
\end{align}

Next we treat the highest order term in \eqref{G0} corresponding to $\gamma=\alpha$
which is also the most troublesome term.
By Lemma \ref{DecayEF} and Lemma \ref{WE-2}, we deduce that
\begin{align}\label{G4}
\mathcal{E}_{\kappa }^{\frac{1}{2}}
\| (\nabla Y)^{\top} (\partial_t^2 - \Delta)\Gamma^\alpha Y\|_{L^2}
&\leq
\mathcal{E}_{\kappa }^{\frac{1}{2}}
\|\nabla  Y\|_{L^\infty}
\|(\partial_t^2 - \Delta)\Gamma^\alpha Y\|_{L^2}\\\nonumber
&\lesssim \langle t\rangle^{-\frac12}\mathcal{E}_{\kappa}
\|(\partial_t^2 - \Delta)\Gamma^\alpha Y\|_{L^2}.
\end{align}
It remains to estimate
\begin{align*}
\|(\partial_t^2 - \Delta)\Gamma^\alpha Y\|_{L^2}.
\end{align*}
In terms of Lemma \ref{SN-1}, we need to treat
\begin{align*}
\Pi(\alpha,2) =
& \sum_{\beta + \gamma = \alpha,\
  \gamma \neq \alpha}\big\||\nabla\Gamma^\beta Y||(\partial_t^2
   - \Delta)\Gamma^\gamma Y|\big\|_{L^2}\\\nonumber
&+\ \sum_{\beta + \gamma = \alpha,\
  |\beta| \geq |\gamma|}\Pi_1 + \sum_{\beta + \gamma = \alpha,\
  |\beta| < |\gamma|}\Pi_2.
\end{align*}
As in \eqref{G1} and \eqref{G2}, one has
\begin{equation*}
\mathcal{E}_{\kappa}^{\frac12}\sum_{\beta + \gamma = \alpha,\
  \gamma \neq \alpha}\big\||\nabla\Gamma^\beta Y||(\partial_t^2
   - \Delta)\Gamma^\gamma Y|\big\|_{L^2}
\lesssim\langle t\rangle^{-\frac54}\mathcal{E}_{\kappa}^{2}.
\end{equation*}
Consequently, \eqref{G4} are updated as follows:
\begin{align*} 
&\mathcal{E}_{\kappa }^{\frac{1}{2}}
\|\nabla  Y\|_{L^\infty}
\|(\partial_t^2 - \Delta)\Gamma^\alpha Y\|_{L^2}\\\nonumber
&\lesssim \langle t\rangle^{-\frac74}\mathcal{E}_{\kappa}^{\frac52}
+\langle t\rangle^{-\frac12}\mathcal{E}_{\kappa}
\big(\sum_{\beta + \gamma = \alpha,\
  |\beta| \geq |\gamma|}\Pi_1 + \sum_{\beta + \gamma = \alpha,\
  |\beta| < |\gamma|}\Pi_2
\big).
\end{align*}
Hence we are left to  treat $\Pi_1$ and $\Pi_2$. Due to their similarity, we only write down the details for the former one:
\begin{equation}\label{G6}
\sum_{\beta + \gamma =\alpha,\ |\beta|\geq |\gamma|}\Pi_1
\leq
\sum_{\beta + \gamma =\alpha,\ |\beta|\geq |\gamma|}
\|(\partial_t\Gamma^\beta Y_j\partial_t\nabla_i^\perp\Gamma^\gamma Y_j^\perp
 -\nabla_k\Gamma^\beta Y_j\nabla_k\nabla_i^\perp\Gamma^\gamma Y_j^\perp)\|_{L^2}.
\end{equation}
Firstly, we consider the integral domain away from the light cone.
Recall $\kappa\geq 7$, $|\alpha|\leq \kappa-1$, $\beta + \gamma =\alpha$,\ $|\beta|\geq |\gamma|$. Therefore $|\gamma|\leq [\alpha/2]\leq \kappa-4.$
Employing the cutoff function $\varphi^t$ introduced in Lemma \ref{SN-2},
by Lemma \ref{DecayES} and Lemma \ref{WE-2}, there holds
\begin{align*}
&\|(1-\varphi^t)(\partial_t\Gamma^\beta Y_j\partial_t\nabla_i^\perp\Gamma^\gamma Y_j^\perp
 -\nabla_k\Gamma^\beta Y_j\nabla_k\nabla_i^\perp\Gamma^\gamma Y_j^\perp)\|_{L^2} \\[-4mm]\\\nonumber
&\lesssim \|(1-\varphi^t) \partial \Gamma^\beta Y \partial \nabla \Gamma^\gamma Y \|_{L^2} \\[-4mm]\\\nonumber
&\lesssim \langle t\rangle^{-1}\mathcal{E}_{|\beta|+1}^{\frac12} \mathcal{X}_{|\gamma|+4}^{\frac12}
\lesssim \langle t\rangle^{-1}\mathcal{E}_{\kappa}.
\end{align*}
Next we study the case when the integral domain is near the light cone:
\begin{equation}\label{G7}
\|\varphi^t(\partial_t\Gamma^\beta Y_j\partial_t\nabla_i^\perp\Gamma^\gamma Y_j^\perp
 -\nabla_k\Gamma^\beta Y_j\nabla_k\nabla_i^\perp\Gamma^\gamma Y_j^\perp)\|_{L^2}.
 \end{equation}
In order to squeeze extra decay in time, we employ the div-curl decomposition:
\begin{align*}
&Y=-\nabla(-\Delta)^{-1}\nabla\cdot Y-\nabla^\perp(-\Delta)^{-1}\nabla^\perp\cdot Y ,\\
&Y^\perp=-\nabla^\perp (-\Delta)^{-1}\nabla\cdot Y+\nabla(-\Delta)^{-1}\nabla^\perp\cdot Y.
\end{align*}
Here we use the div-curl decomposition on each $\Gamma^\beta Y$ and $\Gamma^\gamma Y$
in \eqref{G7}:
\begin{align*}
&\partial_t\Gamma^\beta Y_j \partial_t\nabla_i^\perp\Gamma^\gamma Y_j^\perp
 -\nabla_k\Gamma^\beta Y_j\nabla_k\nabla_i^\perp\Gamma^\gamma Y_j^\perp \\\nonumber
&=(\partial_t\nabla_j(-\Delta)^{-1}\nabla\cdot \Gamma^\beta Y
+\partial_t\nabla_j^\perp(-\Delta)^{-1}\nabla^\perp\cdot \Gamma^\beta Y) \\\nonumber
&\quad\cdot(\partial_t\nabla_i^\perp\nabla_j^\perp(-\Delta)^{-1}\nabla\cdot \Gamma^\gamma Y
-\partial_t\nabla_i^\perp\nabla_j(-\Delta)^{-1}\nabla^\perp\cdot \Gamma^\gamma Y)
\\\nonumber
&\quad-(\partial_k\nabla_j(-\Delta)^{-1}\nabla\cdot \Gamma^\beta Y
+\partial_k\nabla_j^\perp(-\Delta)^{-1}\nabla^\perp\cdot \Gamma^\beta Y) \\\nonumber
&\qquad\cdot(\partial_k\nabla_i^\perp\nabla_j^\perp(-\Delta)^{-1}\nabla\cdot \Gamma^\gamma Y
-\partial_k\nabla_i^\perp\nabla_j(-\Delta)^{-1}\nabla^\perp\cdot \Gamma^\gamma Y)\\\nonumber
&=\partial_t\Gamma^\beta Y_j\cdot\partial_t\nabla_i^\perp\nabla_j^\perp(-\Delta)^{-1}\nabla\cdot \Gamma^\gamma Y\\\nonumber
&\quad-\partial_t\nabla_j(-\Delta)^{-1}\nabla\cdot \Gamma^\beta Y
\cdot\partial_t\nabla_i^\perp\nabla_j(-\Delta)^{-1}\nabla^\perp\cdot \Gamma^\gamma Y\\\nonumber
&\quad-\partial_t\nabla_j^\perp(-\Delta)^{-1}\nabla^\perp\cdot \Gamma^\beta Y
\cdot\partial_t\nabla_i^\perp\nabla_j(-\Delta)^{-1}\nabla^\perp\cdot \Gamma^\gamma Y\\\nonumber
&\quad-\partial_k\Gamma^\beta Y_j\cdot\partial_k\nabla_i^\perp\nabla_j^\perp(-\Delta)^{-1}\nabla\cdot \Gamma^\gamma Y\\\nonumber
&\quad+\partial_k\nabla_j(-\Delta)^{-1}\nabla\cdot \Gamma^\beta Y
\cdot\partial_k\nabla_i^\perp\nabla_j(-\Delta)^{-1}\nabla^\perp\cdot \Gamma^\gamma Y\\\nonumber
&\quad+\partial_k\nabla_j^\perp(-\Delta)^{-1}\nabla^\perp\cdot \Gamma^\beta Y
\cdot\partial_k\nabla_i^\perp\nabla_j(-\Delta)^{-1}\nabla^\perp\cdot \Gamma^\gamma Y.
\end{align*}
Consequently, \eqref{G7} can be bounded as follows
\begin{align}\label{G8}
&\|\varphi^t(\partial_t\Gamma^\beta Y_j\partial_t\nabla_i^\perp\Gamma^\gamma Y_j^\perp
 -\nabla_k\Gamma^\beta Y_j\nabla_k\nabla_i^\perp\Gamma^\gamma Y_j^\perp)\|_{L^2} \\[-4mm]\nonumber \\ \nonumber
&\lesssim \big\| |\partial
\Gamma^\beta Y|\cdot
 |\partial\nabla^2(-\Delta)^{-1}\nabla\cdot \Gamma^\gamma Y|
\big\|_{L^2}\\[-4mm]\nonumber\\ \nonumber
&\quad +\big\||\partial\nabla(-\Delta)^{-1}\nabla\cdot \Gamma^\beta Y|\cdot
 |\partial\nabla^2(-\Delta)^{-1}\nabla^\perp\cdot \Gamma^\gamma Y|
\big\|_{L^2}\\[-4mm]\nonumber\\ \nonumber
&\quad + \|\varphi^t\big(\partial_t \nabla^\perp_j(-\Delta)^{-1}\nabla^\perp\cdot \Gamma^\beta Y
\partial_t\nabla_i^\perp\nabla_j(-\Delta)^{-1}\nabla^\perp\cdot \Gamma^\gamma Y \\[-4mm]\nonumber\\\nonumber
&\qquad\quad-\nabla_k \nabla^\perp_j(-\Delta)^{-1}\nabla^\perp\cdot \Gamma^\beta Y
\nabla_k\nabla_i^\perp\nabla_j(-\Delta)^{-1}\nabla^\perp\cdot \Gamma^\gamma Y\big)\|_{L^2}.
\end{align}
The first two terms on the right hand side of \eqref{G8} become cubic terms.
By Lemma \ref{DivE}, they are controlled by
\begin{align*}
& \big\| |\partial \Gamma^\beta Y|\cdot
 |\partial\nabla^2(-\Delta)^{-1}\nabla\cdot \Gamma^\gamma Y|
\big\|_{L^2}\\[-4mm]\nonumber\\ \nonumber
&+\big\| |\partial\nabla(-\Delta)^{-1}\nabla\cdot \Gamma^\beta Y|\cdot
 |\partial\nabla^2(-\Delta)^{-1}\nabla^\perp\cdot \Gamma^\gamma Y|
\big\|_{L^2}\\[-4mm]\nonumber\\ \nonumber
&\lesssim
\| \partial \Gamma^\beta Y\|_{L^2}
\|\partial\nabla^2(-\Delta)^{-1}\nabla\cdot \Gamma^\gamma Y\|_{L^\infty}
\\[-4mm]\nonumber\\ \nonumber
&\quad+\| \partial\nabla(-\Delta)^{-1}\nabla\cdot \Gamma^\beta Y\|_{L^2}
 \|\partial\nabla^2(-\Delta)^{-1}\nabla^\perp\cdot \Gamma^\gamma Y\|_{L^\infty}\\ \nonumber
&\lesssim\langle t\rangle^{-1} \mathcal{E}_{\kappa}^{\frac32}+
\langle t\rangle^{-1} \mathcal{E}_{\kappa}^{\frac32} .
\end{align*}
For the last two lines in \eqref{G8},
we refer to the null structure and the ghost weight energy for the curl part.
To show the null structure, they are organized as follows
\begin{align}\label{G9}
&\partial_t \nabla^\perp_j(-\Delta)^{-1}\nabla^\perp\cdot \Gamma^\beta Y
\partial_t\nabla_i^\perp\nabla_j(-\Delta)^{-1}\nabla^\perp\cdot \Gamma^\gamma Y \\\nonumber
&-\nabla_k \nabla^\perp_j(-\Delta)^{-1}\nabla^\perp\cdot \Gamma^\beta Y
\nabla_k\nabla_i^\perp\nabla_j(-\Delta)^{-1}\nabla^\perp\cdot \Gamma^\gamma Y \\\nonumber
&=(\omega_k(\omega_k\partial_t+\partial_k))\nabla^\perp_j(-\Delta)^{-1}\nabla^\perp\cdot \Gamma^\beta Y
\partial_t\nabla_i^\perp\nabla_j(-\Delta)^{-1}\nabla^\perp\cdot \Gamma^\gamma Y\\\nonumber
&\quad-\nabla_k\nabla^\perp_j(-\Delta)^{-1}\nabla^\perp\cdot \Gamma^\beta Y
(\omega_k\partial_t+\partial_k)\nabla_i^\perp\nabla_j(-\Delta)^{-1}\nabla^\perp\cdot \Gamma^\gamma Y.
\end{align}
For the first line on the right hand side of \eqref{G9}, it is estimated by
\begin{align*}
&\| \varphi^t (\omega_k(\omega_k\partial_t+\partial_k))\nabla^\perp_j(-\Delta)^{-1}\nabla^\perp\cdot \Gamma^\beta Y
\partial_t\nabla_i^\perp\nabla_j(-\Delta)^{-1}\nabla^\perp\cdot \Gamma^\gamma Y\|_{L^2}\\\nonumber
&\leq
\big\| \frac{(\omega_k\partial_t+\partial_k)\nabla^\perp_j(-\Delta)^{-1}\nabla^\perp\cdot \Gamma^\beta Y}
{\langle t-r\rangle}
\big\|_{L^2}
\|\varphi^t\langle t-r\rangle\partial_t\nabla_i^\perp\nabla_j(-\Delta)^{-1}\nabla^\perp\cdot \Gamma^\gamma Y\|_{L^\infty}.
\end{align*}
By the second inequality in Lemma \ref{DecayE}, there holds
\begin{align}\label{G10}
&\|\varphi^t\langle t-r\rangle\partial_t\nabla_i^\perp\nabla_j(-\Delta)^{-1}\nabla^\perp\cdot \Gamma^\gamma Y\|_{L^\infty}\\\nonumber
&\lesssim \sum_{a=0, 1}\sum_{ b=0,1}\langle t\rangle^{-\frac12 }
\|\langle t-r\rangle \nabla^a\Omega^b
\partial_t\nabla_i^\perp\nabla_j(-\Delta)^{-1}\nabla^\perp\cdot \Gamma^\gamma Y)\|_{L^2}.
\end{align}
We need to treat the commutators of rotational operator and zero-order differential operators.
By Lemma \ref{Commu} and Lemma \ref{WE-3}, we have
\begin{align*}
&\langle t\rangle^{-\frac12 }\|\langle t-r\rangle\nabla^a\Omega^b\partial_t\nabla_i^\perp\nabla_j(-\Delta)^{-1}\nabla^\perp\cdot \Gamma^\gamma Y\|_{L^2}\\\nonumber
&\lesssim
\langle t\rangle^{-\frac12 }\|\langle t-r\rangle\nabla^a\partial_t\nabla_i^\perp\nabla_j(-\Delta)^{-1}\nabla^\perp\cdot \Omega^b\Gamma^\gamma Y\|_{L^2}\\\nonumber
&\quad+\langle t\rangle^{-\frac12 }\|\langle t-r\rangle\nabla^a\partial_t\nabla (P(\nabla))^b\Gamma^\gamma Y\|_{L^2}
+\langle t\rangle^{-\frac12}\mathcal{E}_{\kappa}^{\frac12}.
\end{align*}
Hence by Lemma \ref{WE-3}, \eqref{G10} can be updated as follows
\begin{align*}
\|\varphi^t\langle t-r\rangle\partial_t\nabla_i^\perp\nabla_j(-\Delta)^{-1}\nabla^\perp\cdot \Gamma^\gamma Y\|_{L^\infty}
\lesssim  \langle t\rangle^{-\frac12}\mathcal{E}_{\kappa}^{\frac12}.
\end{align*}

For the second line on the right hand side of \eqref{G9}, by Lemma \ref{GoodDeri}, Lemma \ref{Commu}
and Lemma \ref{WE-3}, they will be estimated by
\begin{align*}
&\| \varphi^t \nabla_k\nabla^\perp_j(-\Delta)^{-1}\nabla^\perp\cdot \Gamma^\beta Y
(\omega_k\partial_t+\partial_k)\nabla_i^\perp\nabla_j(-\Delta)^{-1}\nabla^\perp\cdot \Gamma^\gamma Y\|_{L^2}\\\nonumber
&\leq
\|  \nabla_k\nabla^\perp_j(-\Delta)^{-1}\nabla^\perp\cdot \Gamma^\beta Y\|_{L^2}
\|\varphi^t (\omega_k\partial_t+\partial_k)\nabla_i^\perp\nabla_j(-\Delta)^{-1}\nabla^\perp\cdot \Gamma^\gamma Y\|_{L^\infty}\\\nonumber
&\lesssim \mathcal{E}_\kappa^{\frac12}
\|\varphi^t (\omega_k\partial_t+\partial_k)\nabla_i^\perp\nabla_j(-\Delta)^{-1}\nabla^\perp\cdot \Gamma^\gamma Y\|_{L^\infty}\\\nonumber
&\lesssim \langle t\rangle^{-1}\mathcal{E}_\kappa^{\frac12}
\|\varphi^t
(|\partial\nabla_j(-\Delta)^{-1}\nabla^\perp\cdot \Gamma^\gamma Y|
+|\partial\Gamma\nabla_j(-\Delta)^{-1}\nabla^\perp\cdot \Gamma^\gamma Y|)
\|_{L^\infty}\\\nonumber
&\quad+ \mathcal{E}_\kappa^{\frac12}
\|\varphi^t (\partial_t^2-\Delta)\nabla_j(-\Delta)^{-1}\nabla^\perp\cdot \Gamma^\gamma Y\|_{L^\infty}\\\nonumber
&\lesssim \langle t\rangle^{-1}\mathcal{E}_\kappa.
\end{align*}
Therefore, \eqref{G6} can be updated as follows
\begin{align*}
\sum_{\beta + \gamma =\alpha,\ |\beta|\geq |\gamma|}\Pi_1
\lesssim \langle t\rangle^{-1}\mathcal{E}_{\kappa}
+\langle t\rangle^{-\frac12}\mathcal{E}_{\kappa}^{\frac12}
\cdot \sum_{|\alpha|\leq \kappa-1}\sum_k\big\|\frac{(\omega_k\partial_t+\partial_k)\nabla(- \Delta)^{-1}\nabla^\perp\cdot\Gamma^\alpha Y}
{\langle t-r\rangle} \big\|_{L^2}.
\end{align*}
Note that the same estimate also holds for
$$\sum_{\beta + \gamma =\alpha,\ |\beta|< |\gamma|}\Pi_2.$$

Finally we collect all the estimates in this section to arrive at
\begin{align*}
&\frac{1}{2}\frac{d}{dt}\int\big|\partial\nabla(- \Delta)^{-1}\nabla^\perp\cdot\Gamma^\alpha Y\big|^2
  e^{- q(\sigma)} dy\\\nonumber
&+\frac{1}{2}\sum_k\int
\frac{\big|(\omega_k\partial_t+\partial_k)\nabla(- \Delta)^{-1}\nabla^\perp\cdot\Gamma^\alpha Y\big|^2}
{\langle t-r\rangle^2}e^{- q(\sigma)} dy\\\nonumber
&\lesssim  \langle t\rangle^{- 5/4}\mathcal{E}_{\kappa}^{2}
 +\langle t\rangle^{-1} \mathcal{E}_{\kappa}
 \sum_{|\alpha|\leq \kappa-1}\sum_k \big\|\frac{(\omega_k\partial_t+\partial_k)\nabla(- \Delta)^{-1}\nabla^\perp\cdot\Gamma^\alpha Y}
{\langle t-r\rangle} \big\|_{L^2} \\\nonumber
&\lesssim
\langle t\rangle^{- 5/4}\mathcal{E}_{\kappa}^{2}
+\langle t\rangle^{- 2}\mathcal{E}_{\kappa}^{2}
 +\delta \sum_{|\alpha|\leq \kappa-1}\sum_k\big\|\frac{(\omega_k\partial_t+\partial_k)\nabla(- \Delta)^{-1}\nabla^\perp\cdot\Gamma^\alpha Y}
{\langle t-r\rangle} \big\|^2_{L^2}
\end{align*}
for any positive $\delta>0$.
Summing over all $|\alpha|\leq \kappa-1$. Then choosing sufficiently small $\delta$, the last term will be absorbed by the left hand side, thus
we arrive at
\begin{equation}\label{G11}
\frac{d}{dt} \sum_{|\alpha|\leq \kappa-1}
\int\big|\partial\nabla(- \Delta)^{-1}\nabla^\perp\cdot\Gamma^\alpha Y\big|^2
  e^{- q(\sigma)} dy
\lesssim \langle t\rangle^{-\frac54}\mathcal{E}_{\kappa}^{2}.
\end{equation}
Then we can replace all $\mathcal{E}_{\kappa}$ appeared
throughout this paper by $\big\|\partial\nabla(- \Delta)^{-1}\nabla^\perp\cdot\Gamma^{\leq \kappa-1} Y\big\|_{L^2}^2$
without changing the final result. Then \eqref{G11} gives the
differential inequality \eqref{apriori} which would finally imply the result.
\end{proof}

\section*{Acknowledgement.}

The author would like to thank Professor Zhen Lei at Fudan University for many stimulating discussions.
The work was partially supported by Hong Kong RGC Grant GRF 16308820.


\end{document}